\documentclass{amsart}
\usepackage[utf8]{inputenc}
\usepackage{amsfonts}
\usepackage{amsmath}
\numberwithin{equation}{section}
\usepackage{amssymb}
\usepackage{float}
\usepackage{tikz-cd}
\usepackage{amsthm}
\usepackage{changes}
\usepackage{enumerate}
\usepackage{tikz}
\usepackage{graphicx}
\usepackage{hyperref}
\usepackage[capitalise]{cleveref}
\usepackage{slashed}  % For letters with a slash
\usepackage{upgreek}
\usepackage[top=3cm,bottom=3cm,left=2.5cm,right=2.5cm,headsep=10pt,letterpaper]{geometry} % Page margins
\usepackage{fancyhdr}
\usepackage{mathrsfs}
\usepackage{dsfont}

\usepackage[
    backend=biber,
    style=alphabetic,
    sorting=nyt
]{biblatex}
\numberwithin{equation}{section}
\theoremstyle{plain}
\newtheorem*{conjecture}{Conjecture}

\newtheorem{theorem}{Theorem}[section]
\newtheorem{proposition}[theorem]{Proposition}
\newtheorem{lemma}[theorem]{Lemma}
\newtheorem{corollary}[theorem]{Corollary}

\theoremstyle{definition}
\newtheorem{definition}[theorem]{Definition}

\theoremstyle{remark}
\newtheorem{remark}[theorem]{Remark}

\theoremstyle{plain}

\newcommand{\R}{\mathbb{R}}

\newcommand{\grad}{\nabla}
\newcommand{\RR}{\mathbb{R}}

\DeclareMathOperator{\Ric}{Ric}

\newcommand{\CP}{\mathbb{CP}}

\newcommand{\Hess}{\operatorname{Hess}}
\newcommand{\II}{\mathrm{I\!I}}

\newcommand{\Id}{\operatorname{Id}}

\DeclareMathOperator{\tr}{tr}

\title{Log-Concavity of First Dirichlet Eigenfunctions on \(\CP^2\)}

\author{Yusen Xia}
\address{Department of Mathematics, University of California, Santa Barbara, CA 93106, USA}
\email{yusen@ucsb.edu}
\thanks{Y. Xia is partially supported by NSF DMS 2403557.}

\begin{document}

\begin{abstract}
We study the log-concavity property of first Dirichlet eigenfunctions on domains
in $\CP^2$. For every smooth $1$-convex
domain $\Omega\subset\CP^2$, we prove the quantitative estimate
\[
\nabla^2(-\log u)
>
\max\left\{\psi(s),\frac85\right\}g,
\text{ where }
\psi(|\grad f|^2)
=
\frac{s}{\sqrt{1+s}}-\log(1+s),
\]
for its first Dirichlet eigenfunction $u$. In particular, $u$ is
strictly log-concave.

As consequences, we obtain a uniform convexity estimate for the regular
level sets of $u$ and the fundamental gap bound
$\lambda_2-\lambda_1>46/5$.
\end{abstract}

\maketitle

\tableofcontents

\section{Introduction and main results}

Let $\Omega$ be a bounded, smooth, connected domain in a Riemannian
manifold $(M,g)$, and let $u>0$ be its first Dirichlet eigenfunction, i.e.
\[
-\Delta u=\lambda_1u
\quad\text{in }\Omega,
\qquad
u=0
\quad\text{on }\partial\Omega.
\]
A fundamental question in spectral geometry is whether $u$ is log-concave, namely whether
\[
\Hess(\log u)\le0.
\]

In this paper we study this problem on complex projective space.
The corresponding question on $\CP^n$ was raised by
Dai--Seto--Wei \cite{DaiSetoWei2019}, where the following conjecture
was attributed to Zhiqin Lu.

\begin{conjecture}[Log-concavity conjecture in $\CP^n$]
Let $\Omega\subset\CP^n$ be a convex domain. Then the first Dirichlet
eigenfunction of $\Omega$ is log-concave.
\end{conjecture}

We consider the conjecture in complex dimension two, with $\CP^2$
equipped with the Fubini--Study metric normalized so that the holomorphic
sectional curvature is $4$. In contrast with earlier results in the space forms
$\RR^n$, $\mathbb S^n$, and $\mathbb H^n$ (see \cref{subsec:abridged history} for a brief history), the sectional curvature of
$\CP^2$ is not constant but ranges from $1$ to $4$. Geometrically, the
curvature operator distinguishes the direction $JX$ from directions
orthogonal to the complex line spanned by $X$ and $JX$. This anisotropy
creates the main obstruction to the standard Hessian maximum-principle
arguments.

We obtain an affirmative result toward the conjecture under a stronger convexity assumption of the domain. To state the result, we use the following quantitative notion of boundary convexity (cf. \cite[Definition~2]{WeiXiao2025}).

\begin{definition}
We say that a smooth domain $\Omega$ is $\kappa$-convex if, with respect
to the inward unit normal,
\[
\II_{\partial\Omega}\ge\kappa g|_{T_{\partial\Omega}}
\] where we follow the convention $\II_{\partial \Omega}(V,W)
=-\langle\nabla_V\nu,W\rangle.$
\end{definition}
In hyperbolic space, $1$-convexity coincides with horo-convexity for smooth domains.

We introduce the following barrier function to quantify convexity of $-\log u$:
\[
\psi(s)
:=
\frac{s}{\sqrt{1+s}}-\log(1+s),
\quad \forall  s\ge0.
\]

\begin{theorem}\label{thm: main}
Let $\Omega\subset\CP^2$ be a connected $1$-convex domain with smooth
boundary, and let $u>0$ be its first Dirichlet eigenfunction. Denote $f:=-\log u.$
Then we have a gradient-dependent bound of $\grad^2 f$
\begin{equation}\label{eq:main-gradient-bound}
\nabla^2 f>\psi(|\grad f|^2)g,
\end{equation}
and a uniform bound
\begin{equation}\label{eq:main-uniform-bound}
\nabla^2 f>\frac85 g.
\end{equation}
Equivalently,
\[
\nabla^2 f>
\max\left\{\psi(|\grad f|^2),\frac85\right\}g.
\]
In particular, $u$ is strictly log-concave.
\end{theorem}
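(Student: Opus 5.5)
We argue by the two-point-free, one-point maximum principle for the Hessian of $f=-\log u$, in the spirit of the arguments on space forms, adapted to the anisotropic curvature of $\CP^2$.

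\textbf{Setup.} Writing $u=e^{-f}$, the eigenfunction equation becomes
\[
\Delta f=|\nabla f|^2+\lambda_1 .
\]
Since $u\to0$ at $\partial\Omega$, we have $f\to+\infty$ and $|\nabla f|\to\infty$ there. For a barrier $\phi$, which will be either $\psi(|\nabla f|^2)$ or the constant $8/5$, consider
\[
Z(x):=\min_{|X|=1}\big(\nabla^2 f(X,X)-\phi(x)\big).
\]
The goal is $Z>0$ on $\Omega$.

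\textbf{Boundary step.} Near $\partial\Omega$ we use the Hopf lemma and $u=0$ to get $\nabla f\approx -\nu/u$. A boundary expansion then gives
\[
\nabla^2 f=\frac{\nabla u\otimes\nabla u}{u^2}-\frac{\nabla^2 u}{u}.
\]
In the normal direction this is of order $u^{-2}$. In tangential directions it is $\II(V,V)\,|\nabla u|/u\ge |\nabla u|/u$, which uses $1$-convexity. Since $\psi(s)\le \sqrt s$ grows more slowly than $\sqrt{s}\sim|\nabla u|/u$ only up to a bounded term ($\psi(s)=\sqrt s-\log s+O(1)$ as $s\to\infty$), the tangential margin $\sqrt s-\psi(s)\sim\log s\to\infty$ gives $Z>0$ near $\partial\Omega$. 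The mixed terms are of lower order $O(1/u)$ and are absorbed by the normal term.

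This is exactly where the specific $\psi$, with $\psi(s)<\sqrt s$, is dictated. The $1$-convexity is precisely what makes the tangential bound match $\sqrt s$ to leading order.

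\textbf{Interior step.} Suppose $Z$ attains a nonpositive minimum at an interior point $x_0$ in a unit direction $e_1$. Parallel-extend $e_1$ and set $w=f_{11}-\phi$. At $x_0$ we have $\nabla w=0$ and $\Delta w\ge0$. Commuting derivatives via the Ricci identity, and using $\Ric=6g$ on $\CP^2$ (holomorphic sectional curvature $4$), we obtain
\[
\Delta f_{11}=(\Delta f)_{11}+2R\text{-terms}\cdot\nabla^2 f+(\nabla\Ric)\text{-terms}.
\]
Here $\nabla R=0$, so only the curvature-times-Hessian terms appear, namely $2\sum_k R_{1k1k}(f_{kk}-f_{11})$ in a diagonalizing frame, together with $\Ric(\nabla f,\cdot)$ contributions. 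Also $(\Delta f)_{11}=2\sum_k f_{k1}^2+2f_kf_{k11}$.

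Now use $\nabla w=0$ to replace $f_{11k}$ by $\phi_k=2\phi'\,f_kf_{kk}$, and expand $\Delta\phi$ using the equation. The curvature term $R_{1k1k}\in[1,4]$ with $R(e_1,Je_1,e_1,Je_1)=4$ multiplies $f_{kk}-f_{11}\ge0$, so it has a favorable sign, since $e_1$ is the minimal direction. The ODE inequality for $\phi$ must then force $\Delta w<0$, which is a contradiction.

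For $\phi=\psi(s)$ we arrive at a scalar differential inequality of the form
\[
2\psi^2+(\text{curvature gain})-2\psi' s\,(\cdots)-4\psi''s^2(\cdots)>0 .
\]
The choice $\psi(s)=s/\sqrt{1+s}-\log(1+s)$ is tuned so that this inequality holds for all $s\ge0$. The constant barrier $8/5$ is handled the same way: with $\phi'=0$, the inequality reduces to an algebraic condition on $f_{11}\le 8/5$, the gradient, and the curvature $\Ric=6$, giving positivity.

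\textbf{Main obstacle.} The difficult point is the anisotropy. The gradient cross-term $2f_kf_{k11}$ and the curvature term $\Ric(\nabla f,\nabla f)$-type pieces depend on the angle between $\nabla f$, $e_1$, and $Je_1$. To control them I would decompose $\nabla f=a e_1+bJe_1+c\,\xi$ with $\xi\perp\{e_1,Je_1\}$. I would then minimize the resulting quadratic form in $(a,b,c)$ subject to $a^2+b^2+c^2=s$, using $R_{1k1k}\ge1$ in the worst direction. Verifying that $\psi$ satisfies the resulting one-variable inequality for all $s\ge0$, and that the constant $8/5$ survives the worst-case angle, is the computational heart of the argument and where I expect most of the difficulty.

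Strictness then follows because equality at an interior minimum is excluded by the strict inequality.
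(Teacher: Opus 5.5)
Your boundary step is essentially the paper's boundary lemma (Lemma~3.2). The interior step, however, has a genuine gap, in two places.

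\textbf{No continuity argument.} You run a one-point minimum principle on the fixed domain. At an interior minimum of $Z$ with value $m\le0$ you only know $\nabla^2 f\ge(\phi+m)g$, so the effective barrier is $b=\psi(s)+m$. The null-direction computation then gives
\[
\tfrac12\mathscr L w=b^2+4b-\lambda-a^2-3B(JX,JX)+3q^2-\tfrac12\mathscr L\psi,
\qquad a=df(X),\quad q=df(JX).
\]
Nothing bounds $m$ from below, and for negative $m$ the term $b^2+4b$ is unfavorable. At a critical point of $u$ the right side is $m^2+4m-\lambda-3B(JX,JX)$, which is positive for $m\ll0$ and $B(JX,JX)$ small. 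Already in $\RR^n$ with a constant barrier one gets $\Delta f_{11}=2f_{11}^2\ge0$ at such a minimum, so this naive principle cannot give log-concavity. The paper instead uses a continuity method:
\begin{itemize}
\item mean curvature flow preserves $1$-convexity;
\item near extinction the flow produces a small nearly round domain with $K>0$;
\item reversing the flow, with $K_t>0$ on a uniform boundary collar, one reaches a first time $t_*$ with $K_{t_*}\ge0$ and an interior null vector.
\end{itemize}
Only then are $b=\psi\ge0$ and $B=K|_{X^\perp}\ge0$ available.

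\textbf{The scalar inequality is too weak.} Even at a genuine first contact, parallel-extending $e_1$ and using only $\Delta w\ge0$ cannot absorb the anisotropic term $+3q^2$ from the curvature-$4$ direction $JX$, because $B(JX,JX)$ may be small. The missing idea is the determinant test. After proving $\ker K$ is one-dimensional, $\Delta\det K\ge0$ contributes the favorable term $-\langle B^{-1}C_X,C_X\rangle$, where
\[
C_X=(R_X|_{X^\perp}+2\psi\psi' I+2\psi' B)(qJX+Z).
\]
Since $\langle R_XY,JX\rangle=4q$ for $Y=qJX+Z$, Cauchy--Schwarz and AM--GM against $3B(JX,JX)$ turn $3q^2$ into $3q^2-8\sqrt3|q|$ for a constant barrier.

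This is decisive for the bound $8/5$, which also needs two inputs you never supply. The first is $\lambda_1>16$, which comes from $1$-convexity via a Riccati comparison in the $J\nu$ direction and Barta's inequality; you use $1$-convexity only at the boundary and never bound $\lambda$. The second is the already proven estimate $\nabla^2f>\psi(s)g$, which at a point with least eigenvalue $\eta\le 8/5$ forces $s<25$, hence $|q|<5$. Even with both, $\eta^2+4\eta-\lambda+3q^2$ need not be negative (it can approach $9-16+75$), whereas $\eta^2+4\eta-\lambda+3q^2-8\sqrt3|q|<0$ does hold.

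For the barrier $\psi$ with $s>8$, $\psi''<0$ makes $-2\psi''|H(\nabla f,\cdot)|^2$ unfavorable. The resulting inequality is then not a one-variable inequality in $s$: it depends on $\lambda$ and on the block $B$. The paper handles it in two steps. It minimizes over symmetric $B$ with $\tr B=\lambda+s-4\psi$. It then uses concavity in $(\tau,x)$ to reduce to the three orientations $\nabla f\parallel X$, $\nabla f\perp\{X,JX\}$, and $\nabla f\parallel JX$.
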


The two lower bounds in \cref{thm: main} are complementary. The uniform
bound $8/5$ is strongest where $|\nabla f|$ is small, including at critical
points of $u$, while the gradient-dependent bound $\psi(s)$ grows like
$\sqrt{s}$ as $s\to\infty$ and is therefore much stronger near the boundary.

The quantitative Hessian estimates have both geometric and spectral
consequences.

\begin{corollary}[Uniform convexity of level sets]
\label{cor:level-set-convexity}
Let $\Omega\subset\CP^2$ be a connected $1$-convex domain with smooth
boundary, and let $u>0$ be its first Dirichlet eigenfunction. Then every
regular level set $\Sigma_c=\{u=c\}$ satisfies
\[
    \II_{\Sigma_c}
    >
    \frac{8}{25}g|_{T\Sigma_c}.
\]
In particular, every regular superlevel set of $u$ is uniformly convex.
\end{corollary}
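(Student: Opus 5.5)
Since $u=e^{-f}$, the level set $\Sigma_c=\{u=c\}$ coincides with the level set $\{f=-\log c\}$. At a regular point $\nabla f=-\nabla u/u\neq0$. The superlevel set $\{u>c\}=\{f<-\log c\}$ lies on the side toward which $-\nabla f$ points, so the inward unit normal to $\Sigma_c$ (pointing into the superlevel set) is
\[
\nu=-\frac{\nabla f}{|\nabla f|}.
\]
I will use the standard formula for the second fundamental form of a level set. For $V,W\in T\Sigma_c$ we have $\langle \nabla f, W\rangle=0$ along $\Sigma_c$. Differentiating in the direction $V$ gives
\[
\langle \nabla_V\nabla f,W\rangle=-\langle \nabla f,\nabla_VW\rangle .
\]
With the paper's convention,
\[
\II_{\Sigma_c}(V,W)=-\langle\nabla_V\nu,W\rangle
=\frac{\langle\nabla_V\nabla f,W\rangle}{|\nabla f|},
\]
because the derivative of $|\nabla f|^{-1}$ produces a multiple of $\nabla f$, which is orthogonal to $W$. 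Hence
\[
\II_{\Sigma_c}=\frac{\nabla^2 f|_{T\Sigma_c}}{|\nabla f|}.
\]
This has the correct sign: for a ball in $\RR^n$ with $f$ convex, the superlevel sets of $u$ are convex and $\II>0$.

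Apply \cref{thm: main} and write $s=|\nabla f|^2>0$. This gives
\[
\II_{\Sigma_c}>\frac{\max\{\psi(s),8/5\}}{\sqrt s}\,g|_{T\Sigma_c}.
\]
It remains to show the one-variable bound
\[
h(s):=\frac{\max\{\psi(s),8/5\}}{\sqrt s}\ge \frac{8}{25}\qquad\text{for all } s>0.
\]
I split into two cases.

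\emph{Case $s\le 25$.} Here $\frac{8/5}{\sqrt s}\ge \frac{8/5}{5}=\frac{8}{25}$.

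\emph{Case $s\ge 25$.} Here it suffices to show $\psi(s)\ge \frac{8}{25}\sqrt s$. Put $t=\sqrt{1+s}\ge\sqrt{26}$. Then
\[
\psi=\frac{t^2-1}{t}-2\log t = t-\frac1t-2\log t,
\]
and $\sqrt s<t$. It is therefore enough to prove
\[
\phi(t):=\frac{17}{25}t-\frac1t-2\log t\ge 0\qquad\text{for } t\ge\sqrt{26}.
\]
Its derivative is $\phi'(t)=\frac{17}{25}+t^{-2}-\frac2t$, which is positive once $t>2\cdot 25/17\approx 2.94$. So $\phi$ is increasing on this range, and it suffices to check the left endpoint. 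Using $\sqrt{26}\approx 5.099$, we get $\phi(\sqrt{26})\approx 3.467-0.196-3.258=0.013>0$.

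This endpoint inequality is the main obstacle: the margin is very small, so the constant $8/25$ is essentially tight for this argument. It needs a careful numerical check. One rigorous route uses the bounds $5.099<\sqrt{26}<5.1$ and $\log\sqrt{26}=\tfrac12\log 26<1.6291$. These give
\[
\phi>\tfrac{17}{25}(5.099)-\tfrac1{5.099}-3.2582>3.4673-0.1962-3.2582>0 .
\]
Since the inequality from the theorem is strict, we get $\II_{\Sigma_c}>\frac{8}{25}g|_{T\Sigma_c}$ at every point of every regular level set.

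For uniform convexity of a regular superlevel set $\{u\ge c\}$: its boundary is $\Sigma_c$, which has $\II>\frac{8}{25}$ with respect to the inward normal. This is exactly the uniform convexity claimed.
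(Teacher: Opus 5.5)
Your proof is correct and has the same structure as the paper's: the formula $\II_{\Sigma_c}=\nabla^2 f|_{T\Sigma_c}/\sqrt{s}$, a split at $s=25$, the uniform bound $8/5$ for $s\le 25$, and the gradient bound $\psi(s)$ for $s\ge 25$. The only difference is the scalar check for $s\ge 25$: the paper shows $\psi(s)/\sqrt{s}$ is increasing (from $\psi'+2s\psi''>0$) and reduces to the already-used fact $\psi(25)>8/5$, whereas you replace $\sqrt{s}$ by $\sqrt{1+s}$ and check your monotone function $\phi$ at $\sqrt{26}$, with a thinner but valid numerical margin.
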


\begin{corollary}[Fundamental gap estimate]
\label{cor:fundamental-gap}
Let $\Omega\subset\CP^2$ be a connected $1$-convex domain with smooth
boundary. Then its first two Dirichlet eigenvalues satisfy
\[
    \lambda_2-\lambda_1>\frac{46}{5}.
\]
\end{corollary}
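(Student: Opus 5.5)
The plan is to deduce the gap bound from the uniform Hessian estimate \eqref{eq:main-uniform-bound} of \cref{thm: main}, via the standard reduction of $\lambda_2-\lambda_1$ to a weighted Neumann problem and a Bakry--\'Emery form of the Lichnerowicz argument. Write $u=u_1>0$, $f=-\log u$, let $u_2$ be a second Dirichlet eigenfunction, and set $w:=u_2/u_1$. A direct computation gives
\[
Lw:=\Delta w-2\langle\nabla f,\nabla w\rangle=\Delta w+2\langle\nabla\log u,\nabla w\rangle=-(\lambda_2-\lambda_1)w \quad\text{in }\Omega .
\]
Both $u_1,u_2$ vanish on $\partial\Omega$ and $\partial_\nu u_1\neq0$ there (Hopf lemma). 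Hence $w$ extends smoothly to $\overline\Omega$, and $w$ satisfies $\partial_\nu w=0$ (standard boundary regularity for the quotient). The operator $L$ is symmetric with respect to $d\mu=u^2\,d\mathrm{vol}=e^{-2f}d\mathrm{vol}$. Moreover $\int_\Omega w\,d\mu=\int u_1u_2=0$, so $w$ is a nonconstant eigenfunction of $-L$ with eigenvalue $\lambda:=\lambda_2-\lambda_1$.

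Next, I would apply the weighted Bochner formula
\[
\tfrac12 L|\nabla w|^2=|\nabla^2 w|^2+\langle\nabla w,\nabla Lw\rangle+\bigl(\Ric+2\nabla^2 f\bigr)(\nabla w,\nabla w),
\]
and integrate it against $d\mu$. The curvature input is that the Fubini--Study metric with holomorphic sectional curvature $4$ on $\CP^2$ is Einstein with $\Ric=6g$. Combined with \eqref{eq:main-uniform-bound}, this gives
\[
\Ric+2\nabla^2 f>\Bigl(6+\tfrac{16}{5}\Bigr)g=\tfrac{46}{5}g .
\]
After integrating, $\int\langle\nabla w,\nabla Lw\rangle\,d\mu=-\lambda\int|\nabla w|^2d\mu$, and the left side becomes a boundary term $\tfrac12\int_{\partial\Omega}\partial_\nu|\nabla w|^2\,u^2$. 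This term vanishes because the weight $u^2$ vanishes on $\partial\Omega$ while $w$ is smooth up to the boundary. So no second fundamental form term is even needed here; the $1$-convexity enters only through \cref{thm: main}. This yields
\[
\lambda\int_\Omega|\nabla w|^2\,d\mu\ \ge\ \int_\Omega|\nabla^2 w|^2\,d\mu+\int_\Omega(\Ric+2\nabla^2 f)(\nabla w,\nabla w)\,d\mu .
\]

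The main technical point is justifying these integrations by parts, since $\nabla^2 f$ blows up at $\partial\Omega$. The terms are nevertheless integrable: $u^2\nabla^2 f=-u\nabla^2u+\nabla u\otimes\nabla u$ and $u^2|\nabla f|^2=|\nabla u|^2$ are bounded. I would therefore integrate on the sub-domains $\{u>\varepsilon\}$ and let $\varepsilon\to0$. The boundary contributions carry a factor $u^2=\varepsilon^2$ times quantities bounded in terms of $|\nabla w|$, $|\nabla^2w|$ and $|\nabla f|\lesssim 1/u$, so they tend to $0$.

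Finally, strictness follows because $w$ is nonconstant, so $\nabla w\not\equiv0$ on an open set. On that set the pointwise strict inequality $\Ric+2\nabla^2f>\tfrac{46}{5}g$ gives
\[
\int(\Ric+2\nabla^2 f)(\nabla w,\nabla w)\,d\mu>\tfrac{46}{5}\int|\nabla w|^2d\mu,
\]
and hence $\lambda_2-\lambda_1>46/5$.
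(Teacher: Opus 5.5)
Your proposal is correct and follows essentially the same route as the paper: the Singer--Wong--Yau--Yau ground-state transform $w=u_2/u_1$, the weighted Bochner formula integrated against $u_1^2\,dV_g$, and the bound $\Ric+2\nabla^2 f>\frac{46}{5}g$, which comes from $\Ric=6g$ and the uniform estimate $\nabla^2 f>\frac85 g$. Your extra treatment of the boundary term only makes explicit what the paper leaves implicit: it vanishes because $u^2\nabla\bigl(\tfrac12|\nabla w|^2\bigr)$ is smooth up to $\partial\Omega$ and the weight vanishes there.
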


To the best of our knowledge, \cref{thm: main} provides the first affirmative
result toward the log-concavity conjecture on complex projective space
beyond the elementary case of geodesic balls. It yields a
strict quantitative improvement of ordinary log-concavity under the boundary condition of $1$-convexity. The gradient-dependent estimate obtained in this way also provides the large-gradient control needed to derive the uniform Hessian lower bound. As consequences, we obtain uniform convexity of the regular level sets and a universal lower bound for the Dirichlet fundamental gap within this class of domains. The proof adapts the first-contact framework of \cite{KNTW} to the anisotropic curvature of $\CP^2$, using a carefully chosen scalar barrier in the modified Hessian tensor together with a determinant estimate at a hypothetical interior contact point.

We next recall the development of the log-concavity problem and its
connection with fundamental-gap estimates.
\subsection{An abridged history of the log-concavity problem}\label{subsec:abridged history}
The study of log-concavity of first eigenfunctions dates back to Brascamp--Lieb \cite{brascamp1976extensions}, where they proved that the first Dirichlet
eigenfunction of a convex domain in $\mathbb{R}^n$ is log-concave.
Related strict-concavity results and further developments were obtained in
\cite{CaffarelliFriedman1985,SingerWongYauYau1985}. A major advance
was made by Andrews--Clutterbuck \cite{AndrewsClutterbuck2011}, who
established a sharp modulus of concavity for the logarithm of the first
eigenfunction on convex Euclidean domains; this quantitative refinement
of log-concavity was the key ingredient in their proof of the fundamental
gap conjecture. For convex domains
on the sphere, Lee--Wang \cite{LeeWang1987estimate} proved log-concavity by a
continuity argument, and subsequent work obtained stronger quantitative
estimates; see
\cite{SetoWangWei2019,HeWeiZhang2020,DaiSetoWei2021}.
For surfaces of variable positive curvature, log-concavity estimates were
obtained by Khan--Nguyen--Tuerkoen--Wei
\cite{KNTW}. 
% More recently, Khan--Saha--Tuerkoen \cite{KST2025} obtained a priori
% quantitative Hessian estimates for first Dirichlet eigenfunctions under an
% assumed log-concavity condition for domains on a general Riemannian manifold. 
In a different direction, Aryan--Law \cite{AryanLaw2025} proved a
general concavity principle for elliptic and parabolic equations on locally
symmetric spaces with nonnegative sectional curvature, including complex
projective spaces, although the log-concavity problem of the first Dirichlet eigenfunction
lies \emph{outside} the scope of their result.

The connection between log-concavity and the Dirichlet fundamental gap was
already exploited by Singer--Wong--Yau--Yau \cite{SingerWongYauYau1985}, who observed that
log-concavity of the first eigenfunction can be used to compare the
Dirichlet gap with a Neumann eigenvalue. Yau \cite{Yau2003Estimate} further obtained improved gap estimates from quantitative control of the log-concavity of
the first eigenfunction. The sharp modulus of concavity of Andrews--Clutterbuck
\cite{AndrewsClutterbuck2011} ultimately yielded the optimal Euclidean
estimate
\[
\lambda_2-\lambda_1\ge\frac{3\pi^2}{D^2}.
\]
Analogous sharp gap estimates on the sphere were later
proved in \cite{SetoWangWei2019,HeWeiZhang2020,DaiSetoWei2021}.

The geometry of the ambient manifold plays an essential role. In
negative curvature, ordinary convexity is not sufficient: Shih \cite{Shih1989} constructed a convex domain in hyperbolic space
whose first eigenfunction is not log-concave. Moreover,
Bourni--Clutterbuck--Nguyen--Stancu--Wei--Wheeler
\cite{BCNSWW2022} showed that the normalized fundamental gap of convex
domains in $\mathbb H^n$ can be arbitrarily small, precluding any
uniform quantitative log-concavity estimate of the Euclidean or
spherical type for general convex domains. On the other hand, stronger
boundary convexity can restore the phenomenon. Wei--Xiao
\cite{WeiXiao2025} proved \emph{super} log-concavity for
\emph{horo}-convex domains in $\mathbb H^n$ with a diameter restriction, and very recently
Dai--Ennis--Nguyen--Wei \cite{dai2026logconcavity} obtained
log-concavity and level-set horoconvexity for bounded horoconvex domains
in $\mathbb{H}^2$ without a diameter restriction.

\subsection{Proof strategy and organization}

Our proof follows the deformation and first-contact framework on general Riemannian manifolds
in \cite{KNTW}. We first prove \eqref{eq:main-gradient-bound}. By Huisken's convergence result for mean curvature flow of convex hypersurfaces \cite{Huisken1986}, we can construct a deformation from a sufficiently small and nearly round domain to the target domain, preserving $1$-convexity. We establish the desired Hessian inequality at the initial end of the deformation and in a uniform boundary collar. The problem is therefore reduced to excluding a hypothetical interior first contact point. To rule out any interior first contact, we consider the modified Hessian
\[
K
=
\nabla^2(-\log u)
-
\psi\bigl(|\nabla\log u|^2\bigr)g.
\]
The $1$-convexity assumption provides the boundary positivity of $K$, while
a Riccati comparison gives a uniform lower bound for the first Dirichlet
eigenvalue. At an interior null contact, the anisotropic curvature terms
of $\CP^2$ are controlled by a determinant estimate. After proving simple
nullity of $K$ at any contact point, the remaining finite-dimensional optimization reduces to three
extremal orientations of $\nabla\log u$ relative to the complex line
spanned by $X$ and $JX$.

We then use this estimate \eqref{eq:main-gradient-bound}, together with lower bound of $\lambda_1$ and a second determinant argument, to obtain the uniform bound \eqref{eq:main-uniform-bound}.
The two estimates are complementary and yield the uniform convexity of
regular level sets and the fundamental-gap estimate. Both corollaries are proved in
\cref{sec:uniform-hessian-bound}. The first follows by combining the two
Hessian lower bounds in \cref{thm: main}, while the second follows from the
uniform Hessian bound and the weighted Bochner formula for the ground-state
transform.

The paper is organized as follows. \cref{sec:prelim} collects the geometric
preliminaries and the first-contact principle. Section~\ref{sec:modified-hessian} introduces the modified Hessian
and establishes its boundary positivity. Section~\ref{sec:The first-contact calculation in CP2} contains
a lower bound estimate on $\lambda_1(\Omega)$ and the interior first-contact analysis, and
Section~\ref{sec:Proof of first main theorem} completes the deformation argument and the proof of the gradient-dependent bound \eqref{eq:main-gradient-bound} in the main theorem. \cref{sec:uniform-hessian-bound} proves the uniform Hessian lower bound and its two corollaries.
Appendix~A contains the auxiliary scalar inequalities and
finite-dimensional computations used in the proof of \cref{thm: main}.
\subsection*{Acknowledgments} The author would like to express his sincere gratitude to his doctoral advisor, Professor Guofang Wei, for suggesting this problem, for many insightful discussions,
and for her generous guidance, encouragement, and support throughout the
development of this work. The author also thanks Jia-Lin Hsu for helpful discussions and valuable comments on this work.

\subsection*{AI disclosure statement} The author used OpenAI's ChatGPT Pro as an assistive tool in brainstorming, mathematical development, and the preparation of this manuscript. In particular, ChatGPT assisted in identifying the exact choice of the barrier function $\psi$
used in the modified tensor, exploring auxiliary arguments, pre-checking calculations, and improving the exposition. All AI-assisted suggestions were independently checked, developed, and verified by the author, who takes full responsibility for the content of the paper.

%&& ALL Blue parts above has been used. Sep 9 7pm

%Nice thanks

%nice way to talk lol. I feel that we are AI agents constructing message boards.

\section{Preliminaries}\label{sec:prelim}

\subsection{Fubini-Study metric on the complex projective space $\CP^n$}

%Throughout the paper, we equip the complex projective space $(\CP^n,J,g)$ with the Fubini-Sutdy metric, which is of constant holomorphic sectional curvature $4$.
%With respect to the following convention for Riemannian curvature tensor,
%\begin{align*}
%    R(U,V)W&=\nabla_U\nabla_VW-\nabla_V\nabla_UW-\nabla_{[U,V]}W,
%\end{align*}
%we have 
%\[
%    R_X(Z):=R(Z,X)X=Z-\langle Z,X \rangle X+3\langle Z,JX \rangle JX.
%\]
%More generally, 

Throughout the paper,
we equip the complex projective space $\CP^n$ with the Fubini–Study metric \(g\), normalized so that the holomorphic sectional curvature is \(4\). Thus $\CP^n$ has real dimension $2n$, and sectional curvatures are pinched:
$
1\leq \sec \leq 4,
$
and 
$
\operatorname{Ric}= (2n+2)g.
$

We use the following convention for the Riemannian curvature tensor $R=R_{\mathbb{CP}^n}$:
$$
R(U,V)W
=
\nabla_U\nabla_VW-\nabla_V\nabla_UW-\nabla_{[U,V]}W.
$$
With this convention, the curvature tensor of $\CP^n$ can be written as follows:
$$
R(U,V)W = \langle V,W\rangle U-\langle U,W\rangle V +\langle JV,W\rangle JU-\langle JU,W\rangle JV
-2\langle JU,V\rangle JW.
$$

In particular, if \(X\) is a unit tangent vector and
$
R_X(Z):=R(Z,X)X
$ is the Jacobi operator,
then
\begin{equation}
R_X(Z)
=
Z-\langle Z,X\rangle X
+3\langle Z,JX\rangle JX.\label{FS curvature}
\end{equation}
Consequently, \(R_X\) has eigenvalue \(0\) in the \(X\)-direction, eigenvalue \(4\) in the \(JX\)-direction, and eigenvalue \(1\) on the orthogonal complement
$
\{X,JX\}^{\perp}.
$
Equivalently, for orthonormal vectors \(X,Z\), we have
\(
\sec(X,Z)
=
1+3\langle Z,JX\rangle^{2}.
\)
The distinction between the curvature-\(4\) direction \(JX\) and the $2n-2$ curvature-\(1\) directions orthogonal to \(\{X,JX\}\) is the main geometric feature of $\CP^n$ that enters the first-contact calculation below.

Moreover, the Fubini–Study metric is locally symmetric, so
$
\nabla R=0.
$ 

\subsection{First contact principle}
We recall the first-contact principle used in the continuation argument
for the modified Hessian introduced below.

Let \(\{\Omega_t\}_{0\le t\le1}\) be a smooth family of bounded domains, and suppose that there are smooth diffeomorphisms
$
F_t:\Omega_0\longrightarrow \Omega_t.
$
For each \(t\), let \(u_t>0\) denote the first Dirichlet eigenfunction of \(\Omega_t\), normalized by $\int_{\Omega_t} u_t^2=1.$
Since the first Dirichlet eigenvalue is simple, after pulling the eigenvalue problem back to the fixed domain \(\Omega_0\), standard elliptic perturbation theory implies that \(\lambda_t\) and \(u_t\) depend smoothly on \(t\), with the corresponding \(C^k\)-dependence on compact subsets of the interior. We will use this without further comment.

The following formulation applies to any symmetric two-tensor arising continuously along such a deformation.
\begin{lemma}[First contact principle]\label{lem: First Contact Principle}
Let \(\{\Omega_t\}_{0\le t\le1}\) be a smooth family of domains and \(A_t\) a $C^2$ family of symmetric two-tensors. Suppose
$A_0>0$
and that \(A_t>0\) in a uniform boundary collar $\{p\in \Omega_t| d(p,\partial\Omega_t)< \epsilon_0\}$ for every \(t\).

If positivity of $A_t$ fails at some time, then there exist a first time \(t_*\in(0,1]\), an interior point \(p\in\Omega_{t_*}\), and a unit vector \(X\in T_p\CP^2\) such that
$$
A_{t_*}\ge0,
\qquad
A_{t_*}(X,X)=0.
$$

In particular,
$
A_{t_*}(X,\cdot)=0.
$
If \(X\) is extended locally with \(\nabla X(p)=0\), then

$$
\nabla\bigl(A_{t_*}(X,X)\bigr)(p)=0,
\qquad
\Delta\bigl(A_{t_*}(X,X)\bigr)(p)\ge0.
$$

Moreover, if
$
\dim\ker A_{t_*}(p)=1,
$
then

$$
\Delta\det A_{t_*}(p)\ge0.
$$
    
\end{lemma}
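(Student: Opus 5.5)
The approach is a standard continuity argument followed by elementary calculus at a minimum. I will identify everything along the family by pulling back via \(F_t\); the tensors \(A_t\) then depend continuously on \(t\) on compact subsets of the interior.

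\textbf{Step 1: the first time.} Define
\[
T=\{t\in[0,1]: A_s>0 \text{ on } \Omega_s \text{ for all } s\le t\},
\qquad t_*=\sup T .
\]
We have \(0\in T\). Positivity is open in \(t\): in the collar it holds by hypothesis, and on the compact complement \(\{d(\cdot,\partial\Omega_t)\ge\epsilon_0\}\) the minimal eigenvalue of \(A_t\) is continuous in \(t\) and strictly positive. Hence \(t_*>0\). Since positivity fails at some time, we also know \(t_*\notin T\) unless \(t_*=1\) with failure exactly there; in every case the minimal eigenvalue of \(A_{t_*}\) over the compact interior region is \(0\). It cannot be negative, since a negative value would persist for slightly earlier times by continuity. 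Because \(A_{t_*}>0\) in the collar, the zero is attained at an interior point \(p\), along some unit vector \(X\). This gives \(A_{t_*}\ge0\) and \(A_{t_*}(X,X)=0\).

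\textbf{Step 2: \(A_{t_*}(X,\cdot)=0\).} For any vector \(Y\), the function \(s\mapsto A(X+sY,X+sY)\ge0\) vanishes at \(s=0\). Its derivative at \(s=0\) must therefore vanish, and that derivative is \(2A(X,Y)\).

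\textbf{Step 3: first and second derivatives.} Extend \(X\) near \(p\) by parallel transport along radial geodesics, so \(\nabla X(p)=0\). Set \(\phi=A_{t_*}(X,X)\). This function is nonnegative near \(p\), because the extended \(X\) has norm \(1\) and \(A_{t_*}\ge0\). It also vanishes at \(p\), so \(p\) is a local minimum of \(\phi\). Consequently \(\nabla\phi(p)=0\) and \(\nabla^2\phi(p)\ge0\), and taking the trace gives \(\Delta\phi(p)\ge0\).

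\textbf{Step 4: the determinant.} The determinant of \(A_{t_*}\), computed in any orthonormal frame, is a smooth function that is \(\ge0\) near \(p\) and vanishes at \(p\). So \(p\) is again a local minimum, and \(\Delta\det A_{t_*}(p)\ge0\).

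The simple-nullity hypothesis is what makes this determinant condition useful later. With a one-dimensional kernel, \(\det A\) behaves near \(p\) like \(\phi\) times the product of the remaining positive eigenvalues, so the condition carries genuine information about \(\phi\). Logically, though, the inequality itself only needs nonnegativity and smoothness of \(\det A_{t_*}\), which follows from the \(C^2\) assumption.

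\textbf{Main obstacle.} I expect the only delicate point to be in Step 1. Continuity of the minimal eigenvalue in \(t\) must be justified uniformly on the moving compact regions. This uses the diffeomorphisms \(F_t\) together with the interior \(C^k\)-dependence noted before the lemma. Boundary behaviour is sidestepped entirely by the collar hypothesis.
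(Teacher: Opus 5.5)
Your proposal is correct and follows the paper's proof essentially step for step. The shared steps are: a continuity argument producing the first time $t_*$; the collar hypothesis to place the contact point in the interior; semidefiniteness to get $A_{t_*}(X,\cdot)=0$; and the local-minimum calculus applied to $A_{t_*}(X,X)$ and to $\det A_{t_*}$. Your remark that the determinant inequality needs only $A_{t_*}\ge0$ near $p$ and $\det A_{t_*}(p)=0$, not simple nullity, is accurate; the paper's proof does not really use simple nullity either. Your Step~1 is also more detailed than the paper's one-line \emph{by continuity}, though the clause about $t_*\notin T$ is muddled: in fact $t_*\notin T$ holds in every case, including $t_*=1$.
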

\begin{proof}
By continuity, there is a first time $t_*\in(0,1]$ at which positivity is lost. At this time,
$
A_{t_*}\ge 0,
$
and there are a point $p\in\Omega_{t_*}$ and a unit vector $X\in T_p\mathbb{CP}^2$ such that $A_{t_*}(X,X)=0.$
The uniform boundary positivity implies that $p$ lies in the interior. Since $A_{t_*}$ is positive semi-definite,
$A_{t_*}(X,\cdot)=0.$
Extend $X$ locally so that $\nabla X(p)=0$. Then the scalar function $A_{t_*}(X,X)$ is nonnegative near $p$ and vanishes at $p$, hence has a local minimum there. Therefore
$$
\nabla\bigl(A_{t_*}(X,X)\bigr)(p)=0,
\qquad
\Delta\bigl(A_{t_*}(X,X)\bigr)(p)\ge 0.
$$

If $\ker A_{t_*}(p)=\mathbb{R}X$, then $\det A_{t_*}\ge0$ near $p$ and $\det A_{t_*}(p)=0$. Thus $\det A_{t_*}$ also has a local minimum at $p$, and therefore
$
\Delta\det A_{t_*}(p)\ge0.
$
\end{proof}
% In the proof of the main theorem, \(A_t\) will be the modified Hessian \(K_t\). The boundary estimate will rule out any possible first contact near \(\partial\Omega_t\), while the interior calculation will show that the final inequality above is incompatible with a null direction.

\section{The modified Hessian tensor and its boundary behavior}\label{sec:modified-hessian}

Let \(u>0\) be the first Dirichlet eigenfunction of $\Omega$,
\[
    -\Delta u=\lambda u,
    \qquad
    u|_{\partial\Omega}=0,
\]
and put $f:=-\log(u)$, $H:=\nabla^2 f$.
Then
\begin{equation}\label{eq:cp2-f-equation}
    \Delta f=\lambda+ |\nabla f|^2.
\end{equation}

We consider the following scalar barrier
\begin{equation}\label{eq:cp2-barrier-definition}
    \psi(s)
    :=
    \frac{s}{\sqrt{1+s}}
    -
    \log(1+s),
    \qquad
    s\geq0,
\end{equation}
and define the corresponding modified Hessian 
\begin{equation}\label{eq:cp2-K-definition}
    K
    :=
    H-\psi(|\nabla f|^2)g.
\end{equation}
Since \(\psi\geq0\), positivity of \(K\) is stronger than the desired
log-concavity:
\[
    K>0
    \quad\Longrightarrow\quad
    H=K+\psi(s)g>0
    \quad\Longrightarrow\quad
    \nabla^2\log u<0.
\]

\begin{lemma}[Elementary properties of the barrier]
\label{lem:cp2-barrier-properties}
Let \(\psi\) be defined by
\cref{eq:cp2-barrier-definition}. Then
\[
    \psi(0)=0,
    \qquad
    \psi(s)>0
    \quad
    \text{for }s>0.
\]
% Writing
% \[
%     r=\sqrt{1+s},
% \]
% one has
% \begin{align}
%     \psi
%     &=
%     r-\frac1r-2\log r,
%     \label{eq:psi-r}\\
%     \psi'
%     &=
%     \frac{(r-1)^2}{2r^3},
%     \label{eq:psi-prime}\\
%     \psi''
%     &=
%     \frac{(r-1)(3-r)}{4r^5}.
%     \label{eq:psi-second}
% \end{align}
% \begin{equation}\label{eq:psi-combination-one}
%     \psi'+2s\psi''
%     =
%     \frac{(r-1)^2(2r+3)}{2r^5}>0
%     \qquad
%     (s>0),
% \end{equation}
% and
\begin{equation}\label{eq:psi-combination-two}
    (\psi')^2+\psi\psi''\geq0.
\end{equation}
\begin{equation}\label{eq:psi-large-s}
    \psi(s)
    =
    \sqrt{s}-\log s+O(s^{-1/2})
    \qquad
    \text{as }s\to\infty.
\end{equation}
Moreover,
\begin{equation}\label{eq:psi-combination-one}
    \psi'+2s\psi''
    =
    \frac{(\sqrt{1+s}-1)^2(2\sqrt{1+s}+3)}{2(1+s)^{5/2}}>0
    \qquad
    (s>0),
\end{equation}
\end{lemma}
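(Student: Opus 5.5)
The plan is to reparametrize by $r:=\sqrt{1+s}\in[1,\infty)$, so that every claim becomes an elementary identity or inequality in one variable. Since $s=r^2-1$, we have
\[
\psi=\frac{r^2-1}{r}-2\log r=r-\frac1r-2\log r .
\]
This gives $\psi(0)=\psi|_{r=1}=0$ at once. For positivity, set $h(r)=r-r^{-1}-2\log r$. Then
\[
h'(r)=1+r^{-2}-2r^{-1}=\frac{(r-1)^2}{r^2}>0 \quad\text{for } r>1,
\]
so $h(r)>h(1)=0$ whenever $s>0$.

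Next I compute the derivatives in $s$ using $dr/ds=1/(2r)$:
\[
\psi'=\frac{h'(r)}{2r}=\frac{(r-1)^2}{2r^3}\ge 0 .
\]
Differentiating $(r-1)^2r^{-3}/2$ in $r$ gives $\tfrac12(r-1)r^{-4}\bigl(2r-3(r-1)\bigr)=\tfrac12(r-1)(3-r)r^{-4}$. Dividing by $2r$ yields
\[
\psi''=\frac{(r-1)(3-r)}{4r^5}.
\]
For \eqref{eq:psi-combination-one}, I substitute $2s=2(r^2-1)=2(r-1)(r+1)$. This gives
\[
\psi'+2s\psi''=\frac{(r-1)^2}{2r^5}\bigl(r^2+(r+1)(3-r)\bigr)=\frac{(r-1)^2(2r+3)}{2r^5},
\]
which is the stated formula and is positive for $r>1$.

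For the large-$s$ asymptotics \eqref{eq:psi-large-s}, I note that $r=\sqrt s\,(1+s^{-1})^{1/2}=\sqrt s+O(s^{-1/2})$ and $1/r=O(s^{-1/2})$. Also $2\log r=\log(1+s)=\log s+O(s^{-1})$. Combining these gives $\psi=\sqrt s-\log s+O(s^{-1/2})$.

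The main obstacle is \eqref{eq:psi-combination-two}, because $\psi''<0$ for $r>3$, i.e.\ $s>8$. On $r\le 3$ both terms $(\psi')^2$ and $\psi\psi''$ are nonnegative, so that range is trivial. For $r>3$, I factor out $(r-1)r^{-6}/4$. This reduces the inequality to
\[
(r-1)^3+(3-r)\,r\,h(r)\ge 0, \qquad\text{i.e.}\qquad r\,h(r)\le \frac{(r-1)^3}{r-3}\quad (r>3).
\]
I first use the crude bound $h(r)\le r-1/r$, so that $r\,h\le r^2-1$. It then suffices that $(r+1)(r-3)\le (r-1)^2$, i.e.\ $r^2-2r-3\le r^2-2r+1$, which is always true. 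This settles the range $r>3$ and completes all the claimed properties.
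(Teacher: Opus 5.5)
Your proof is correct. For $\psi(0)=0$, positivity, the formulas for $\psi'$ and $\psi''$, \eqref{eq:psi-combination-one} and the asymptotics, it uses the same substitution $r=\sqrt{1+s}$ as the paper.

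The one genuine difference is \eqref{eq:psi-combination-two}. The paper simplifies exactly,
\[
(\psi')^2+\psi\psi''=\frac{r-1}{2r^6}\,G(r),\qquad G(r)=r(r-3)\log r+2(r-1),
\]
and proves $G\ge0$ on $[1,\infty)$ from $G(1)=G'(1)=0$ and $G''=2\log r+3(1-1/r)\ge0$.

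You instead split at $r=3$:
\begin{itemize}
\item On $[1,3]$ you use only the signs $\psi\ge0$ and $\psi''\ge0$.
\item On $(3,\infty)$ you discard the logarithm via $\log r\ge0$. In the paper's notation this is exactly the observation $G(r)\ge 2(r-1)>0$, since $r(r-3)\log r\ge0$ there.
\end{itemize}
In the paper's formulation the nontrivial range is $[1,3]$, where $r(r-3)\log r\le0$. Your sign argument disposes of that range for free, so you avoid the convexity argument for $G$ entirely.

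What the paper's route buys is the closed form for $(\psi')^2+\psi\psi''$. It is reused in the appendix, where $\theta^2=2G(r)/(r-1)^3$ enters the vertex estimate at $(\tau,x)=(s,s)$. Your route proves the inequality more quickly and elementarily, but does not produce that identity.
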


\begin{proof}
    Writing $r=\sqrt{1+s},$ one has
\begin{align}
    \psi
    &=
    r-\frac1r-2\log r,
    \label{eq:psi-r}\\
    \psi'
    &=
    \frac{(r-1)^2}{2r^3}\geq 0,
    \label{eq:psi-prime}\\
    \psi''
    &=
    \frac{(r-1)(3-r)}{4r^5}.
    \label{eq:psi-second}
\end{align}
Then \cref{eq:psi-combination-one} follows by direct computation. To prove $\psi>0,$ note $\psi'(0)=0, \quad \psi'> 0$ if $r>1$, and $\psi(0)=0.$ For \cref{eq:psi-combination-two}, direct simplification gives
\[
    (\psi')^2+\psi\psi''
    =
    \frac{r-1}{2r^6}
    \left[
        r(r-3)\log r+2(r-1)
    \right].
\]
Set
\[
    G(r)
    =
    r(r-3)\log r+2(r-1).
\]
Then
\[
    G(1)=G'(1)=0, \quad G''(r)
    =
    2\log r
    +
    3\left(1-\frac1r\right)
    \geq0
    \qquad
    (r\geq1).
\]
Hence \(G\geq0\), proving
\cref{eq:psi-combination-two}. The asymptotic expansion
\cref{eq:psi-large-s} is immediate from the definition.
\end{proof}

We now show that $K_t$ is positive in a uniform neighborhood of the boundary. 
\begin{lemma}[Uniform boundary positivity of \(K\)]
\label{lem:cp2-K-boundary}
Let \(\{\Omega_t\}\) be a smooth compact family of 1-convex domains. Let \(u_t\) be the normalized positive first
Dirichlet eigenfunction, let
\[
    f_t=-\log u_t,
    \qquad
    K_t
    =
    \nabla^2f_t
    -
    \psi(|\nabla f_t|^2)g.
\]
Then there exists a uniform boundary-collar radius
\(\rho_0>0\) such that
\[
    K_t>0
\]
whenever $  0<d(x,\partial\Omega_t)<\rho_0,
    \quad
    0\leq t\leq1.$
\end{lemma}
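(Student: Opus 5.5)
We prove the statement by a boundary asymptotic expansion of $f=-\log u$. Fix $t$ and a boundary point, and let $d$ denote the distance to $\partial\Omega_t$, which is smooth in a uniform collar by compactness of the family. By the Hopf lemma and smoothness up to the boundary, $u=d\,w$ with $w$ smooth and $w\ge c_0>0$ on $\partial\Omega_t$. The constant $c_0$ and all $C^3$ bounds of $w$ are uniform in $t$, because the family is compact and $u_t$ depends smoothly on $t$. Then $f=-\log d-\log w$, so
\[
\nabla f=-\frac{\nabla d}{d}-\frac{\nabla w}{w},\qquad
\nabla^2 f=\frac{\nabla d\otimes\nabla d}{d^2}-\frac{\nabla^2 d}{d}+O(1).
\]
Hence $|\nabla f|^2=d^{-2}+O(d^{-1})$, and $\psi(|\nabla f|^2)\le |\nabla f|=d^{-1}+O(1)$, using $\psi(s)\le s/\sqrt{1+s}\le\sqrt s$.

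We test $K$ on a unit vector $V=a\nabla d+b\,T$, where $T\perp\nabla d$ is unit and $a^2+b^2=1$. The normal-normal entry is $d^{-2}+O(d^{-1})$, which dominates $\psi\le d^{-1}+O(1)$, so it causes no difficulty. The cross terms $\nabla^2 f(\nabla d,T)$ are $O(1)$, since $\nabla^2 d(\nabla d,\cdot)=0$ and the $\nabla\log w$ terms are bounded. The key term is the tangential one. On the level set of $d$ at distance $d$, $-\nabla^2 d(T,T)=\II_d(T,T)$ is the second fundamental form of the parallel hypersurface with respect to the inward normal $\nabla d$. By Riccati comparison in $\CP^2$, where curvature lies in $[1,4]$ and $\II$ is $C^1$ in $d$, we have $\II_d\ge\II_{\partial\Omega}+O(d)\ge(1+O(d))g$. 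This gives
\[
\nabla^2 f(T,T)\ge \frac{1}{d}-\frac{\nabla^2 w(T,T)}{w}+\frac{(\nabla w\cdot T)^2}{w^2}-C\ge \frac1d - C.
\]
So $K(T,T)\ge d^{-1}-C-\psi(|\nabla f|^2)$. This is the main obstacle: the leading orders $d^{-1}$ cancel, so we need the next-order term of $\psi$.

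To resolve this, we use the asymptotics \eqref{eq:psi-large-s}: $\psi(s)=\sqrt s-\log s+O(s^{-1/2})$. Also $\sqrt{s}=|\nabla f|=d^{-1}+O(1)$ and $\log s=-2\log d+O(d)$. Therefore
\[
\psi(|\nabla f|^2)\le \frac1d+C+2\log d,
\]
and so
\[
K(T,T)\ge -2C-2\log d=2\log(1/d)-2C\to+\infty\quad\text{as } d\to0,
\]
uniformly in $t$. This is exactly why the $-\log(1+s)$ correction is built into $\psi$, and why $1$-convexity (rather than mere convexity) is needed: it matches the $1/d$ coefficient. One must check that the $O(1)$ constant in $|\nabla f|=d^{-1}+O(1)$ is really bounded. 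It is, since $|\nabla f|^2=d^{-2}+2d^{-1}\langle\nabla d,\nabla w\rangle/w+|\nabla w|^2/w^2$, whose square root is $d^{-1}+O(1)$.

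Finally we assemble the quadratic form:
\[
K(V,V)\ge a^2\bigl(d^{-2}-Cd^{-1}\bigr)-C|ab|+b^2\bigl(2\log(1/d)-C\bigr).
\]
This is positive for $d<\rho_0$ with $\rho_0$ small and uniform, since $C^2\ll d^{-2}\log(1/d)$. Uniformity in $t$ follows from the uniform $C^3$ control of $w_t$ and $d_t$ and the uniform Hopf constant over the compact family.
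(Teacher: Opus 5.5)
Your proposal is correct and follows essentially the same route as the paper's proof. Both use the Hopf factorization $u=d\,w$, the expansion $\nabla^2 f=\frac{\nabla d\otimes\nabla d}{d^2}-\frac{\nabla^2 d}{d}+O(1)$, and $1$-convexity to match the $1/d$ tangential term, so that the $-\log(1+s)$ correction in $\psi$ leaves a $2\log(1/d)$ surplus; the $O(1)$ mixed entries are then absorbed, by your explicit $2\times 2$ quadratic form in $(a,b)$ where the paper invokes a Schur complement.
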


\begin{proof}
Write $\rho=d(\,\cdot\,,\partial\Omega_t)$
and let \(\nu=\nabla\rho\) be the inward unit normal. By the Hopf boundary
lemma and elliptic boundary regularity, near the boundary \cite{GilbargTrudinger2001}, we have
\[
    u_t=\rho a_t,
\]
where \(a_t>0\) is smooth and, because the deformation family is compact,
\(a_t\) and its derivatives are uniformly controlled.

Consequently,
\[
    f_t=-\log\rho-\log a_t
\]
and
\begin{equation}\label{eq:H-boundary-cp2}
    \nabla^2f_t
    =
    \frac{d\rho\otimes d\rho}{\rho^2}
    -
    \frac{\nabla^2\rho}{\rho}
    +O(1),
\end{equation}
uniformly in \(t\). For a unit vector $T$ tangent to a parallel equidistant hypersurface,
\[
    -\nabla^2\rho(T,T)
    =
    \II_{\partial\Omega_t}(T,T)+O(\rho).
\]
Thus
\begin{equation}\label{eq:H-boundary-tangent-cp2}
    \nabla^2f_t(T,T)
    =
    \frac{\II_{\partial\Omega_t}(T,T)}{\rho}
    +O(1).
\end{equation}

Furthermore,
\[
    |\nabla f_t|^2
    =
    \rho^{-2}+O(\rho^{-1}),
\]
and therefore, by
\cref{eq:psi-large-s},
\begin{equation}\label{eq:psi-boundary-cp2}
    \psi(|\nabla f_t|^2)
    =
    \frac1\rho
    +
    2\log\rho
    +
    O(1).
\end{equation}
For tangential \(T\), combining
\cref{eq:H-boundary-tangent-cp2,eq:psi-boundary-cp2} gives
\[
\begin{aligned}
    K_t(T,T)
    &=
    \frac{
        \II_{\partial\Omega_t}(T,T)-|T|^2
    }{\rho}
    -
    2\log\rho\,|T|^2
    +
    O(1)|T|^2\\
    &\geq
    2|\log\rho|\,|T|^2
    +
    O(1)|T|^2.
\end{aligned}
\]
This is strictly positive for sufficiently small \(\rho\), even in directions
where equality holds in \(\II\geq g\).

In the normal direction,
\[
    K_t(\nu,\nu)
    =
    \rho^{-2}-\rho^{-1}
    +O(|\log\rho|)>0,
\]
while the mixed normal--tangential entries are \(O(1)\). Since the mixed block is $O(1)$, the Schur complement criterion \cite[Chapter~1]{zhang2005schur} implies positivity for all sufficiently small $\rho$. Compactness of the family gives a uniform
choice of \(\rho_0\).
\end{proof}
\begin{remark}[Boundary asymptotics of the barrier]
The leading-order growth of $\psi$ is dictated by the $1$-convexity
assumption.
From preceding calculations, 
the leading-order growth of our barrier, $\frac{\psi(s)}{\sqrt{s}}\longrightarrow1,$ is optimal at the boundary.
Moreover, the lower-order term
$\psi(|\nabla f|^2)=\rho^{-1}+2\log\rho+O(1)$
provides the strict positivity needed even in directions where
$\mathrm{II}_{\partial\Omega}(T,T)=1$.
\end{remark}

\section{The first-contact calculation in $\CP^2$}\label{sec:The first-contact calculation in CP2}
In this section, we obtain the estimates needed to rule out interior null contact $(p,X)$ of $K.$
\subsection{A uniform spectral bound}
Before we carry out the first-contact calculations, we need a spectral bound $\lambda \geq 16$ of the first Dirichlet eigenvalue of $\Omega.$ We obtain this by combining an inradius estimate with a Riccati comparison argument.
\begin{lemma}[Spectral bound under $\kappa$-convexity]\label{lem:spectral bound kappa convex}
Let $\Omega\subset \CP^n$ be a connected smooth domain satisfying $\II_{\partial\Omega}\ge \kappa g$ for some $\kappa>0.$ Then
\[
    \lambda_1(\Omega)
    \ge
    \frac{2\pi^2}{\arctan^2(2/\kappa)}.
\]
In particular, when $\kappa=1$,
\[
    \lambda_1(\Omega)
    \ge
    \frac{2\pi^2}{\arctan^2 2}
    >16.
\]
\end{lemma}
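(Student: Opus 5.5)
The plan is to (1) bound the inradius of $\Omega$ using a Riccati comparison along inward normal geodesics, and (2) turn that inradius bound into a lower bound for $\lambda_1$ through a Barta-type argument built on the distance function $\rho=d(\cdot,\partial\Omega)$.

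\emph{Step 1: inradius bound.} Let $\gamma$ be an inward unit-speed normal geodesic from $q\in\partial\Omega$, and let $\rho$ be realized along $\gamma$ up to the cut locus of $\partial\Omega$. Consider the second fundamental form $h(s)$ of the equidistant hypersurface $\{\rho=s\}$, which satisfies the matrix Riccati equation $h'=-h^2-R_{\gamma'}$. The key choice is to track the single direction $J\gamma'$, which is tangent to the level sets. By \eqref{FS curvature} it is an eigendirection of $R_{\gamma'}$ with eigenvalue $4$. Restricting the Riccati equation to $J\gamma'$ and comparing, the scalar $\eta(s):=h(J\gamma',J\gamma')$ satisfies
\[
\eta'\le -\eta^2-4,\qquad \eta(0)\ge\kappa.
\]
One has to handle the fact that the eigenframe need not be parallel. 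I would first try to show that $J\gamma'$ is parallel along $\gamma$, using $\nabla J=0$; then only the inequality $(h^2)(e,e)\ge h(e,e)^2$ is needed. Comparison with $\bar\eta(s)=2\cot\bigl(2s+\operatorname{arccot}(\kappa/2)\bigr)$ shows $\eta$ blows up before
\[
s_0=\tfrac12\arctan(2/\kappa).
\]
Hence every point of $\Omega$ lies within distance $s_0$ of $\partial\Omega$, that is, $\operatorname{inrad}(\Omega)\le \tfrac12\arctan(2/\kappa)$. Along the way I would also record the Laplacian comparison $\Delta\rho\le 0$ (indeed $\le -\eta$-type bounds) in the barrier sense, using that all principal curvatures of the level sets stay positive by the same Riccati comparison with $\sec\ge1$.

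\emph{Step 2: eigenvalue bound via Barta.} Put $r=\tfrac12\arctan(2/\kappa)$ and choose the test function $\phi=\sin\bigl(\tfrac{\pi\rho}{2r}\bigr)$, or its barrier-sense version on the cut locus. Barta's inequality gives $\lambda_1\ge\inf_\Omega(-\Delta\phi/\phi)$. Since $\Delta\rho\le0$ and $\phi'\ge0$ for $\rho\le r$, we get
\[
-\Delta\phi/\phi\ \ge\ \frac{\pi^2}{4r^2}.
\]
This alone yields $\pi^2/\arctan^2(2/\kappa)$, which is off by a factor of $2$ from the claimed constant.

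\emph{Main obstacle: the factor $2$.} Recovering the extra factor is where I expect the real difficulty. The natural fix is to exploit $\Delta\rho\le -\mathrm{tr}\,h$ quantitatively, because the mean curvature of the level sets is not merely nonnegative but large. It is at least $\eta$ plus the contribution of the three other principal curvatures, which stay $\ge\kappa$-comparable by $\sec\ge1$. With $-\Delta\rho$ large, the first-order term $-\phi'\Delta\rho/\phi$ in $-\Delta\phi/\phi$ adds positively. I would try to combine this with a modified one-dimensional profile, namely the first eigenfunction of the weighted problem $\phi''+m(s)\phi'+\mu\phi=0$ on $[0,r]$ with $m$ the model mean curvature. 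The goal is to reach $\mu\ge \pi^2/(2r^2)$.

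If that falls short, I would fall back on a Hersch-type reflection, or a Li--Yau/Kasue comparison using the full Riccati system rather than only the $J\gamma'$ direction. The final numerical claim is then arithmetic: $\arctan 2\approx1.1071$, so $2\pi^2/\arctan^2 2\approx16.1>16$.
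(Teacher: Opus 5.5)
There is a genuine gap, and it is exactly where you flag it. As written, Step 2 uses only $\Delta\rho\le0$, so it proves only $\lambda_1\ge\pi^2/\arctan^2(2/\kappa)$. For $\kappa=1$ this is about $8.05$, so neither the stated constant nor the bound $\lambda_1>16$ follows. The rest of Step 2 lists strategies (weighted ODE profile, reflection, Kasue comparison), none of which is carried out.

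Step 1 also has a sign inconsistency. The equation $h'=-h^2-R$ is the Riccati equation for $\nabla^2\rho|_{\nu^\perp}$, and for that tensor $h(0)\le-\kappa$. With $\eta(0)\ge\kappa$ and $\eta'\le-\eta^2-4$, your comparison function $2\cot\bigl(2s+\operatorname{arccot}(\kappa/2)\bigr)$ decreases and does not blow up at $\tfrac12\arctan(2/\kappa)$. The consistent convention is $A_\rho=-\nabla^2\rho|_{\nu^\perp}$, which gives $A_\rho'=A_\rho^2+R_\nu$. Then $h(\rho):=A_\rho(J\nu,J\nu)$ satisfies $h'\ge h^2+4$, hence $h(\rho)\ge2\tan\bigl(2\rho+\arctan(\kappa/2)\bigr)$, and this recovers your inradius $R_\kappa=\tfrac12\arctan(2/\kappa)$.

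The missing idea is that this same one-direction comparison already gives a Laplacian bound that exactly matches the sine profile, so no modified profile is needed. Put $\delta=R_\kappa-\rho$ and $a=\pi/(2R_\kappa)>2$, so that $2\tan\bigl(2\rho+\arctan(\kappa/2)\bigr)=2\cot(2\delta)$ and $a\delta=\pi/2-a\rho$. Two facts give the chain below:
\begin{itemize}
\item since $A_\rho>0$, you may drop the other three principal curvatures from the trace, with no quantitative information about them needed;
\item $x\mapsto x\cot x$ is decreasing on $(0,\pi/2]$ and $2\delta\le a\delta\le\pi/2$.
\end{itemize}
Hence
\[
-\Delta\rho=\tr A_\rho\ge h(\rho)\ge2\cot(2\delta)\ge a\cot(a\delta)=a\tan(a\rho).
\]
Therefore, for $w=\sin(a\rho)$,
\[
\Delta w=-a^2w+a\cos(a\rho)\,\Delta\rho\le-a^2w-a^2\cos(a\rho)\tan(a\rho)=-2a^2w.
\]
So the first-order term contributes a second full copy of $a^2$. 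The weak form of Barta's inequality then gives $\lambda_1\ge2a^2=2\pi^2/\arctan^2(2/\kappa)$. This is how the paper argues. Beyond this, you only need the comparison to hold in the barrier/distributional sense across the cut locus, which you already anticipate.
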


\begin{proof}
Let
$  \rho(x)=d(x,\partial\Omega)$
be the distance to the boundary, and let
\[
    A_\rho=-\nabla^2\rho\big|_{(\nabla\rho)^\perp}
\]
be the shape operator of the inner parallel hypersurfaces, with respect to
the inward unit normal $\nu=\nabla\rho$.

Along a normal minimizing geodesic, the Riccati equation of the shape operator is
\[
    A_\rho'=A_\rho^2+R_\nu.
\]
Since the holomorphic sectional curvature of $\CP^n$ is $4$ and $J\nu$
is parallel along the normal geodesic, the function
\[
    h(\rho):=A_\rho(J\nu,J\nu)
\]
satisfies
\[
    h'
    =
    |A_\rho(J\nu)|^2+4
    \ge h^2+4,
    \qquad
    h(0)\ge\kappa.
\]
Comparison with the solution of
\[
    y'=y^2+4,
    \qquad
    y(0)=\kappa,
\]
gives
\[
    h(\rho)
    \ge
    2\tan\left(
        2\rho+\arctan\frac{\kappa}{2}
    \right).
\]
The comparison solution blows up at
\[
    R_\kappa
    :=
    \frac{\pi}{4}
    -\frac12\arctan\frac{\kappa}{2}
    =
    \frac12\arctan\frac{2}{\kappa}.
\]
A minimizing normal geodesic cannot pass through a focal point of
$\partial\Omega$. Hence $\rho(x)\le R_\kappa$
for every $x\in\Omega$.

We emphasize that this inradius estimate by itself does \emph{not} imply an
eigenvalue \emph{lower} bound by domain monotonicity. We now use the preceding Riccati comparison to
construct a positive supersolution.

Set
$ a_\kappa
    :=
    {\pi}/{2R_\kappa}
    =
    {\pi}/{\arctan(2/\kappa)}$
and define
$ w(x):=\sin(a_\kappa\rho(x)).$
Since $0<\rho\le R_\kappa$ in $\Omega$, we have $w>0$ in $\Omega$ and
$w=0$ on $\partial\Omega$.

At every regular point of $\rho$, the Riccati equation and
$A_0\ge\kappa I>0$ imply $A_\rho>0$. Therefore
\[
    -\Delta\rho
    =
    \tr A_\rho
    \ge h(\rho).
\]
Writing
$ \delta:=R_\kappa-\rho,$
the preceding comparison becomes
$ h(\rho)
    \ge
    2\cot(2\delta).$
Since $\kappa>0$, we have $R_\kappa<\pi/4$, hence $a_\kappa>2$. The
function
$   x\longmapsto x\cot x$
is decreasing on $(0,\pi/2)$. Since
\[
    0<2\delta\le a_\kappa\delta<\frac{\pi}{2},
\]
it follows that
\[
    2\cot(2\delta)
    \ge
    a_\kappa\cot(a_\kappa\delta).
\]
Because $a_\kappa R_\kappa=\pi/2$ and $\delta+\rho=R_\kappa$,
\[
    \cot(a_\kappa\delta)
    =
    \tan(a_\kappa\rho).
\]
Consequently,
\[
    \Delta\rho
    \le
    -a_\kappa\tan(a_\kappa\rho).
\]

At every regular point of $\rho$,
\[
\begin{aligned}
    \Delta w
    &=
    -a_\kappa^2\sin(a_\kappa\rho)
    +
    a_\kappa\cos(a_\kappa\rho)\Delta\rho\\
    &\le
    -a_\kappa^2 w
    -
    a_\kappa^2
    \cos(a_\kappa\rho)
    \tan(a_\kappa\rho)\\
    &=
    -2a_\kappa^2w.
\end{aligned}
\]
Thus
\[
    -\Delta w\ge 2a_\kappa^2w.
\]

The boundary-distance Laplacian comparison extends across the cut locus
in the barrier, hence distributional, sense; equivalently, the singular
part of $\Delta\rho$ at the cut locus has the favorable sign. Since
$t\mapsto\sin(a_\kappa t)$ is increasing and concave on
$[0,R_\kappa]$, the preceding inequality therefore holds weakly on all
of $\Omega$.

The weak form of Barta's inequality \cite{Barta1937} now gives
\[
    \lambda_1(\Omega)\ge 2a_\kappa^2.
\]
Indeed, if $\varphi\in C_c^\infty(\Omega)$, testing
$-\Delta w\ge2a_\kappa^2w$ against $\varphi^2/w$ and using
Cauchy--Schwarz yields
\[
    2a_\kappa^2\int_\Omega\varphi^2
    \le
    \int_\Omega |\nabla\varphi|^2.
\]
Taking the infimum of the Rayleigh quotient gives
\[
    \lambda_1(\Omega)
    \ge
    2a_\kappa^2
    =
    \frac{2\pi^2}{\arctan^2(2/\kappa)}.
\]
\end{proof}

\subsection{The $\CP^2$ first-contact calculation}

We now derive the interior inequality. The commutation identities used
below are precisely those recorded in
\cite[(2.3)--(2.4)]{KNTW}.

Let
\[
    \mathscr L
    :=
    \Delta-2\langle\nabla f,\nabla\cdot\rangle
\] be the drifted Laplacian.
\begin{lemma}[Bochner and null-direction identities for $K$]
\label{lem:cp2-contact-identities}
Let $s:= |\grad f|^2$. Suppose
\[
    K=H-\psi(s)g\geq0
\]
at an interior point \(p\), and let \(X \in T_p\CP^2\) be a unit null vector of \(K\).
Put
\[
    a=df(X),
    \qquad
    q=df(JX),
\]
and decompose
\[
    \nabla f=aX+qJX+Z,
    \qquad
    Z\perp X,JX.
\]
Put
$ B:=K|_{X^\perp}$ and $T:=\tr B.$
Then
\begin{align}
T
&= \lambda+s-4\psi,
\label{eq:T-contact-cp2}\\
|H|^2
&= 4\psi^2+2\psi T+|B|^2,
\label{eq:Hnorm-contact-cp2}\\
\frac12\mathscr Ls
&= |H|^2+6s.
\label{eq:bochner-s-cp2}
\end{align}
Set
\[
    \tau=q^2+|Z|^2,
    \qquad
    x=q^2,
    \qquad
    v=qJX+Z.
\]
Then, at \(p\),
\begin{equation}\label{eq:scalar-contact-E-cp2}
\begin{aligned}
    E
    :=
    \frac12\mathscr L K(X,X)
    &=
    \psi^2+4\psi-\lambda-a^2
    -3B(JX,JX)+3q^2\\
    &\quad
    -\psi'
    \left(
        4\psi^2+2\psi T+|B|^2+6s
    \right)\\
    &\quad
    -2\psi''
    \left(
        \psi^2a^2
        +
        |(\psi I+B)v|^2
    \right).
\end{aligned}
\end{equation}
\end{lemma}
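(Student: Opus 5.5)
The plan is a direct tensor computation at $p$, organized around the null structure of $K$. Since $K\ge0$ and $K(X,X)=0$, the first contact principle (\cref{lem: First Contact Principle}) gives $K(X,\cdot)=0$. So in an orthonormal frame $\{X,e_2,e_3,e_4\}$ the tensor $K$ is block diagonal, $K=0\oplus B$.

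\textbf{Step 1: the trace identity.} Tracing $K=H-\psi g$ and using \cref{eq:cp2-f-equation} gives $\tr K=\Delta f-4\psi=\lambda+s-4\psi$. By the block structure, $\tr K=\tr B=T$, which is \cref{eq:T-contact-cp2}.

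\textbf{Step 2: the norm identity.} Expand $|K+\psi g|^2=|K|^2+2\psi\tr K+4\psi^2$ and use $|K|^2=|B|^2$. This gives \cref{eq:Hnorm-contact-cp2}.

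\textbf{Step 3: the Bochner identity.} The Bochner formula gives
\[
\tfrac12\Delta s=|H|^2+\langle\nabla f,\nabla\Delta f\rangle+\Ric(\nabla f,\nabla f).
\]
Here $\nabla\Delta f=\nabla s$ because $\lambda$ is constant, and $\Ric=6g$ on $\CP^2$. The drift term $-\langle\nabla f,\nabla s\rangle$ in $\tfrac12\mathscr L s$ cancels $\langle\nabla f,\nabla s\rangle$ exactly, leaving \cref{eq:bochner-s-cp2}.

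\textbf{Step 4: splitting $E$.} Extend $X$ with $\nabla X(p)=0$ and write $K(X,X)=H(X,X)-\psi(s)$, so that
\[
\tfrac12\mathscr L\psi(s)=\tfrac12\psi'\mathscr L s+\tfrac12\psi''|\nabla s|^2 .
\]
\emph{The $\psi$ part.} The first term is $\psi'(4\psi^2+2\psi T+|B|^2+6s)$ by Steps 2 and 3. For the second, use $\nabla s=2H(\nabla f)$ and decompose
\[
H(\nabla f)=K(\nabla f)+\psi\nabla f=Bv+\psi(aX+v),
\]
since $K(X)=0$. This gives $\tfrac12|\nabla s|^2=2\bigl(\psi^2a^2+|(\psi I+B)v|^2\bigr)$, which is the $\psi''$ line of \cref{eq:scalar-contact-E-cp2}.

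\emph{The $H(X,X)$ part.} I would use the commutation identities of \cite[(2.3)--(2.4)]{KNTW}, together with $\nabla R=0$. The first relates $\Delta\nabla^2 f$ to $\nabla^2\Delta f$ plus the Ricci terms $2\Ric(X,H X)=12\psi$ minus the full-curvature term $2\tr(R_X\circ H)$. Using \cref{FS curvature},
\[
\tr(R_X H)=\tr H-H(X,X)+3H(JX,JX)=(\lambda+s)-\psi+3\bigl(B(JX,JX)+\psi\bigr).
\]
Next, $\nabla^2\Delta f(X,X)=\nabla^2 s(X,X)=2|HX|^2+2\nabla^3 f(\nabla f,X,X)$, with $|HX|^2=\psi^2$. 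Commuting the third derivative, $\nabla^3f(\nabla f,X,X)=\nabla_{\nabla f}H(X,X)+\langle R_X\nabla f,\nabla f\rangle$. The first piece is cancelled by the drift in $\mathscr L$. The second equals $s-a^2+3q^2$, using $\nabla f=aX+qJX+Z$.

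\textbf{Step 5: collecting terms.} Halving everything gives
\[
\tfrac12\mathscr L H(X,X)=\psi^2+(s-a^2+3q^2)+6\psi-\bigl(\lambda+s-\psi+3B(JX,JX)+3\psi\bigr),
\]
in which the $s$ terms cancel. What remains is $\psi^2+4\psi-\lambda-a^2+3q^2-3B(JX,JX)$. Subtracting the $\psi$ part from Step 4 yields \cref{eq:scalar-contact-E-cp2}.

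\textbf{Main obstacle.} The hardest part is Step 4's commutation: getting the signs and factors of $2$ in the curvature terms right under the paper's curvature convention. In particular, one must check that the full-curvature contraction produces $-3B(JX,JX)$, and that the third-derivative commutation produces $+3q^2-a^2$ with the $s$ terms cancelling. I would verify this first on a model case (e.g.\ comparing with the known sphere computation, where $R_X$ is the identity on $X^\perp$) before trusting the $\CP^2$ anisotropic terms.
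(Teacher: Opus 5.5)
Your proposal is correct and follows essentially the paper's proof. Both split $E=\tfrac12\mathscr L H(X,X)-\tfrac12\mathscr L\psi(s)$, evaluate the barrier part by the chain rule $\tfrac12\mathscr L\psi(s)=\psi'(|H|^2+6s)+2\psi''|H(\nabla f,\cdot)|^2$ with $H(\nabla f,\cdot)=\psi aX^\flat+(\psi I+B)v$, and compute the curvature trace from $R_X=I-X\otimes X^\flat+3JX\otimes JX^\flat$. The one difference is that the paper quotes the general null-direction identity $\tfrac12\mathscr L K(X,X)=b^2+\tr[(\Hess\log u+d\log u\otimes d\log u)\circ R_X]+6b-\tfrac12\mathscr L b$ from the proof of \cite[Theorem~2.1]{KNTW}, whereas you re-derive it from the commutation identities; the signs you were worried about, $+2\Ric(X,HX)-2\tr(R_X\circ H)$ and $+\langle R_X\nabla f,\nabla f\rangle$, are the correct ones, so your computation goes through.
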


\begin{proof}
    Since \(K(X,\cdot)=0\),
\[
    H(X,X)=\psi,
    \qquad
    H(X,V)=0
    \quad
    (V\perp X).
\]
The Bochner formula gives
\[
    \frac12\Delta s
    =
    |H|^2
    +
    \langle\nabla f,\nabla\Delta f\rangle
    +
    \Ric(\nabla f,\nabla f).
\]
Using
$ \Delta f=\lambda+s$
and \(\Ric=6g\), subtraction of
\(\langle\nabla f,\nabla s\rangle\) yields
\cref{eq:bochner-s-cp2}.
For the null-direction equation, we use the contact computation in the
proof of \cite[Theorem~2.1]{KNTW}. Applied to $b= \psi(s)=\psi(|\grad f|^2)$ it gives, at a null direction of $K=-\bigl(\nabla^2\log u+b\,g\bigr),$
\[
\frac12\mathscr L K(X,X)
=
b^2
+\operatorname{Tr}
\left[
\bigl(\operatorname{Hess}(\log u)+d(\log u)\otimes d(\log u)\bigr)\circ R_X
\right]
+6b
-\frac12\mathscr L b.
\]
Using
\[
R_X=I-X\otimes X^\flat+3JX\otimes JX^\flat,
\qquad
\Delta (\log u)+|\nabla (\log u)|^2=-\lambda,
\]
and
\[
\operatorname{Hess}(\log u)(X,X)=-\psi,
\qquad
\operatorname{Hess}(\log u)(JX,JX)
=-\psi-B(JX,JX),
\]
we obtain
\[
\operatorname{Tr}
\left[
\bigl(\operatorname{Hess}(\log u)+d(\log u)\otimes d(\log u)\bigr)\circ R_X
\right]
=
-\lambda-2\psi-a^2-3B(JX,JX)+3q^2.
\]
Moreover, the chain rule yields
\begin{align*}
\frac12\mathscr L b
=
\frac12\mathscr L\bigl(\psi(s)\bigr) 
&=
\frac12\left(
\psi'(s)\mathscr L s
+
\psi''(s)|\nabla s|^2
\right) \notag\\
&=
\psi'(s)\frac12\mathscr L s
+
\frac12\psi''(s)|\nabla s|^2 \notag\\
&=
\psi'(s)\left(|H|^2+6s\right)
+
\frac12\psi''(s)
\left|2H(\nabla f,\cdot)\right|^2 \notag\\
&=
\psi'(s)\left(|H|^2+6s\right)
+
2\psi''(s)|H(\nabla f,\cdot)|^2.
\end{align*}
Substituting \eqref{eq:T-contact-cp2}, \eqref{eq:Hnorm-contact-cp2}, and
\[
H(\nabla f,\cdot)
=
\psi a X^\flat+(\psi I+B)(qJX+Z)
\]
gives \eqref{eq:scalar-contact-E-cp2}.
\end{proof}

We now show that if an interior first contact occurs, the kernel of $K$ is one-dimensional.
\begin{lemma}[Simple nullity at an interior contact]\label{lem:Simple nullity at an interior contact}
    Let $u>0$ solve
$
-\Delta u=\lambda u
$
on an open subset of $ \CP^2$, where $\lambda \geq 16$. 
Suppose that $K \geq 0$ in a neighborhood of $p$ and that $K(p)$ is singular. Then
$$
\operatorname{dim} \operatorname{ker} K(p)=1 .
$$
In particular, $B=K|_{X^\perp}>0.$
\end{lemma}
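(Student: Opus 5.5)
The plan is to argue by contradiction using the contact identity of \cref{lem:cp2-contact-identities}. Suppose $\dim\ker K(p)\ge 2$. Then every unit vector $X$ in $\ker K(p)$ is a null direction at a point where $K\ge0$ nearby. So, exactly as in \cref{lem: First Contact Principle}, the function $K(X,X)$, with $X$ extended by $\nabla X(p)=0$, has a local minimum at $p$. This gives $\nabla K(X,X)(p)=0$ and $\Delta K(X,X)(p)\ge0$, hence $E=\tfrac12\mathscr L K(X,X)\ge0$ for every such $X$. I will contradict this by choosing $X$ inside the kernel cleverly and showing $E<0$.

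\emph{Choice of $X$.} Let $W\subset\ker K(p)$ be a $2$-plane. The linear functional $X\mapsto df(JX)$ restricted to $W$ has a nontrivial kernel, so I can pick a unit $X\in W$ with $q=df(JX)=0$. This kills the dangerous positive term $3q^2$ in \eqref{eq:scalar-contact-E-cp2}. With this $X$, $B=K|_{X^\perp}$ is a nonnegative form on a $3$-dimensional space with a zero eigenvalue, since $W\cap X^\perp\ne0$. It therefore has rank at most $2$, and Cauchy--Schwarz on its two possibly nonzero eigenvalues gives
\[
|B|^2\ge \tfrac12 T^2,\qquad T=\lambda+s-4\psi .
\]
I will also use that $T\ge0$, which holds because $B\ge0$. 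I drop the favorable terms $-a^2$ and $-3B(JX,JX)\le0$.

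\emph{The $\psi''$ term.} When $\psi''\ge0$, i.e.\ $r=\sqrt{1+s}\le3$, the last line of \eqref{eq:scalar-contact-E-cp2} is nonpositive and can be discarded. When $\psi''<0$, I use \eqref{eq:psi-combination-two}, namely $-\psi''\le(\psi')^2/\psi$. I also bound
\[
|(\psi I+B)v|^2\le \bigl(\psi+\|B\|\bigr)^2|v|^2\le(\psi+T)^2 s,\qquad \psi^2a^2\le\psi^2 s .
\]
This controls the $\psi''$-contribution by $2\psi'^2\psi^{-1}\bigl(\psi^2+(\psi+T)^2\bigr)s$. After these steps, $E$ is bounded above by an explicit function $F(s,T,\lambda)$ built from $\psi,\psi'$. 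Its leading negative part is $-\psi'\bigl(4\psi^2+2\psi T+\tfrac12T^2+6s\bigr)$, competing against $\psi^2+4\psi-\lambda$.

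\emph{Scalar inequality (main obstacle).} It remains to show $F<0$ whenever $\lambda\ge16$, $s\ge0$, and $T=\lambda+s-4\psi$. The term $-\tfrac12\psi'T^2$ is what makes this work. For large $s$ it behaves like $-\tfrac14 s^{3/2}$, while the positive terms grow only like $s$, and $\psi'^2T^2 s/\psi$ is of order $s^{3/2}/4\cdot(1/\sqrt s)\cdot$lower. For small $s$ the margin comes from $\psi^2+4\psi-\lambda\le\psi^2+4\psi-16<0$ as long as $\psi<2\sqrt5-2$. The delicate regime is intermediate $s$, roughly $r\in[3,10]$. There I plan to substitute $r=\sqrt{1+s}$ and use the explicit formulas \eqref{eq:psi-r}--\eqref{eq:psi-second}. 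I would note that $F$ is monotone decreasing in $\lambda$ (through $T$) and prove the inequality at $\lambda=16$ by elementary one-variable estimates, splitting into $r\le3$ and $r\ge3$. If the crude bound $\|B\|\le T$ turns out too lossy there, I would instead keep $|(\psi I+B)v|^2$ exact in the eigenbasis of $B$. The resulting contradiction $E<0\le E$ shows $\dim\ker K(p)=1$, and hence $B>0$.
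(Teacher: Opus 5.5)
There is a genuine gap: the scalar inequality $F<0$ that you reduce to is false on essentially all of $s>8$, so the contradiction never materializes. Two of your estimates are too lossy, and each one alone already breaks the argument.

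\emph{First, decoupling the bounds on $B$.} You bound $|B|^2\ge\frac12T^2$ and $\|B\|\le T$ separately. These extremes are incompatible: the first is attained at eigenvalues $(T/2,T/2)$, the second at $(T,0)$. For large $s$ one has $\psi'\sim\frac1{2\sqrt s}$, $-\psi''\sim(\psi')^2/\psi\sim\frac1{4s^{3/2}}$ and $T\sim s$. Hence $\frac12\psi'T^2\sim\frac14 s^{3/2}$. Your $\psi''$-contribution $\frac{2(\psi')^2}{\psi}(\psi+T)^2s$ behaves like $\frac12 s^{3/2}$. It is not lower order, contrary to your heuristic. So $F\sim+\frac14 s^{3/2}\to+\infty$. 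This persists even with the exact $\psi''$: at $s=63$, $\lambda=16$, one gets $F\approx 12.7-142.0+154.9>0$.

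\emph{Second, the replacement $-\psi''\le(\psi')^2/\psi$ from \eqref{eq:psi-combination-two}.} This is disastrous near $s=8$. There $\psi''\to0$, but $(\psi')^2/\psi\to(2/27)^2/\psi(8)\approx0.0117$. Letting $s\downarrow8$ with $\lambda=16$ gives $F\approx-13.9-23.3+95.5>0$. Your fallback of keeping $|(\psi I+B)v|^2$ exact in the eigenbasis does not rescue this. With $|B|^2$ and $\|B\|$ coupled through the top eigenvalue, the configuration $\kappa=T$ still gives about $+40$ as $s\downarrow 8$, as long as $\psi''$ is estimated this way.

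What is needed is the paper's route for $s>8$:
\begin{enumerate}
\item Write the spectrum of $K(p)$ as $0,0,\kappa,T-\kappa$ with $\kappa\ge T/2$. Then one parameter controls both quantities: $|H|^2=4\psi^2+2\psi T+\kappa^2+(T-\kappa)^2$ and $|H(\nabla f,\cdot)|^2\le s(\psi+\kappa)^2$.
\item Keep $\psi''$ exact.
\item Observe that the resulting bound is a concave quadratic in $\kappa$, with leading coefficient $-2(\psi'+s\psi'')<0$.
\item Complete the square to obtain
\[
E\le\psi^2+4\psi-\lambda-2\psi'\psi^2-6s\psi'-2\psi'\frac{\psi'+2s\psi''}{\psi'+s\psi''}\Bigl(\frac{\lambda+s}{2}-\psi\Bigr)^2 .
\]
\end{enumerate}
In this bound the $s^{3/2}$ terms have cancelled exactly. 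The last term is only of order $s$ and competes with $\psi^2\sim s$. That is why there is no room for lossy steps. The positivity $\psi'+2s\psi''>0$ from \eqref{eq:psi-combination-one}, together with the sharp scalar inequality of Lemma~\ref{lem:Auxiliary barrier inequality} (which gives at least $(\psi+2)^2$), is what yields $E\le-20$.

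Your choice of $X\in\ker K(p)$ with $df(JX)=0$ and your treatment of the range $s\le 8$ agree with the paper.
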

\begin{proof}
    Suppose instead that $\operatorname{dim} \operatorname{ker} K(p) \geq 2$. The map
$$
\operatorname{ker} K(p) \longrightarrow \mathbb{R}, \quad W \longmapsto d f(J W),
$$
is linear, so there is a unit vector $X \in \operatorname{ker} K(p)$ such that
$$
q:=d f(J X)=0 .
$$
After extending $X$ locally with $\nabla X(p)=0$, the function $K(X, X)$ has a local minimum equal to zero at $p$. Hence
$$
E:=\frac{1}{2} \mathscr{L} K(X, X) \geq 0.
$$
First suppose $0 \leq s \leq 8$. Then $\psi^{\prime \prime} \geq 0$ by \cref{eq:psi-second}. Since $q=0, B \geq 0$, and $\psi^{\prime} \geq 0$, \cref{eq:scalar-contact-E-cp2} gives
$$
E \leq \psi^2+4 \psi-\lambda .
$$
The function $\psi$ is increasing and $\psi(8)<1 / 2$, so
$$
E<\frac{1}{4}+2-16<0,
$$
a contradiction.

Now suppose $s>8$, so that $\psi^{\prime \prime}<0$. Since $\operatorname{dim} \operatorname{ker} K(p) \geq 2$, the eigenvalues of $K(p)$ may be written as
$$
0, \quad 0, \quad \kappa, \quad T-\kappa,
$$
where $T=\operatorname{tr} K=\lambda+s-4 \psi$. Choose $\kappa$ to be the larger one of the last two eigenvalues. Then
\begin{equation}
\begin{gathered}
|H|^2=4 \psi^2+2 \psi T+\kappa^2+(T-\kappa)^2 \\
|H(\nabla f, \cdot)|^2 \leq s(\psi+\kappa)^2 .
\end{gathered}
\end{equation}

Using \cref{eq:scalar-contact-E-cp2} and dropping the favorable terms $-a^2$ and $-3 B(J X, J X)$, we obtain
$$
E \leq  \psi^2+4 \psi-\lambda -\psi^{\prime}\left(4 \psi^2+2 \psi T+\kappa^2+(T-\kappa)^2+6 s\right)-2 s \psi^{\prime \prime}(\psi+\kappa)^2.
$$ Then the right-hand side is a concave quadratic function of $\kappa$.
Completing the square gives the upper bound
$$
\begin{aligned}
E \leq & \psi^2+4 \psi-\lambda-2 \psi^{\prime} \psi^2-6 s \psi^{\prime} \\
& -2 \psi^{\prime} \frac{\psi^{\prime}+2 s \psi^{\prime \prime}}{\psi^{\prime}+s \psi^{\prime \prime}}\left(\frac{\lambda+s}{2}-\psi\right)^2 .
\end{aligned}
$$
Now $\psi''<0$ and \cref{eq:psi-combination-one} implies
$
\psi^{\prime}+s \psi^{\prime \prime}> \psi^{\prime}+2s \psi^{\prime \prime}>0.
$

Since $\lambda \geq 16$,
$$
\frac{\lambda+s}{2}-\psi \geq 8+\frac{s}{2}-\psi .
$$
The auxiliary barrier inequality (Lemma~\ref{lem:Auxiliary barrier inequality}) therefore yields
$$
E  \leq \psi^2+4 \psi-16-(\psi+2)^2-2 \psi^{\prime} \psi^2-6 s \psi^{\prime} 
 \leq -20-2 \psi^{\prime} \psi^2-6 s \psi^{\prime}
 <0
$$

This contradicts $E \geq 0$. Hence $\operatorname{ker} K(p)$ is one-dimensional.
\end{proof}

We next derive a \emph{negative} upper bound for $\mathscr L\det K.$
\begin{lemma}\label{lem: det reduction}
Assume the hypotheses and notation of Lemma~\ref{lem:cp2-contact-identities}. Suppose that $K\ge0$ in a neighborhood of $p$, that $K(p)$
has a one-dimensional kernel spanned by the unit vector $X$ so that $B>0$. For brevity, set
$$
\tau=q^2+|Z|^2,
\qquad
x=q^2.
$$
Then $0\le x\le\tau\le s = |\grad f|^2$, and
\begin{equation}\label{Ineq: L det K / 2det B}
    \begin{aligned}
\frac{\mathscr L\det K}{2\det B}+\lambda
\le {}&
\psi^2+4\psi
-\psi'\bigl(4\psi^2+2\psi T+6s\bigr)
-2\psi''\psi^2s-s \\
&+\Bigl[
1-4\Bigl(\psi'
+\sqrt{(\psi')^2+\psi\psi''}\Bigr)
(1+2\psi\psi')
\Bigr]\tau \\
&+\Bigl[
3-12\Bigl(\psi'
+\sqrt{(\psi')^2+\psi\psi''}\Bigr)
\Bigr]x
-\mathcal P(\tau,x),
\end{aligned}
\end{equation}
where
$$
\mathcal P(\tau,x)
:=
\min_{\substack{C=C^*\text{ on }X^\perp\\ \operatorname{tr}C=T}}
\left\{
\psi'|C|^2
+2\psi''|C(qJX+Z)|^2
+3C(JX,JX)
\right\}.
$$
Moreover, the right-hand side of \cref{Ineq: L det K / 2det B} attains its maximum over $0\le x\le\tau\le s$ at one of the three vertices
$$
(\tau,x)=(0,0),\quad (s,0),\quad (s,s).
$$
\end{lemma}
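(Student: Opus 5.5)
Work in an orthonormal frame $\{X,e_2,e_3,e_4\}$ at $p$ with $e_2=JX$, extended so that $\nabla e_i(p)=0$. Since $K(X,\cdot)=0$ at $p$ and $B>0$, the Jacobi formula gives
\[
\nabla\det K=\det B\,\nabla K(X,X),
\qquad
\mathscr L\det K=\det B\Bigl(\mathscr L K(X,X)-2\sum_{k}\bigl\langle B^{-1}\nabla_kK(X,\cdot)|_{X^\perp},\nabla_kK(X,\cdot)|_{X^\perp}\bigr\rangle\Bigr)
\]
at $p$. This is the standard second-order expansion of the determinant at a simple zero eigenvalue.

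Hence
\[
\frac{\mathscr L\det K}{2\det B}=E-\sum_k|B^{-1/2}\nabla_kK(X,\cdot)|_{X^\perp}|^2,
\]
with $E$ given by \cref{eq:scalar-contact-E-cp2}. The plan is not to discard the gradient term. The term $-\psi'|B|^2-2\psi''|(\psi I+B)v|^2-3B(JX,JX)$ in $E$ is rewritten by splitting off the part that is linear in $B$. The quadratic part $\psi'|B|^2+2\psi''|Bv|^2+3B(JX,JX)$, subject to $\operatorname{tr}B=T$, is bounded from below by its minimum $\mathcal P(\tau,x)$, since $B$ is an admissible $C$.

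The cross terms $-4\psi''\psi\langle Bv,v\rangle$, together with the curvature/commutator contributions, are then absorbed. These contributions come from $\nabla_kK(X,X)=0$, the Codazzi-type relation $\nabla_kH(X,e_j)=\nabla_jH(X,e_k)+R$-terms, and $\nabla_k\psi=2\psi'H(\nabla f,e_k)$. The absorption uses the negative gradient term via Cauchy--Schwarz, $2\langle\xi,\eta\rangle\le\langle B\xi,\xi\rangle+\langle B^{-1}\eta,\eta\rangle$. The factor $\psi'+\sqrt{(\psi')^2+\psi\psi''}$ should appear as the positive root of the quadratic $y^2-2\psi'y-\psi\psi''=0$ arising when this Young-type inequality is optimized. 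It is real precisely because of \cref{eq:psi-combination-two}.

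Collecting terms, $\psi^2a^2=\psi^2(s-\tau)$ and $-a^2=-(s-\tau)$ produce the $-s$, $-2\psi''\psi^2 s$ and $+\tau$ contributions. The $3q^2=3x$ term and the $JX$-curvature coefficient produce the $x$-bracket. The remaining $\psi'$-weighted $\tau$ terms produce $-4(\cdots)(1+2\psi\psi')\tau$. The bounds $0\le x\le\tau\le s$ are immediate from $s=a^2+q^2+|Z|^2$.

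For the vertex statement, note that the right-hand side is affine in $(\tau,x)$ apart from $-\mathcal P$. So it suffices to show $\mathcal P$ is concave in $(\tau,x)$ on the triangle. $\mathcal P$ depends on $v$ only through $|Bv|^2$ with $|v|^2=\tau$ and $\langle v,JX\rangle^2=x$; each admissible $C$ gives an expression affine in the rank-one data $vv^T$. Writing $|C v|^2=\operatorname{tr}(C^2vv^T)$ and averaging over the sign/rotation of $Z$ in the $\{X,JX\}^\perp$ plane, the dependence becomes affine in $(\tau,x)$ for each $C$. A minimum of affine functions is concave, so $-\mathcal P$ is convex. The right-hand side is then convex on the triangle and is maximized at a vertex.

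The main obstacle is the middle step: correctly identifying the gradient-of-$K$ terms via the commutation identities of \cite{KNTW} in the $\CP^2$ curvature, and choosing the Young's inequality weights so that exactly the coefficient $\psi'+\sqrt{(\psi')^2+\psi\psi''}$ emerges. I would first try this with $\nabla_k K(X,e_j)$ expressed through $\nabla_X H(e_k,e_j)$ and $\psi'$-terms, then optimize.
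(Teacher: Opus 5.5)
Your Schur-complement identity and your lower bound of the $B$-quadratic part by $\mathcal P(\tau,x)$ are the same as the paper's. In the middle step, only the $W=X$ term of $\sum_k\langle B^{-1}C_{e_k},C_{e_k}\rangle$ is computable. The commutation formula, together with $\nabla K(X,X)(p)=0$ and $\nabla\psi=2\psi'H(\nabla f,\cdot)$, gives $C_X=(M+2\psi'B)V$, where $M=R_X|_{X^\perp}+2\psi\psi' I$ and $V=qJX+Z$. The terms with $k\neq X$ involve uncontrolled third derivatives and are simply discarded; rewriting them through $\nabla_XH(e_k,e_j)$ gains nothing.

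Your predicted quadratic is correct. Expand $\langle B^{-1}(C_X-yBV),C_X-yBV\rangle\ge0$ with $y=2Y$ and $Y^2-2\psi'Y-\psi\psi''=0$. Then all $\langle BV,V\rangle$ terms cancel, including $-4\psi\psi''\langle BV,V\rangle$ from $E$. What remains is $-4Y\langle MV,V\rangle=-4Y[(1+2\psi\psi')\tau+3x]$, which is equivalent to the paper's AM--GM plus Cauchy--Schwarz step.

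The genuine gap is the vertex reduction. For fixed $C$, the functional $F(C,v)=\psi'|C|^2+2\psi''\operatorname{tr}(C^2vv^T)+3C(JX,JX)$ is affine in $vv^T$. But the map $(\tau,x)\mapsto vv^T$ is not affine: the cross term $q(JX\otimes Z^\flat+Z\otimes JX^\flat)$ has size $\sqrt{x(\tau-x)}$. Symmetrizing over the group $\mathcal G$ of isometries of $X^\perp$ preserving $\pm JX$ does kill this term, but it gives an inequality in the wrong direction. Since $\mathcal P=\min_C F(C,Qv)$ for every $Q\in\mathcal G$, you get $\mathcal P(\tau,x)=\mathbb E_Q\min_C F(C,Qv)\le\min_C\mathbb E_Q F(C,Qv)$. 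So your concave min-of-affine function is an \emph{upper} bound for $\mathcal P$. However, $\mathcal P$ enters the estimate with a minus sign, so a \emph{lower} bound is required.

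This lifting cannot be repaired. For $0<x<\tau$, the matrix $vv^T$ spans an extreme ray of the positive semidefinite cone. Hence it is not a convex combination of (rotations of) the vertex configurations $0$, $s\,e\otimes e^\flat$ with $e\perp X,JX$, and $s\,JX\otimes JX^\flat$. Concavity of the lifted function therefore cannot produce the barycentric lower bound. Your framing also never uses that, when $\psi''<0$, finiteness of $\mathcal P$ needs $\psi'+2\tau\psi''\ge\psi'+2s\psi''>0$.

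The paper instead solves the constrained minimization explicitly in \cref{lem:quadratic-minimization}, obtaining $\mathcal P$ in closed form. It then checks concavity only along three one-parameter families: in $x$ at fixed $\tau$, and in $\tau$ along the edges $x=0$ and $x=\tau$. Chaining these gives $\mathcal P(\tau,x)\ge(1-\frac{\tau}{s})\mathcal P(0,0)+\frac{\tau-x}{s}\mathcal P(s,0)+\frac{x}{s}\mathcal P(s,s)$, which is all that is needed. Joint concavity of $\mathcal P$ is never claimed or used.
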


\begin{proof}

At a possible null contact one necessarily has $s= |\grad f|^2>0$, so $\psi'(s)>0$.

For $W\in T_p\CP^2$, define the covector on $X^\perp$ by

$$
C_W(V):=(\nabla_W K)(X,V).
$$
Choose a local orthonormal frame
\[
e_1=X,\qquad e_2,e_3,e_4\in X^\perp,
\qquad
\nabla e_j(p)=0,
\]
and write, near $p$,
\[
K=
\begin{pmatrix}
K_{11} & c^{T}\\
c & \widetilde B
\end{pmatrix},
\qquad
K_{11}=K(X,X).
\]
At the simple null contact,
\[
K_{11}(p)=0,\qquad c(p)=0,\qquad \widetilde B(p)=B>0.
\]
Hence the Schur complement formula gives
\[
\det K
=
\det\widetilde B \times\,
\bigl(K_{11}-c^{T}\widetilde B^{-1}c\bigr).
\]
Since $K_{11}$ has a local minimum at $p$, $\nabla K_{11}(p)=0$; together with
$c(p)=0$, this implies that both the zeroth- and first-order terms of the
Schur complement vanish at $p$. Therefore
\[
\Delta\det K
=
\det B\left(
\Delta  K_{11}
-
2\sum_j
\langle B^{-1}C_{e_j},C_{e_j}\rangle
\right),
\]
Indeed,
\[
\nabla^2_{e_j,e_j}
\bigl(c^{T}\widetilde B^{-1}c\bigr)(p)
=
2\langle B^{-1}C_{e_j},C_{e_j}\rangle,
\]
since every term involving a derivative of $\widetilde B^{-1}$ contains
a factor $c$ and hence vanishes at $p$. Moreover,
\[
\nabla K(X,X)(p)=0,
\qquad
\nabla\det K(p)=0,
\]
so the drift terms vanish at $p$. Consequently,
\[
\frac{\mathscr L\det K}{2\det B}
=
\frac12\mathscr L K(X,X)
-
\sum_j\langle B^{-1}C_{e_j},C_{e_j}\rangle
=
E-\sum_j\langle B^{-1}C_{e_j},C_{e_j}\rangle.
\]
% The block determinant formula gives
% $$
% \frac{\mathscr L\det K}{2\det B}
% =
% E-\sum_j\langle B^{-1}C_{e_j},C_{e_j}\rangle.
% $$

Thus the determinant test contributes additional favorable negative quadratic terms involving the mixed derivatives of $K$. Since every term in the determinant sum is nonnegative, retaining only the term corresponding to $W=X$ gives
$$
\frac{\mathscr L\det K}{2\det B}
\le
E-\langle B^{-1}C_X,C_X\rangle.
$$
The first-contact condition and the third-order derivative commutation formula give
$$
C_X
=
\left(
R_X|_{X^\perp}
+2\psi\psi' I
+2\psi'B
\right)(qJX+Z).
$$
Using
$$
R_X|_{X^\perp}
=
I+3JX\otimes JX^\flat,
$$
together with $(\psi')^2+\psi\psi''\ge0$, the Cauchy--Schwarz inequality yields
$$
\langle B^{-1}C_X,C_X\rangle
+4\bigl((\psi')^2+\psi\psi''\bigr)
\langle B(qJX+Z),qJX+Z\rangle 
\ge
4\left(
\psi'+\sqrt{(\psi')^2+\psi\psi''}
\right)
\left[
(1+2\psi\psi')\tau+3x
\right].
$$

For the next calculation, set
$$
M:=R_X|_{X^\perp}+2\psi\psi' I, \quad V: = qJX+Z.
$$
Then
$$
C_X=MV+2\psi'BV,
$$
and therefore
$$
\langle B^{-1}C_X,C_X\rangle
=
\langle B^{-1}MV,MV\rangle
+4\psi'\langle MV,V\rangle
+4(\psi')^2\langle BV,V\rangle.
$$
On the other hand,
$$
|(\psi I+B)V|^2
=
\psi^2\tau
+2\psi\langle BV,V\rangle
+|BV|^2.
$$

Substituting this into \cref{eq:scalar-contact-E-cp2}, and using
$
a^2=s-\tau,
\quad
q^2=x,
$
we obtain
$$
\begin{aligned}
E
={}&
\psi^2+4\psi-\lambda-s+\tau+3x
-\psi'\bigl(4\psi^2+2\psi T+6s\bigr)
-2\psi''\psi^2s \\
&-\psi'|B|^2
-2\psi''|BV|^2
-3B(JX,JX)
-4\psi\psi''\langle BV,V\rangle.
\end{aligned}
$$
Hence
$$
\begin{aligned}
\frac{\mathscr L\det K}{2\det B}+\lambda
\le {}&
\psi^2+4\psi-s
-\psi'\bigl(4\psi^2+2\psi T+6s\bigr)
-2\psi''\psi^2s
+\tau+3x \\
&-\Bigl[
\psi'|B|^2
+2\psi''|BV|^2
+3B(JX,JX)
\Bigr] \\
&-\Bigl[
\langle B^{-1}MV,MV\rangle
+4\bigl((\psi')^2+\psi\psi''\bigr)\langle BV,V\rangle
\Bigr]
-4\psi'\langle MV,V\rangle.
\end{aligned}
$$
Since $(\psi')^2+\psi\psi''\ge0$ (\cref{eq:psi-combination-two}),
we have
$$
\begin{aligned}
&\langle B^{-1}MV,MV\rangle
+4\bigl((\psi')^2+\psi\psi''\bigr)\langle BV,V\rangle \\
& \qquad \geq 4 \sqrt{(\psi')^2+\psi\psi''} \sqrt{\left\langle B^{-1} M V, M V\right\rangle\langle B V, V\rangle}\\
&\qquad\ge
4\sqrt{(\psi')^2+\psi\psi''}\,
\langle MV,V\rangle.
\end{aligned}
$$
The first inequality follows from AM-GM, and the second inequality follows from Cauchy--Schwarz,
$$
\langle B^{-1}MV,MV\rangle
\langle BV,V\rangle
\ge
\langle MV,V\rangle^2.
$$
Since
$
R_X|_{X^\perp}
=
I+3JX\otimes JX^\flat,
$
we have
$$
\langle MV,V\rangle
=
(1+2\psi\psi')\tau+3x.
$$
Consequently,
$$
\begin{aligned}
\frac{\mathscr L\det K}{2\det B}+\lambda
\le {}&
\psi^2+4\psi-s
-\psi'\bigl(4\psi^2+2\psi T+6s\bigr)
-2\psi''\psi^2s \\
&+\Bigl[
1-
4\Bigl(\psi'
+\sqrt{(\psi')^2+\psi\psi''}\Bigr)
(1+2\psi\psi')
\Bigr]\tau \\
&+\Bigl[
3-
12\Bigl(\psi'
+\sqrt{(\psi')^2+\psi\psi''}\Bigr)
\Bigr]x \\
&-\Bigl[
\psi'|B|^2
+2\psi''|BV|^2
+3B(JX,JX)
\Bigr].
\end{aligned}
$$

Finally, $B$ is a symmetric operator on $X^\perp$ with $\operatorname{tr}B=T$. By the definition of $\mathcal P(\tau,x)$,
$$
\psi'|B|^2
+2\psi''|BV|^2
+3B(JX,JX)
\ge
\mathcal P(\tau,x).
$$

Replacing the right-hand side by its minimum therefore gives inequality~\ref{Ineq: L det K / 2det B}.

It remains to maximize the upper bound of \cref{Ineq: L det K / 2det B} over
\[
0\le x\le \tau\le s.
\]
By Lemma~\ref{lem:quadratic-minimization},
\[
\frac{\partial^2\mathcal P}{\partial x^2}\le0,
\qquad
\frac{d^2}{d\tau^2}\mathcal P(\tau,0)\le0,
\qquad
\frac{d^2}{d\tau^2}\mathcal P(\tau,\tau)\le0.
\]
Thus $\mathcal P(\tau,\cdot)$ is concave, and its restrictions to the
two edges $x=0$ and $x=\tau$ are concave in $\tau$.

For fixed $s>0$, set
\[
\lambda_0=1-\frac{\tau}{s},
\qquad
\lambda_1=\frac{\tau-x}{s},
\qquad
\lambda_2=\frac{x}{s}.
\]
Then $\lambda_i\ge0$, $\lambda_0+\lambda_1+\lambda_2=1$, and the
preceding concavity gives
\[
\mathcal P(\tau,x)
\ge
\lambda_0\mathcal P(0,0)
+\lambda_1\mathcal P(s,0)
+\lambda_2\mathcal P(s,s).
\]
Since the remaining part of the upper bound is affine in $(\tau,x)$,
the full upper bound of \cref{Ineq: L det K / 2det B}, denoted by $\mathcal Q(\tau,x)$, satisfies
\[
\mathcal Q(\tau,x)
\le
\lambda_0\mathcal Q(0,0)
+\lambda_1\mathcal Q(s,0)
+\lambda_2\mathcal Q(s,s).
\]
Hence
\[
\mathcal Q(\tau,x)
\le
\max\{
\mathcal Q(0,0),\mathcal Q(s,0),\mathcal Q(s,s)
\}.
\]
Thus it suffices to consider the three extremal configurations
\[
(\tau,x)=(0,0),\quad (s,0),\quad (s,s).
\]
\end{proof}
The three extremal configurations maximizing right-hand side of \cref{Ineq: L det K / 2det B} have a clean geometric interpretation. Since
$$
\grad f= df(X)X+ df(JX)JX+Z, \quad
s=a^2+q^2+|Z|^2,
\qquad
\tau=q^2+|Z|^2,
\qquad
x=q^2,
$$
they correspond respectively to
\begin{equation}
\begin{gathered}
(\tau, x)=(0,0) \quad \Longleftrightarrow \quad \nabla f=a X, \\
(\tau, x)=(s, 0) \quad \Longleftrightarrow \quad \nabla f= Z \perp\{X, J X\}, \\
(\tau, x)=(s, s) \quad \Longleftrightarrow \quad \nabla f=q JX .
\end{gathered}
\end{equation}
The preceding lemma shows that for fixed $s= |\grad f|^2$, it suffices to consider the three extreme orientations of $\nabla f$ relative to the complex line spanned by $X$ and $JX$.

\begin{proposition}[No interior first-contact]\label{prop: no interior first contact}
    Assume $\lambda\ge16$. Then an interior semi-positive null contact $(p, X)$ of $K$ is impossible.
\end{proposition}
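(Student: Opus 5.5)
We argue by contradiction. Suppose $(p,X)$ is an interior semi-positive null contact of $K$, i.e.\ $K\ge0$ near $p$ and $K(p)X=0$. Since $\lambda\ge16$, Lemma~\ref{lem:Simple nullity at an interior contact} gives $\dim\ker K(p)=1$, so $B=K|_{X^\perp}>0$. The last part of Lemma~\ref{lem: First Contact Principle} then yields $\Delta\det K(p)\ge0$. Because $\nabla\det K(p)=0$, this means $\mathscr L\det K(p)\ge0$. Dividing by $2\det B>0$, the left-hand side of \cref{Ineq: L det K / 2det B} is at least $\lambda\ge16$.

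It therefore suffices to prove that the right-hand side $\mathcal Q(\tau,x)$ of \cref{Ineq: L det K / 2det B} is strictly less than $\lambda$ for every $s>0$ and every $\lambda\ge16$. By Lemma~\ref{lem: det reduction}, it is enough to check the three vertices $(\tau,x)=(0,0)$, $(s,0)$ and $(s,s)$. At each vertex the vector $V=qJX+Z$ is determined up to rotation: $V=0$, $V\perp\{X,JX\}$ with $|V|^2=s$, or $V=\sqrt s\,JX$. So $\mathcal P$ becomes an explicit finite-dimensional quadratic minimization over symmetric $C$ on the $3$-dimensional space $X^\perp$ with $\operatorname{tr}C=T=\lambda+s-4\psi$. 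I would carry this out as in Lemma~\ref{lem:quadratic-minimization}. Restrict to $C$ diagonal in a frame adapted to $JX$, $V$ and the remaining direction, and use a Lagrange multiplier for the trace constraint. This gives closed forms of the type
\[
\mathcal P(0,0)=\frac{\psi' T^2}{3}-\frac{3}{4\psi'}\cdot\frac{\ldots}{\ldots},
\]
with analogous expressions at the other two vertices. The effective coefficient in the $V$-direction is $\psi'+2s\psi''>0$ by \cref{eq:psi-combination-one}, so each quadratic stays convex in $C$ and the minimum exists.

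After substitution, each vertex inequality $\mathcal Q<\lambda$ becomes a scalar inequality in the two variables $(s,\lambda)$. The dependence on $\lambda$ enters only through $T$ and the term $-\lambda$, namely as $-\lambda-2\psi\psi' T-c\,\psi' T^2+(\text{linear in }T)$ with $c>0$. This is concave and decreasing in $\lambda$ once $T$ is large, so it should suffice to verify the inequality at $\lambda=16$, after checking that the derivative in $\lambda$ is negative for all $\lambda\ge16$.

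This leaves three one-variable inequalities in $s\in(0,\infty)$. I would handle them in the variable $r=\sqrt{1+s}$, using \cref{eq:psi-r,eq:psi-prime,eq:psi-second}, and treat two regimes separately:
\begin{itemize}
\item For small $s$, $\psi$ and $\psi'$ are tiny, so the constant term $\psi^2+4\psi-\lambda\approx-16$ dominates the positive contributions $\tau+3x\le4s$ and $-s$. This regime should be easy.
\item For large $s$, $\psi\sim\sqrt s$ and $\psi'\sim\tfrac12 s^{-1/2}$, so $4(\psi'+\sqrt{(\psi')^2+\psi\psi''})(1+2\psi\psi')\gtrsim 4\psi'\cdot2\psi\psi'\to\text{const}$. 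I must check that the coefficients of $\tau$ and $x$ eventually become nonpositive, or are absorbed by $-s$ and by $-\psi'T^2$-type terms of order $\sqrt s$.
\end{itemize}

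The main obstacle is the intermediate range $s\approx 1$--$50$. There neither asymptotic dominates, $\psi''$ changes sign at $s=8$, and the $x$-coefficient $3-12(\psi'+\sqrt{\cdots})$ is positive. At the vertex $(s,s)$, this $+3s$ gain from the curvature-$4$ direction must be beaten by the constant $-\lambda+\psi^2+4\psi$ together with $-\psi'T^2$-type terms. I would first try crude monotone bounds on $\psi,\psi'$ over a few subintervals, in the style of the auxiliary barrier inequality (Lemma~\ref{lem:Auxiliary barrier inequality}). If margins are thin, I would fall back on a rigorous interval-arithmetic or Sturm-type check of the resulting explicit rational-logarithmic functions of $r$, recorded in the appendix.
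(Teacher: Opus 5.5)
\textbf{There is a genuine gap in the small-$s$ regime, and it breaks the plan of checking the three vertices for every $s>0$.} The quantity $\mathcal P$ is a minimum over \emph{all} symmetric $C$ on $X^\perp$ with $\tr C=T$. The positivity $B\ge0$ coming from $K\ge0$, in particular $B(JX,JX)\ge0$, is discarded once $-3B(JX,JX)$ is absorbed into $\mathcal P$. When $\psi'$ is small, the linear term $3C(JX,JX)$ can be made very negative at little quadratic cost. Explicitly, $\mathcal P(0,0)=\frac{\psi'T^2}{3}+T-\frac{3}{2\psi'}$, and your own placeholder already carries this $1/\psi'$. Since $\psi'(s)\sim s^2/8$ as $s\to0$, one gets $\mathcal Q(0,0)=\frac{3}{2\psi'}-\lambda+o(1)\sim 12/s^2\to+\infty$. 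Numerically, for $\lambda=16$ and $s=1$ one finds $\mathcal Q(0,0)\approx 28.5>\lambda$.

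The configuration $(\tau,x)=(0,0)$, i.e.\ $\nabla f\parallel X$, can genuinely occur at a contact point, so \cref{Ineq: L det K / 2det B} gives no contradiction there. The other two vertices blow up the same way: the coefficient of the $JX$-entry of $C$ in the quadratic form is $\psi'$ or $\psi'+2s\psi''$, both $O(s^2)$. Your heuristic that $\psi^2+4\psi-\lambda\approx-16$ dominates only tracked the terms outside $-\mathcal P$. So the claim $\mathcal Q<\lambda$ for all $s>0$ is false, and no interval-arithmetic check will verify it.

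\textbf{The missing idea is to treat small gradients without the relaxed determinant bound.} The paper splits at $s=8$, where $\psi''$ changes sign. For $0\le s\le 8$ it uses only the scalar first-contact inequality $E=\frac12\mathscr L K(X,X)\ge0$ from \cref{eq:scalar-contact-E-cp2}. There it keeps $B(JX,JX)\ge0$, which is exactly the information $\mathcal P$ throws away, and drops the $-2\psi''(\cdots)$ term because $\psi''\ge0$. It also uses $q^2\le s$, $|H|^2\ge(\tr H)^2/4=(\lambda+s)^2/4$ and $\psi^2+4\psi\le\frac92 s\psi'$. This gives $E\le -16+3s-\psi'\bigl(\frac{(16+s)^2}{4}+\frac32 s\bigr)$, which a polynomial check in $y=\sqrt{1+s}-1\in[0,2]$ shows is negative.

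Only for $s\ge 8$, where $\psi''\le0$, does the paper invoke simple nullity, Lemma~\ref{lem: det reduction}, and the vertex bounds of Lemma~\ref{lem:three vertices ineq}. Those bounds are proved for $r=\sqrt{1+s}\ge3$ and in fact give $\mathcal Q<0$. With this split, your reduction to $\lambda=16$ and your treatment of the vertices in the variable $r$ match the paper, and the intermediate-range difficulty you anticipated largely disappears. Without the split the argument does not close. You could instead impose $C\ge0$ in the definition of $\mathcal P$, but then the closed forms and concavity from Lemma~\ref{lem:quadratic-minimization} that drive the vertex reduction are no longer available.
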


\begin{proof}
    We first exclude the range $0\leq s\le8$ from elementary properties of the scalar barrier (Lemma~\ref{lem:cp2-barrier-properties}). In this range $\psi''\ge0$. Since $K\geq 0$,
$$
q^2\le s,
\qquad
B(JX,JX)\ge0,
$$
and
$$
|H|^2\ge\frac{(\operatorname{tr}H)^2}{4}
=\frac{(\lambda+s)^2}{4}.
$$

Hence \cref{eq:scalar-contact-E-cp2} gives
$$
\frac12\mathscr LK(X,X)
\le
\psi^2+4\psi-\lambda+3s
-\psi'
\left(
\frac{(\lambda+s)^2}{4}+6s
\right).
$$

The right-hand side is decreasing in $\lambda$, so it is enough to set $\lambda=16$ by the spectral bound (Lemma~\ref{lem:spectral bound kappa convex}).
For $0\le s\le8$, we have $\psi''\ge0$, so $\psi'$ is increasing.
Since $\psi(0)=0$,
\[
\psi(s)=\int_0^s\psi'(t)\,dt\le s\psi'(s).
\]
Moreover, $\psi\le\psi(8)<\frac12$, and therefore
\[
\psi^2+4\psi\le\frac92\psi\le\frac92s\psi'.
\]
Using the preceding estimate with $\lambda=16$, we obtain
\[
\frac12\mathscr LK(X,X)
\le
-16+3s
-\psi'
\left(
\frac{(16+s)^2}{4}+\frac32s
\right).
\]
Set
\[
r=\sqrt{1+s},
\qquad
y=r-1\in[0,2].
\]
Since
\[
\psi'=\frac{(r-1)^2}{2r^3}= \frac{y^2}{2(y+1)^3},
\]
the right-hand side equals
\[
-\frac{
y^6-20y^5-78y^4-12y^3+472y^2+336y+128
}{
8(1+y)^3
}.
\]
For $0\le y\le2$,
\[
-20y^5-78y^4\ge-118y^4\ge-472y^2,
\qquad
-12y^3\ge-48y,
\]
so the numerator is bounded below by
\[
y^6+288y+128>0.
\]
Hence
\[
\frac12\mathscr LK(X,X)<0
\]
for all $0\le s\le8$.

It remains to consider $s\ge8$. In this case $\psi''\le0$. By Lemma~\ref{lem:Simple nullity at an interior contact}, the nullity is simple, hence $B>0.$ By Lemma~\ref{lem: det reduction}, at a simple null contact the upper bound for
$$
\frac{\mathscr L\det K}{2\det B}+\lambda
$$

attains its maximum at one of
$$
(\tau,x)=(0,0),\qquad (s,0),\qquad (s,s).
$$

For each of these three configurations, substitution of
$
\psi(s)=\frac{s}{\sqrt{1+s}}-\log(1+s)
$
shows that the corresponding upper bound is strictly negative whenever $\lambda\ge16$ and $s\ge8$. The verification is a one-dimensional calculus problem. See Lemma~\ref{lem:three vertices ineq}.

Consequently,
$$
\frac{\mathscr L\det K}{2\det B}<0.
$$

At an interior simple null contact, however, $\det K$ has a local minimum equal to zero, and hence
$$
\mathscr L\det K=\Delta\det K\ge0,
$$
a contradiction.

Therefore no interior semipositive null contact of $K$ can occur.
\end{proof}

\section{Proof of the gradient-dependent Hessian estimate}\label{sec:Proof of first main theorem}
In this section we prove \cref{eq:main-gradient-bound}. Our strategy is similar to \cite{WeiXiao2025, KNTW}. Start with a 1-convex domain $\Omega$ in $\CP^2.$ We use mean curvature flow (MCF) to deform \(\Omega\) to a sufficiently small domain that is \(C^\infty\)-close to a geodesic ball.
\begin{lemma}[Preservation of \(\kappa\)-convexity by MCF]\label{lem:cpn-kappa-convex-MCF}
    Let $M_t\subset\CP^2$ be a smooth mean curvature flow of closed hypersurfaces, with the normal chosen so that the second fundamental form $\II$ is positive on convex hypersurfaces. If $\II > \kappa g$ at $t=0$, then $\II > \kappa g$ for every later time for which the flow exists.
\end{lemma}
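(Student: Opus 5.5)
We apply the parabolic maximum principle to the tensor $S:=h-\kappa g$ on $M_t$, where $h$ is the second fundamental form (shape operator) of $M_t$. The argument is Hamilton's tensor maximum principle applied to the evolution equation of $h$ under MCF in a locally symmetric ambient space; this equation was written down by Huisken. Since $\nabla R=0$ on $\CP^2$, the evolution equation reads
\[
\partial_t h_{ij}=\Delta h_{ij}-2Hh_{ip}h^p{}_j+|A|^2h_{ij}+\mathcal R_{ij}(h,\bar R),
\]
where $\mathcal R_{ij}$ is linear in $h$ with coefficients built from the ambient curvature restricted to $M_t$ and the normal $\nu$. The derivative-of-curvature terms drop out because $\bar\nabla \bar R=0$. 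Since $g$ is also time-dependent, with $\partial_t g_{ij}=-2Hh_{ij}$, I will work with the Weingarten map $h^i{}_j$. Its evolution has reaction term $|A|^2h^i{}_j+\mathcal R^i{}_j$.

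By Hamilton's maximum principle, it suffices to show the following null-eigenvector condition. Suppose $S\ge0$ at a point and $e_1$ is a unit null vector, so that $h(e_1,\cdot)=\kappa\langle e_1,\cdot\rangle$ and $h\ge\kappa$. Then the reaction term evaluated at $(e_1,e_1)$ must be $>0$, or at least $\ge0$ with strict preservation coming from the strict initial inequality. I will fix an orthonormal frame $e_1,e_2,e_3$ of $TM$ diagonalizing $h$, with eigenvalues $\kappa=\mu_1\le\mu_2\le\mu_3$. The term $|A|^2\kappa$ is positive. The curvature term I will compute from the explicit formula for $R$. Huisken's curvature term has the form
\[
2\sum_k\bigl(\bar R_{1k1k}\mu_k-\bar R_{1k1k}\mu_1\bigr)+\text{terms involving }\bar R_{\nu1\nu1}\mu_1,\ \bar R_{\nu k\nu k}\mu_1 ,
\]
together with pieces like $-2\sum_k \bar R_{k1k\nu}$-type terms. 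Those pieces vanish, or combine, because $\nabla R=0$ and by the Codazzi structure.

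The key step is to check sign and size in this expression. The terms $2\bar R_{1k1k}(\mu_k-\mu_1)$ are nonnegative, since $\sec\ge1>0$ and $\mu_k\ge\mu_1$. The remaining terms are multiples of $\mu_1=\kappa$ times Ricci-type curvature quantities, for instance $\kappa\,(\overline{\Ric}(\nu,\nu)-\ldots)$, and their sign depends on the exact formula. The zeroth-order part ($|A|^2\kappa$ plus $\kappa$ times curvatures) must beat the linear terms. I would carefully rederive the evolution of $h$ at the null point using the Gauss and Codazzi equations, Simons' identity and the explicit $R$ of $\CP^2$, inserting $J\nu$ and its projections onto $e_1,e_2,e_3$. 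I expect this to be the main obstacle. Huisken's general term includes $-\mu_1\bar R_{\nu 1\nu1}$-type negative contributions of size up to $4\kappa$, whereas $|A|^2\kappa\ge 3\kappa^3$. For small $\kappa$ this bound might fail, so in that case one would need the positive gradient-of-$h$ terms or the specific Kähler structure. For $\kappa=1$ I would try to show the total equals $2\sum_k \bar R_{1k1k}(\mu_k-\kappa)+\kappa(|A|^2-c)$ with $|A|^2\ge 3$ dominating; if this fails, I would fall back on the argument in \cite{WeiXiao2025} for the analogous horo-convex case.

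Once the null-vector condition is verified, the strict inequality $\II>\kappa g$ at $t=0$ together with the maximum principle on the closed hypersurfaces $M_t$ gives $\II>\kappa g$ for all later times by a first-time argument. This is the same structure as Lemma~\ref{lem: First Contact Principle}, applied to the tensor $S$.
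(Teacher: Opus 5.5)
Your setup is the same as the paper's: a maximum principle for $S=\II-\kappa g$ via Huisken's evolution equation, using $\bar\nabla\bar R=0$ and $\partial_t g_{ij}=-2H\II_{ij}$. But the one substantive step is never carried out, namely the sign of the reaction term of $S$ at a null eigenvector. The heuristic you give for it also points the wrong way. You expect a leftover negative contribution of the form $-\kappa\sec(e_1,\nu)$, of size up to $4\kappa$, to be absorbed by $\kappa|\II|^2\ge3\kappa^3$. At $\kappa=1$ that comparison is $3$ against $4$ and fails, so the route you sketch for $\kappa=1$ does not close, and the fallbacks you list are not specified. As written, the proposal is a plan whose decisive inequality is missing.

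Here is the computation you need. Diagonalize $\II$ at the first null point, with $\mu_1=\kappa\le\mu_2,\mu_3$.
\begin{enumerate}
\item The term $-2H\mu_1^2$ in $\partial_t h_{11}$ cancels the $+2\kappa H\mu_1$ coming from $-\kappa\,\partial_t g_{11}$.
\item The $+H\sec(e_1,\nu)$ in $\partial_t h_{11}=\nabla_1\nabla_1H-H\mu_1^2+H\sec(e_1,\nu)$ cancels the opposite term produced by Simons' identity.
\item With $\bar\nabla\bar R=0$, the remaining curvature terms of Huisken's formula at $(e_1,e_1)$ are $2\sum_k\mu_k\sec(e_1,e_k)-2\kappa\sum_{k\ge2}\sec(e_1,e_k)+\kappa\Ric(\nu,\nu)$.
\end{enumerate}
Writing $\mu_k=\kappa+(\mu_k-\kappa)$ in the last expression gives
\[
(\partial_t-\Delta)S_{11}=\kappa\bigl(|\II|^2+\Ric(\nu,\nu)\bigr)+2\sum_{\alpha=2}^{3}(\mu_\alpha-\kappa)\sec(e_1,e_\alpha),
\]
which is exactly the identity the paper uses. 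There is no negative residue. On $\CP^2$ we have $\Ric(\nu,\nu)=6$ and $\sec\ge1$, so the right side is strictly positive for every $\kappa>0$. This contradicts $(\partial_t-\Delta)S_{11}\le0$ at a first null point. No smallness of $\kappa$, no gradient terms and no Kähler structure are needed.

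Strictness also matters for your closing step. If the reaction term were only $\ge0$, a first-time argument from the strict initial inequality would not by itself give strict preservation. You would need the strong maximum principle or a perturbation argument.
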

\begin{proof}
Set
$$
S:=\II-\kappa g.
$$
Let $H= \tr \II$ denote the mean curvature of the hypersurface.
Suppose that positivity of $S$ fails for the first time at $(p,t_*)$. Choose an orthonormal frame diagonalizing $\II$ at this point with eigenvalues $\mu_1, \mu_2, \mu_3.$ Assume without loss of generality that $e_1$ is a null direction of $S$. Then
$$
\mu_1=\kappa,
\qquad
\mu_2,\mu_3\ge \kappa.
$$
Since $\CP^2$ is locally symmetric, the curvature-derivative terms in Huisken's evolution equation \cite{Huisken1986} vanish. Combining \cite[Theorem~3.4]{Huisken1986} with
$$
\frac{\partial}{\partial t}g_{ij}=-2H\II_{ij},
$$
and evaluating in the $e_1$-direction gives
$$
\left(\frac{\partial}{\partial t}-\Delta\right)S_{11}
=
\kappa(|\II|^2+\Ric(\nu,\nu))
+
2\sum_{\alpha=2}^3
(\mu_\alpha-\kappa)\sec(e_1,e_\alpha).
$$
On $\CP^2$,
$
\operatorname{Ric}(\nu,\nu)=6,
\quad
\sec\ge1,
$
so
$$
\left(\frac{\partial}{\partial t}-\Delta\right)S_{11}>0.
$$
At a first null point of the positive tensor $S$, however, the parabolic maximum principle gives
$$
\left(\frac{\partial}{\partial t}-\Delta\right)S_{11}\le0,
$$
a contradiction. Hence $S\geq 0$ is preserved.
\end{proof}

We next show that, near the extinction time, the mean curvature flow produces a sufficiently small nearly-round domain whose first Dirichlet eigenfunction satisfies \(K>0\).

\begin{lemma}[Initial log-concavity via MCF]
\label{lem:late-time-initialization}
Let $\{\Omega_t\}_{0\le t<T}$ be the domains enclosed by the mean
curvature flow starting from a smooth $1$-convex domain
$\Omega_0\subset\CP^2$. For each $t$, let $u_t>0$ be the first
Dirichlet eigenfunction of $\Omega_t$, and set
\[
f_t=-\log u_t,
\qquad
K_t=\nabla^2 f_t-\psi(|\nabla f_t|^2)g.
\]
Then
\[
K_t>0
\]
on $\Omega_t$ for all $t$ sufficiently close to the extinction time
$T$.
\end{lemma}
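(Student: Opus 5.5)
The approach is to rescale the shrinking domains to unit size and compare with the Euclidean ball. By \cref{lem:cpn-kappa-convex-MCF}, the domains $\Omega_t$ remain $1$-convex. By Huisken's theorem, adapted to ambient manifolds with pinched curvature, convex hypersurfaces shrink to a round point. Consequently, if $r(t)\to0$ denotes the inradius, the rescaled domains $r(t)^{-1}\exp_{p_t}^{-1}(\Omega_t)$ converge in $C^\infty$ to the unit ball $B_1\subset\RR^4$, and the rescaled metrics $r^{-2}g$ converge to the flat metric in $C^\infty$ on compact sets.

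The plan is to transfer a strict Euclidean estimate through this rescaling. Let $\tilde u$ and $\tilde f$ denote the rescaled eigenfunction and its logarithm. For the ball $B_1$, the eigenfunction $u_B=|x|^{-1}J_1(j|x|)$ is radial, and it is known to be strongly log-concave, with $\nabla^2(-\log u_B)\ge c_0 g_{\mathrm{euc}}>0$ on $B_1$. Near $\partial B_1$ it blows up like $\rho^{-2}$ in the normal direction and $\rho^{-1}$ in the tangential directions, consistent with the boundary analysis of \cref{lem:cp2-K-boundary}. Undoing the scaling, we have $\nabla^2 f_t=r^{-2}\,\widetilde{\nabla^2\tilde f}$ and $|\nabla f_t|^2=r^{-2}|\nabla\tilde f|^2$. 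The barrier therefore becomes $\psi(r^{-2}\tilde s)\le r^{-1}\sqrt{\tilde s}$, which uses $\psi(s)\le \sqrt s$ from \cref{eq:psi-r}. Hence
\[
r^2K_t\ge \widetilde{\nabla^2\tilde f}-r\sqrt{\tilde s}\,\tilde g .
\]
On compact interior subsets of the rescaled domain, $\tilde f\to -\log u_B$ in $C^2$ by elliptic perturbation, since $\lambda_1$ is simple. The first term is therefore at least $c_0/2$, while $r\sqrt{\tilde s}$ is $O(r)\to0$. It follows that $K_t>0$ in the interior for small $r$.

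Near the boundary, \cref{lem:cp2-K-boundary} does not apply directly, because its collar is uniform only for compact families, whereas here the family degenerates as $t\to T$. I would redo its computation in rescaled coordinates. In the normal direction, $r^2K(\nu,\nu)\approx\tilde\rho^{-2}-r\tilde\rho^{-1}>0$. In tangential directions, $r^2K(T,T)\approx \tilde{\II}/\tilde\rho-r/\tilde\rho+O(1)$. The rescaled boundaries converge smoothly to the unit sphere, so $\tilde{\II}\to1\cdot\tilde g$, and the rescaled second fundamental form stays near $1$ while $r\ll1$. This gives positivity in a collar whose width is uniform in the rescaled picture. Equivalently, in original units $\II_{\partial\Omega_t}\sim r^{-1}\gg1$, so the estimate is far from the borderline $\II=g$ case. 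The mixed terms are handled by the same Schur complement argument as in \cref{lem:cp2-K-boundary}.

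The main obstacle is obtaining uniform $C^2$ control of $\tilde u$ up to the boundary along the rescaled family. Specifically, one needs the factorization $\tilde u=\tilde\rho\,\tilde a$ with $\tilde a$ bounded below and bounded in $C^2$ uniformly as $t\to T$. I would derive this from the $C^\infty$ convergence of the rescaled boundaries, using Schauder boundary estimates on uniformly smooth domains, together with Hopf's lemma applied to the limit ball. With that in hand, the interior and collar regions overlap, and therefore $K_t>0$ on all of $\Omega_t$ for $t$ close to $T$.
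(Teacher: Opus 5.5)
Your proposal is correct and follows essentially the same route as the paper. You rescale via Huisken's theorem to a nearly Euclidean unit ball, use $\psi(s)\le\sqrt{s}$ to bound the rescaled barrier by $r\sqrt{\tilde s}$, and then combine strict log-concavity of the ball eigenfunction in the interior with the factorization $\tilde u=\tilde\rho\,\tilde a$ (with uniform $C^2$ control from boundary regularity on the converging domains) near the boundary. The differences are cosmetic: the paper pulls back to the fixed ball and uses the strictly concave defining function $1-|x|^2$. That gives the isotropic bound $\nabla^2\tilde f\ge(1/\rho-C)\tilde g$ directly, so the paper needs neither your normal/tangential splitting and Schur complement nor uniform control of the distance function to a moving boundary.
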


\begin{proof}
Since $\CP^2$ has nonnegative sectional curvature and $\nabla R=0$,
Huisken's convergence theorem \cite[Theorem~1.1]{Huisken1986} applies to the strictly convex
hypersurfaces $\partial\Omega_t$. Thus the flow contracts in finite time to
a point $p\in\CP^2$, and, after rescaling in normal coordinates about
$p$, the hypersurfaces converge smoothly to a round sphere.

More precisely, there exist scales $\varepsilon_t\to0$ such that,
after adjusting $\varepsilon_t$ by a fixed constant, the rescaled
hypersurfaces
\[
\widehat M_t
:=
\varepsilon_t^{-1}
\exp_p^{-1}(\partial\Omega_t)
\subset T_p\CP^2\simeq\mathbb R^4
\]
converge in $C^\infty$ to the unit sphere $\mathbb S^3$. In
particular, for $t$ sufficiently close to $T$, $\widehat M_t$ is a
small normal graph over $\mathbb S^3$. If $\widehat\Omega_t$ denotes
the region enclosed by $\widehat M_t$, there are diffeomorphisms from the unit ball $B_1$ to $\widehat\Omega_t$
\[
\Psi_t:\overline{B_1}\longrightarrow\overline{\widehat\Omega_t}
\]
such that
\[
\Psi_t\longrightarrow\Id
\qquad\text{in }C^\infty(\overline{B_1}).
\]
Indeed, the normal-graph parametrizations near $\partial B_1$ may be
extended to the enclosed domains by taking the identity on a smaller
ball and interpolating in a fixed collar.

Define
\[
\Phi_t
:=
\exp_p\circ D_{\varepsilon_t}\circ\Psi_t:
\overline{B_1}\longrightarrow\overline{\Omega_t},
\qquad
D_{\varepsilon_t}(x)=\varepsilon_t x,
\]
and put
\[
\widetilde g_t
:=
\varepsilon_t^{-2}\Phi_t^*g.
\]
In normal coordinates centered at $p$,
\[
g_{ij}(x)=\delta_{ij}+O(|x|^2),
\]
with the corresponding estimates for all derivatives. Hence
\[
\varepsilon_t^{-2}
(\exp_p\circ D_{\varepsilon_t})^*g
\longrightarrow g_{\mathrm E}
\qquad\text{in }C^\infty
\]
on every fixed bounded set. Since $\Psi_t\to\Id$ smoothly, it follows
that
\begin{equation}
\label{eq:rescaled-metric-convergence}
\widetilde g_t
\longrightarrow g_{\mathrm E}
\qquad\text{in }C^\infty(\overline{B_1}).
\end{equation}

Let
\[
\widetilde u_t:=u_t\circ\Phi_t,
\qquad
\widetilde f_t:=-\log\widetilde u_t.
\]
After normalizing $\widetilde u_t$ in $L^2(B_1,\widetilde g_t)$, it
satisfies
\[
-\Delta_{\widetilde g_t}\widetilde u_t
=
\widetilde\lambda_t\widetilde u_t,
\qquad
\widetilde u_t|_{\partial B_1}=0,
\qquad
\widetilde\lambda_t
=
\varepsilon_t^2\lambda_1(\Omega_t).
\]
By \eqref{eq:rescaled-metric-convergence}, simplicity of the first
Dirichlet eigenvalue, and standard elliptic boundary regularity,
\[
\widetilde u_t\longrightarrow u_{\mathrm E}
\qquad\text{in }C^3(\overline{B_1}),
\]
where $u_{\mathrm E}$ is the normalized first Dirichlet eigenfunction
of the Euclidean unit ball.

We claim that there is a constant $c>0$, independent of $t$ sufficiently
close to $T$, such that
\begin{equation}
\label{eq:rescaled-log-hessian}
\nabla_{\widetilde g_t}^2\widetilde f_t
\ge
c\bigl(1+|\nabla\widetilde f_t|_{\widetilde g_t}\bigr)
\widetilde g_t
\qquad\text{on }B_1.
\end{equation}
To see this near the boundary, use the fixed defining function
\[
\rho(x)=1-|x|^2.
\]
By the Hopf boundary lemma and elliptic boundary regularity,
\[
\widetilde u_t=\rho a_t,
\]
where, uniformly for $t$ close to $T$,
\[
a_t\ge c_0>0,
\qquad
\|a_t\|_{C^2(\overline{B_1})}\le C_0.
\]
Since
\[
-\nabla_{g_{\mathrm E}}^2\rho=2g_{\mathrm E},
\]
the convergence \eqref{eq:rescaled-metric-convergence} implies
\[
-\nabla_{\widetilde g_t}^2\rho\ge\widetilde g_t
\]
for $t$ sufficiently close to $T$. Therefore
\[
\nabla_{\widetilde g_t}^2\widetilde f_t
=
\frac{d\rho\otimes d\rho}{\rho^2}
-
\frac{\nabla_{\widetilde g_t}^2\rho}{\rho}
-
\nabla_{\widetilde g_t}^2\log a_t
\ge
\left(\frac1\rho-C\right)\widetilde g_t,
\]
while
\[
|\nabla\widetilde f_t|_{\widetilde g_t}
\le
\frac{C}{\rho}.
\]
Hence \eqref{eq:rescaled-log-hessian} holds in a fixed boundary
collar.

On the complement of this collar, $u_{\mathrm E}$ is bounded away
from zero. The Euclidean first eigenfunction on a Euclidean ball is strictly
log-concave, so
\[
\nabla_{g_{\mathrm E}}^2(-\log u_{\mathrm E})>0.
\]
The $C^3$-convergence above then gives
\eqref{eq:rescaled-log-hessian} on the remaining compact subset,
after decreasing $c$ if necessary.

Set
\[
\widetilde w_t
=
|\nabla\widetilde f_t|_{\widetilde g_t}.
\]
Since $\Phi_t^*g=\varepsilon_t^2\widetilde g_t$, constant rescaling
does not change the Levi--Civita connection, and therefore
\[
\Phi_t^*K_t
=
\nabla_{\widetilde g_t}^2\widetilde f_t
-
\varepsilon_t^2
\psi\bigl(\varepsilon_t^{-2}\widetilde w_t^2\bigr)
\widetilde g_t.
\]
From the definition of $\psi$,
\[
0\le\psi(s)\le\sqrt{s}
\qquad(s\ge0).
\]
Thus
\[
\varepsilon_t^2
\psi\bigl(\varepsilon_t^{-2}\widetilde w_t^2\bigr)
\le
\varepsilon_t\widetilde w_t.
\]
Using \eqref{eq:rescaled-log-hessian}, we obtain
\[
\Phi_t^*K_t
\ge
\left[
c(1+\widetilde w_t)
-\varepsilon_t\widetilde w_t
\right]\widetilde g_t.
\]
Since $\varepsilon_t\to0$, for all $t$ sufficiently close to $T$ we
have $\varepsilon_t\le c/2$, and hence
\[
\Phi_t^*K_t
\ge
c\,\widetilde g_t>0.
\]
Therefore $K_t>0$ on $\Omega_t$ for all sufficiently late times.
\end{proof}

% We follow the same strategy as \cite{KNTW}. Evolve \(\partial\Omega\) by mean curvature flow. By Huisken's convex contraction theorem
% \cite{Huisken1986}, the flow contracts in finite time to a point and
% becomes asymptotically round. Stop the flow sufficiently close to the
% extinction time and reverse the resulting smooth family. This gives a
% smooth deformation
% $\{\Omega_t\}_{0\le t\le1},$
% where $\Omega_1=\Omega$
% and \(\Omega_0\) is sufficiently small and \(C^\infty\)-close to a
% geodesic ball. Lemma~\ref{lem:cp2-K-small-ball} shows $K>0$ for small balls; and $K>0$ is preserved under smooth perturbation. Hence $K>0$ on $\Omega_0.$

We now complete the deformation argument. Starting from $\Omega$, evolve
$\partial\Omega$ by mean curvature flow and denote the enclosed domains by
\[
\{\Omega_\tau\}_{0\le \tau<T},
\qquad
\Omega_0=\Omega,
\]
where $T$ is the extinction time. By Lemma~\ref{lem:late-time-initialization}, we may choose
$\tau_0<T$ sufficiently close to $T$ so that the modified Hessian on
$\Omega_{\tau_0}$ is positive definite.

We then traverse the finite smooth family
$\{\Omega_\tau\}_{0\le\tau\le\tau_0}$ in the reverse direction. More
precisely, define
\[
\widetilde\Omega_t
:=
\Omega_{(1-t)\tau_0},
\qquad
0\le t\le1.
\]
Thus
\[
\widetilde\Omega_0=\Omega_{\tau_0},
\qquad
\widetilde\Omega_1=\Omega.
\]
This is only a reversal of the already constructed smooth family, not a
backward mean curvature flow. By Lemma~\ref{lem:cpn-kappa-convex-MCF}, every
$\widetilde\Omega_t$ remains $1$-convex. Relabeling
$\widetilde\Omega_t$ as $\Omega_t$, we therefore obtain a smooth
deformation
\[
\{\Omega_t\}_{0\le t\le1},
\qquad
\Omega_1=\Omega,
\]
such that $K_0>0.$

For each $t\in[0,1]$, let $u_t>0$ be the normalized first Dirichlet
eigenfunction of $\Omega_t$, and set
\[
f_t=-\log u_t,
\qquad
s_t=|\nabla f_t|^2,
\qquad
K_t=\nabla^2f_t-\psi(s_t)g.
\]
By Lemma~\ref{lem:cp2-K-boundary},
$  K_t>0$
in a uniform boundary collar for every \(t\).
Now Lemma~\ref{lem:spectral bound kappa convex} gives $\lambda_t\geq16$ for every $t$, and we can then apply all the interior estimate lemmas thereafter.

Suppose that positivity of \(K_t\) fails. By simplicity of the first
Dirichlet eigenvalue, standard elliptic perturbation theory gives smooth
dependence of \(u_t\), after pulling the domains back to a fixed reference
domain. Thus there is a first time \(t_*\) at which
$  K_{t_*}\geq0$
but \(K_{t_*}\) has a nontrivial kernel. The uniform boundary estimate puts
the contact point in the interior. However, Proposition~\ref{prop: no interior first contact} rules out any interior first-contact pair $(p,X).$ This proves \cref{eq:main-gradient-bound} by contradiction.

\section{A uniform Hessian lower bound and applications}
\label{sec:uniform-hessian-bound}

The estimate \cref{eq:main-gradient-bound} is strongest when
$s=|\grad f|^2$ is large, while its right-hand side vanishes at $s=0$.
We now combine it with the spectral bound in
\cref{lem:spectral bound kappa convex} and the determinant argument of
\cref{lem: det reduction} to obtain a uniform lower bound for $H=\nabla^2 f$. Together with \cref{eq:main-gradient-bound}, we complete the \cref{thm: main}.

\begin{theorem}[Uniform Hessian lower bound]
\label{prop:uniform-Hessian-lower-bound}
Let $\Omega\subset\CP^2$ be a connected $1$-convex domain with smooth
boundary, and let $u>0$ be its first Dirichlet eigenfunction and set $f:=-\log u$. Then
\[
    \nabla^2 f>\frac85 g
    \qquad\text{in }\Omega.
\]
\end{theorem}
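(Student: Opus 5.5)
We run the same deformation and first-contact scheme as in \cref{sec:Proof of first main theorem}, now for the constant-shift tensor
\[
    \widehat K := \nabla^2 f-\tfrac85\,g .
\]
The first contact principle (\cref{lem: First Contact Principle}) requires three ingredients: positivity at the initial time, positivity in a uniform boundary collar, and exclusion of an interior first contact.

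\textbf{Initial and boundary positivity.} We use the reversed mean curvature flow family $\{\Omega_t\}$ already constructed. Every $\Omega_t$ is $1$-convex, so $\lambda_t\ge16$ by \cref{lem:spectral bound kappa convex}.

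At $t=0$ the rescaling argument of \cref{lem:late-time-initialization} gives $\nabla^2_{\widetilde g}\widetilde f\ge c\,\widetilde g$. Undoing the scaling, $\nabla^2 f\ge c\,\varepsilon_t^{-2}g$, which exceeds $\tfrac85 g$ for late times.

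Near the boundary, the expansion \eqref{eq:H-boundary-cp2} gives
\[
    \nabla^2 f(T,T)\ge \rho^{-1}-C, \qquad \nabla^2 f(\nu,\nu)\sim\rho^{-2},
\]
so $\widehat K>0$ on a uniform collar. Alternatively, it is enough to note that $\psi(s)\to\infty$ as $s\to\infty$. Hence wherever $\psi(s)\ge 8/5$, the already-proved bound \eqref{eq:main-gradient-bound} gives $\widehat K>0$ outright. This leaves only the region $s< s_0$, where $s_0$ is defined by $\psi(s_0)=8/5$ (roughly $s_0\approx 18$).

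\textbf{Interior contact analysis.} Suppose $(p,X)$ is a first null contact of $\widehat K$. By the reduction just made, $s<s_0$ at $p$. Write
\[
    \nabla f=aX+qJX+Z, \qquad B=\widehat K|_{X^\perp}.
\]
The identities of \cref{lem:cp2-contact-identities} hold with $\psi$ replaced by the constant $\beta=8/5$, so $\psi'=\psi''=0$. This yields
\[
    E=\tfrac12\mathscr L\widehat K(X,X)=\beta^2+4\beta-\lambda-a^2-3B(JX,JX)+3q^2,
    \qquad
    \operatorname{tr}B=\lambda+s-4\beta .
\]
Here $\beta^2+4\beta-\lambda\le \tfrac{64}{25}+\tfrac{32}{5}-16<0$. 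The danger is the term $3q^2$, which can be as large as $3s$, up to about $3s_0$. So $E<0$ cannot be obtained from $E$ alone, and we follow \cref{lem: det reduction}.

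First, simple nullity should follow as in \cref{lem:Simple nullity at an interior contact}. Then
\[
    \frac{\mathscr L\det\widehat K}{2\det B}\le E-\langle B^{-1}C_X,C_X\rangle .
\]
In this setting the commutation formula gives
\[
    C_X=R_X|_{X^\perp}(qJX+Z)=(1+3\,JX\otimes JX^\flat)V .
\]
Therefore $\langle B^{-1}C_X,C_X\rangle\ge 16q^2/B(JX,JX)$. Combined with the $-3B(JX,JX)$ term, AM--GM gives
\[
    -3B(JX,JX)-\frac{16q^2}{B(JX,JX)}\le -2\sqrt{48}\,|q| .
\]
The resulting bound is
\[
    \frac{\mathscr L\det\widehat K}{2\det B}\le \beta^2+4\beta-\lambda-a^2+3q^2-8\sqrt3\,|q|-\langle B^{-1}(\cdots)\rangle .
\]

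\textbf{Main obstacle.} The hard step is controlling $3q^2$ when $|q|$ is large; the $-8\sqrt3|q|$ term alone only helps for $|q|\lesssim 4.6$. Two sources of extra control are available. One is the smallness constraint $s<s_0$. The other is the gradient bound \eqref{eq:main-gradient-bound}, which forces $B(JX,JX)\ge\psi(s)-\beta$ and so is only useful near $s_0$.

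My first attempt would be to exploit the Schur complement sum more fully. I would keep all the terms $\sum_j\langle B^{-1}C_{e_j},C_{e_j}\rangle$, not only $W=X$. I would also bound $B(JX,JX)\le \operatorname{tr}B=\lambda+s-4\beta$, so that $16q^2/B(JX,JX)\ge 16q^2/(\lambda+s)$. Since $q^2\le s$, and the configuration in which $B$ concentrates on $JX$ is expected to be the extremal one, this reduces the estimate to a scalar inequality in $s\in[0,s_0]$ and $\lambda\ge16$. That inequality should be checked numerically in the style of \cref{lem:three vertices ineq}. The constant $8/5$ is presumably chosen exactly so that this inequality closes.
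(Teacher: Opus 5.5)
\textbf{There is a genuine gap at the final step.} Up to the key inequality your computation matches the paper's. You correctly obtain, at the contact point with $\beta=\tfrac85$,
\[
0\le \beta^2+4\beta-\lambda-a^2+3q^2-8\sqrt3\,|q|,
\]
and you correctly observe that \eqref{eq:main-gradient-bound} forces $\psi(s)<\beta$ there. Your deformation framework is harmless but unnecessary: since $H$ blows up at $\partial\Omega$, the minimum $\eta$ of its least eigenvalue is attained at an interior point, where $H-\eta g\ge0$ has a null vector, and the paper runs the same computation there.

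You then declare $3q^2$ the ``main obstacle'' and leave the conclusion to an unverified scalar inequality. That obstacle does not exist. Two things went wrong:
\begin{enumerate}
\item Your estimate $s_0\approx18$ is off. In fact $\psi(24)\approx1.58<\tfrac85<\psi(25)\approx1.64$, so $s_0\in(24,25)$. Hence $|q|\le\sqrt s<5$.
\item You compare $3q^2$ with $8\sqrt3|q|$ alone and ignore the negative constant
\[
\beta^2+4\beta-\lambda\le\tfrac{64}{25}+\tfrac{32}{5}-16=-\tfrac{176}{25}.
\]
\end{enumerate}
Since $x\mapsto3x^2-8\sqrt3x$ is convex, on $[0,5]$ it is at most its endpoint value $75-40\sqrt3\approx5.72<\tfrac{176}{25}$. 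The right-hand side is therefore at most $\tfrac{1699}{25}-40\sqrt3<0$, which is the desired contradiction. This one-line estimate is the paper's entire endgame. (Had your $s_0\approx18$ been correct, $|q|\le\sqrt{18}<8/\sqrt3$ would have closed the argument even without the constant term.)

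\textbf{The repair you propose would not work.} Retaining the Schur terms $\langle B^{-1}C_{e_j},C_{e_j}\rangle$ for $e_j\perp X$ gains nothing quantitative. After commuting derivatives, these involve the components of $\nabla_XH$ on $X^\perp\times X^\perp$, which the first-contact conditions leave unconstrained.

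The bound $B(JX,JX)\le\operatorname{tr}B=\lambda+s-4\beta$ also fails to help:
\begin{itemize}
\item Kept alongside $-3B(JX,JX)$, it adds nothing, because the AM--GM optimum $B(JX,JX)=4|q|/\sqrt3\approx11.4$ already lies below $\operatorname{tr}B$.
\item Used in place of AM--GM, it is strictly weaker. For $q^2=s=24$ and $\lambda=16$ it leaves $-\tfrac{176}{25}+72-\tfrac{384}{33.6}>0$.
\end{itemize}

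Finally, the lower bound $B(JX,JX)\ge\psi(s)-\beta$ that you mention is negative at every contact point, since $\psi(s)<\beta$ there, so it is never useful.
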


\begin{proof}
By \cref{eq:main-gradient-bound} and the boundary expansions in the proof
of \cref{lem:cp2-K-boundary}, the least eigenvalue of $H$ tends to infinity
as one approaches $\partial\Omega$. Hence it attains
its minimum at an interior point $p\in\Omega$. Denote this minimum by
$\eta$, and choose a unit vector $X\in T_p\CP^2$ such that
\[
    H(X,\cdot)=\eta X^\flat.
\]
Suppose, to the contrary, that $\eta\le 8/5$, and set
\[
    K_\eta:=H-\eta g\geq0.
\]

We first show that $K_\eta(p)$ has one-dimensional kernel. If its
nullity were at least two, then the linear map
\[
    \ker K_\eta(p)\longrightarrow\R,
    \qquad
    Y\longmapsto df(JY),
\]
would have a nontrivial kernel. Thus we may choose a unit null vector
$X$ with $q:=df(JX)=0$. Put $a:=df(X)$ and
$B:=K_\eta|_{X^\perp}$. Repeating the null-direction calculation in
\cref{lem:cp2-contact-identities} with the constant barrier $\eta$
gives
\[
    0\leq \frac12\mathscr L K_\eta(X,X)
    =
    \eta^2+4\eta-\lambda-a^2-3B(JX,JX)
    \leq
    \eta^2+4\eta-\lambda<0,
\]
where the last inequality follows from
\cref{lem:spectral bound kappa convex} that $\lambda> 16$. This is impossible. Hence the
nullity is simple, so $B>0$.

We may therefore apply the determinant argument from the proof of
\cref{lem: det reduction}. Decompose
\[
    \grad f=aX+qJX+Z,
    \qquad
    Z\perp X,JX,
\]
and set
\[
    Y=qJX+Z.
\]
Since the barrier is constant, the mixed derivative term in the
determinant calculation reduces to $R_XY$. Since
$\det K_\eta$ has a local minimum equal to zero at $p$,

\begin{equation}\label{eq:uniform-Hessian-determinant}
\begin{aligned}
0
\leq
\frac{\mathscr L\det K_\eta}{2\det B}
\leq{}&
\eta^2+4\eta-\lambda-a^2
-3B(JX,JX)+3q^2 
-\langle B^{-1}R_XY,R_XY\rangle.
\end{aligned}
\end{equation}

By \cref{FS curvature},
\[
    \langle R_XY,JX\rangle=4q.
\]
Thus Cauchy--Schwarz gives
\[
    \langle B^{-1}R_XY,R_XY\rangle
    B(JX,JX)
    \geq16q^2,
\]
Since $B>0$, we have $B(JX,JX)>0$. Hence, by the arithmetic--geometric
mean inequality,
\[
3B(JX,JX)
+\frac{16q^2}{B(JX,JX)}
\ge
2\sqrt{48q^2}
=
8\sqrt3\,|q|.
\]
It follows from \cref{eq:uniform-Hessian-determinant} that
\begin{equation}\label{eq:uniform-Hessian-q-bound}
    0
    \leq
    \eta^2+4\eta-\lambda
    +3q^2-8\sqrt3\,|q|.
\end{equation}

On the other hand, \cref{eq:main-gradient-bound} gives
\[
    \psi(s)<H(X,X)=\eta\leq\frac85.
\]
Since $\psi$ is increasing by \cref{eq:psi-prime} and
\[
    \psi(25)
    =
    \frac{25}{\sqrt{26}}-\log26
    >
    \frac85,
\]
we have $s=|\grad f|^2<25$, and hence $|q|\leq|\grad f|<5$. The function
\[
    x\longmapsto3x^2-8\sqrt3\,x
\]
is convex on $[0,5]$, so
\[
    3q^2-8\sqrt3\,|q|
    \leq
    75-40\sqrt3.
\]
Using again $\lambda>16$ and $\eta\leq8/5$ in
\cref{eq:uniform-Hessian-q-bound}, we obtain
\[
    0
    \leq
    \eta^2+4\eta-\lambda
    +3q^2-8\sqrt3\,|q|
    \leq
    \frac{1699}{25}-40\sqrt3
    <0,
\]
a contradiction. Therefore $\eta>8/5$ and the theorem follows. 
\end{proof}
In the proof above, the role of \cref{eq:main-gradient-bound} is to control the large-gradient
regime: at a point where the least eigenvalue of $\nabla^2f$ is at most $8/5$, it forces $|\nabla f|^2<25$. The remaining bounded-gradient regime can then be excluded using the $\lambda_1$ lower bound and a second determinant maximum principle argument.
\begin{remark}
The constant $8/5$ in
\cref{prop:uniform-Hessian-lower-bound} is chosen for simplicity rather
than optimality. If $\eta$ denotes the minimum eigenvalue of
$\nabla^2 f$, the proof shows that any constant $\eta_0>0$ is admissible
provided
\[
\eta_0^2+4\eta_0-\lambda_*
+
\max_{0\leq x\leq r(\eta_0)}
\bigl(3x^2-8\sqrt3\,x\bigr)
<0,
\]
where $\lambda_*:=\frac{2\pi^2}{\arctan^2 2}$ by Lemma~\ref{lem:spectral bound kappa convex}
and $r(\eta_0)>0$ is determined implicitly by
\[
\psi\bigl(r(\eta_0)^2\bigr)=\eta_0.
\]
Thus the supremum of the admissible values of $\eta_0$ gives the best constant
provided by this sufficient condition. We use $8/5$ because
$\psi(25)>8/5$, so that the proof reduces to the simple bound
$|\langle\nabla f,JX\rangle|<5$.
\end{remark}

Combining \cref{prop:uniform-Hessian-lower-bound} with the
gradient-dependent estimate \cref{eq:main-gradient-bound} gives a uniform
quantitative convexity estimate for the level sets.

\begin{corollary}
\label{cor:uniform-level-set-convexity}
Let $\Omega\subset\CP^2$ be a connected $1$-convex domain with smooth
boundary, and let $u>0$ be its first Dirichlet eigenfunction. Then every regular superlevel set
\[
    \Omega_c
    =
    \{x\in\Omega:u(x)>c\},
    \qquad
    0<c<\max_\Omega u,
\]
has uniformly convex boundary. More precisely, along the regular level
set $\Sigma_c=\{u=c\}$,
\[
    \II_{\Sigma_c}
    >
    \frac{8}{25}g|_{T\Sigma_c}.
\]
\end{corollary}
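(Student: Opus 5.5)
Since $u=e^{-f}$, a regular level set $\Sigma_c=\{u=c\}=\{f=-\log c\}$ has unit normal pointing into the superlevel set $\Omega_c=\{f<-\log c\}$ given by
\[
\nu=-\frac{\nabla f}{|\nabla f|}=\frac{\nabla u}{|\nabla u|}.
\]
With the paper's convention $\II(V,W)=-\langle\nabla_V\nu,W\rangle$, for tangent vectors $V,W$ we have $\langle\nabla f,W\rangle=0$. Differentiating $\nu$ therefore gives
\[
\II_{\Sigma_c}(V,W)=\frac{\nabla^2 f(V,W)}{|\nabla f|}.
\]
So the plan is to bound $\nabla^2 f$ below on $T\Sigma_c$ and divide by $|\nabla f|=\sqrt{s}$, where $s>0$ at regular points.

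By \cref{thm: main}, $\nabla^2 f>\max\{\psi(s),8/5\}g$. It therefore suffices to show the scalar inequality
\[
\frac{\max\{\psi(s),\tfrac85\}}{\sqrt s}\ge\frac{8}{25}\qquad\text{for all } s>0.
\]
Strictness then comes from the strict Hessian inequality. I split into two cases.

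\emph{Case $s\le25$.} The bound $\frac{8/5}{\sqrt s}\ge\frac{8/5}{5}=\frac{8}{25}$ holds directly.

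\emph{Case $s>25$.} Write $t=\sqrt s>5$. I need $\psi(t^2)\ge\frac{8}{25}t$, that is,
\[
\frac{t^2}{\sqrt{1+t^2}}-\log(1+t^2)\ge\frac{8t}{25}.
\]
At $t=5$ the left side is $\psi(25)>8/5$, as noted in the proof of \cref{prop:uniform-Hessian-lower-bound}. So it suffices to show that $g(t):=\psi(t^2)-\frac{8t}{25}$ is increasing for $t\ge5$. Its derivative is $g'(t)=2t\psi'(t^2)-\frac{8}{25}$. By \cref{eq:psi-prime}, with $r=\sqrt{1+t^2}$,
\[
2t\psi'(t^2)=\frac{t(r-1)^2}{r^3}.
\]
At $t=5$, where $r=\sqrt{26}\approx5.10$, this equals $5\cdot(4.10)^2/132.6\approx0.63>0.32$. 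For large $t$ it tends to $1$.

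The only genuine obstacle is verifying that $t(r-1)^2/r^3>\frac{8}{25}$ on all of $[5,\infty)$. I will handle it with crude bounds: $t\ge r-1$ and $(r-1)/r\ge 1-1/\sqrt{26}$ for $t\ge5$. These give
\[
\frac{t(r-1)^2}{r^3}\ge\Big(1-\tfrac{1}{\sqrt{26}}\Big)^3>\Big(\tfrac45\Big)^3>\frac{8}{25}.
\]
Hence $g$ is increasing, $g(5)>0$, and the case $s>25$ follows.

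Combining the two cases gives $\II_{\Sigma_c}>\frac{8}{25}g|_{T\Sigma_c}$. Positive definite second fundamental form with respect to the normal pointing into $\Omega_c$ means $\partial\Omega_c=\Sigma_c$ is uniformly convex, which is the claim.
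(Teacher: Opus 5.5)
Your proposal is correct and follows the paper's proof: it uses the same identity $\II_{\Sigma_c}(V,V)=\nabla^2 f(V,V)/\sqrt{s}$ and the same split at $s=25$, with the uniform bound $8/5$ for $s\le 25$ and the anchor $\psi(25)>8/5$ for $s\ge 25$. The only difference is in the scalar step for $s\ge25$: the paper shows $\psi(s)/\sqrt{s}$ is increasing via \eqref{eq:psi-combination-one}, while you show $\psi(t^2)-\tfrac{8}{25}t$ is increasing using the bound $t(r-1)^2/r^3\ge(1-1/\sqrt{26})^3$, and that bound is valid.
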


\begin{proof}
Let $s=|\grad f|^2>0$. For $V\in T\Sigma_c$, our convention for
$\II$ gives
\[
    \II_{\Sigma_c}(V,V)
    =
    \frac{H(V,V)}{\sqrt{s}}.
\]
If $s\leq25$, then \cref{prop:uniform-Hessian-lower-bound} yields
\[
    \frac{H(V,V)}{\sqrt{s}}
    >
    \frac{8}{5\sqrt{s}}|V|^2
    \geq
    \frac{8}{25}|V|^2.
\]

We next observe that $\psi(s)/\sqrt{s}$ is increasing for $s>0$.
Indeed,
\[
    \frac{d}{ds}\left(\frac{\psi(s)}{\sqrt{s}}\right)
    =
    \frac{2s\psi'(s)-\psi(s)}{2s^{3/2}},
\]
while
\[
    \frac{d}{ds}
    \bigl(2s\psi'(s)-\psi(s)\bigr)
    =
    \psi'(s)+2s\psi''(s)>0
\]
by \cref{eq:psi-combination-one}, and
$2s\psi'(s)-\psi(s)\to0$ as $s\to0$.
Hence, if $s\geq25$, \cref{eq:main-gradient-bound} gives
\[
    \frac{H(V,V)}{\sqrt{s}}
    >
    \frac{\psi(s)}{\sqrt{s}}|V|^2
    \geq
    \frac{\psi(25)}5|V|^2
    >
    \frac{8}{25}|V|^2.
\]
\end{proof}

Finally, the classical ground-state transform of
Singer--Wong--Yau--Yau \cite{SingerWongYauYau1985}, together with its
Bakry--\'Emery interpretation due to Lu--Rowlett \cite{Lu2012Eigenvalues} yields the following fundamental gap estimate.

\begin{corollary}
\label{cor:fundamental-gap-uniform-Hessian}
Let $\Omega\subset\CP^2$ be a connected $1$-convex domain with smooth
boundary, and let $u>0$ be its first Dirichlet eigenfunction. The first two Dirichlet
eigenvalues satisfy
\[
    \lambda_2-\lambda_1>\frac{46}{5}.
\]
\end{corollary}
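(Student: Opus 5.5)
We use the ground-state transform. Let $u_2$ be a second Dirichlet eigenfunction and write $u_2=wu$. The function $w$ satisfies the weighted equation
\[
\Delta w+2\langle\nabla\log u,\nabla w\rangle=-(\lambda_2-\lambda_1)w,
\]
that is, $\mathscr L w=-(\lambda_2-\lambda_1)w$ with the drift Laplacian $\mathscr L=\Delta-2\langle\nabla f,\nabla\cdot\rangle$ of \cref{lem:cp2-contact-identities}. This operator is self-adjoint on $L^2(\Omega,u^2\,d\mathrm{vol})=L^2(e^{-2f}d\mathrm{vol})$. Following Singer--Wong--Yau--Yau, $w$ extends smoothly to $\overline\Omega$ and satisfies the Neumann condition $\partial_\nu w=0$. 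This holds because $u_2=0$ on $\partial\Omega$, $u$ vanishes to exactly first order there, and $\partial_\nu u>0$ by Hopf. Consequently, $\mu:=\lambda_2-\lambda_1$ is the first nonzero Neumann eigenvalue of the Bakry--\'Emery Laplacian $\mathscr L$ with weight $e^{-2f}$. Equivalently, $\mu=\inf\{\int|\nabla w|^2u^2/\int w^2u^2:\int wu^2=0\}$.

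The Bakry--\'Emery Ricci tensor of this weighted space is
\[
\Ric_{2f}=\Ric+2\nabla^2 f=6g+2H,
\]
which exceeds $(6+16/5)g=\tfrac{46}{5}g$ by \cref{prop:uniform-Hessian-lower-bound}. I would apply the weighted Bochner formula
\[
\tfrac12\mathscr L|\nabla w|^2=|\nabla^2w|^2+\langle\nabla w,\nabla\mathscr L w\rangle+\Ric_{2f}(\nabla w,\nabla w).
\]
Integrating against $u^2$ gives $\int\mathscr L w\,w\,u^2=-\mu\int w^2u^2$, hence $\int|\nabla w|^2u^2=\mu\int w^2u^2$. The next step is the Reilly-type identity $\int (\mathscr L w)^2u^2=\int|\nabla^2 w|^2u^2+\int\Ric_{2f}(\nabla w,\nabla w)u^2+(\text{boundary term})$. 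The boundary term formally involves $\II(\nabla w,\nabla w)$ with $\partial_\nu w=0$, so it has a favorable sign because $\Omega$ is convex. In the present situation, however, the weight $u^2$ vanishes on $\partial\Omega$, so the boundary integrals carry a factor $u^2\to0$ and drop out. Dropping $|\nabla^2w|^2\ge0$ yields
\[
\mu^2\int w^2u^2\;\ge\;\int\Ric_{2f}(\nabla w,\nabla w)u^2\;>\;\tfrac{46}{5}\int|\nabla w|^2u^2=\tfrac{46}{5}\mu\int w^2u^2.
\]
Since $w$ is nonconstant, $\int|\nabla w|^2u^2>0$, so the inequality is strict and $\mu>46/5$.

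The main obstacle is justifying the integration by parts up to the boundary. The function $H$ blows up there, so $\Ric_{2f}$ is unbounded, and we need $w\in C^3(\overline\Omega)$. My plan is to integrate over the superlevel sets $\{u>\varepsilon\}$ and let $\varepsilon\to0$. The boundary terms are then $\int_{\{u=\varepsilon\}}u^2\partial_\nu(\cdot)$, and these are $O(\varepsilon^2)\cdot O(1)$ because $w$ is smooth up to the boundary. The term $\int\Ric_{2f}(\nabla w,\nabla w)u^2$ is bounded below by $\tfrac{46}{5}\int|\nabla w|^2u^2$ and is nonnegative, so Fatou's lemma or monotone convergence is enough to pass to the limit. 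The regularity of $w$ up to $\partial\Omega$ (smooth division by the defining function $u$) I would quote from Singer--Wong--Yau--Yau and Lu--Rowlett.
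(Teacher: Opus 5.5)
Your proposal is correct and follows the paper's argument. Both use the Singer--Wong--Yau--Yau ground-state transform $w=u_2/u$, the bound $\Ric_{2f}=6g+2\nabla^2 f>\frac{46}{5}g$ from the uniform Hessian estimate, and integration of the weighted Bochner formula against $u^2$; your Reilly-type form with $\int(\mathscr L w)^2u^2$ is equivalent to the paper's identity $(\lambda_2-\lambda_1)\int|\nabla w|^2u^2=\int|\nabla^2 w|^2u^2+\int\Ric_{2f}(\nabla w,\nabla w)u^2$ because $\mathscr L w=-(\lambda_2-\lambda_1)w$, and your exhaustion by $\{u>\varepsilon\}$ (where the boundary terms carry the factor $u^2$) simply makes explicit a justification the paper leaves implicit.
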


\begin{proof}
For completeness we present the proof here.
Let $u_1=u$ be the first Dirichlet eigenfunction and let $u_2$ be a
second Dirichlet eigenfunction. Following
Singer--Wong--Yau--Yau \cite{SingerWongYauYau1985}, set
$ w:=u_2/u_1.$
Then $w$ extends smoothly to $\overline{\Omega}$, satisfies the Neumann
boundary condition on $\partial\Omega$, and
\[
    -\mathscr L w=(\lambda_2-\lambda_1)w,
    \qquad
    \mathscr L
    =
    \Delta-2\langle\nabla f,\nabla\cdot\rangle,
\]
where $f=-\log u_1$. Equivalently, $\lambda_2-\lambda_1$ is the first
nonzero Neumann eigenvalue of $\mathscr L$ with respect to the weighted
measure $ u_1^2\,dV_g=e^{-2f}\,dV_g.$
The corresponding Bakry--\'Emery Ricci tensor is
$ \Ric_{2f}
    =
    \Ric+2\nabla^2f.$
Combining $\Ric=6g$ with \cref{prop:uniform-Hessian-lower-bound}
gives
\[
    \Ric_{2f}=\Ric+2\nabla^2 f
    >
    \frac{46}{5}g.
\]
The weighted Bochner formula gives
\[
    \frac12\mathscr L|\nabla w|^2
    =
    |\nabla^2w|^2
    +
    \langle\nabla w,\nabla\mathscr Lw\rangle
    +
    (\Ric+2\nabla^2f)(\nabla w,\nabla w).
\]
Integrating against $u_1^2\,dV_g$ and using the Neumann boundary
condition yields
\[
\begin{aligned}
(\lambda_2-\lambda_1)
\int_\Omega |\nabla w|^2u_1^2\,dV_g
={}&
\int_\Omega |\nabla^2w|^2u_1^2\,dV_g 
+
\int_\Omega
(\Ric+2\nabla^2f)(\nabla w,\nabla w)
u_1^2\,dV_g \\
>{}&
\frac{46}{5}
\int_\Omega |\nabla w|^2u_1^2\,dV_g.
\end{aligned}
\]
Since $w$ is nonconstant, we conclude that
$ \lambda_2-\lambda_1>\frac{46}{5}.$
\end{proof}

\appendix
\section{Technical lemmas}
In this appendix we collect the proofs of several technical lemmas in the proof of Theorem~\ref{thm: main}. The first one is the auxiliary barrier inequality used in Lemma~\ref{lem:Simple nullity at an interior contact}.
\begin{lemma}[Auxiliary barrier inequality]\label{lem:Auxiliary barrier inequality}
For $s \geq 8$,
$$
2 \psi^{\prime} \frac{\psi^{\prime}+2 s \psi^{\prime \prime}}{\psi^{\prime}+s \psi^{\prime \prime}}\left(8+\frac{s}{2}-\psi\right)^2 \geq(\psi+2)^2 .
$$    
\end{lemma}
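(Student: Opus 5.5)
We write everything in the variable $r=\sqrt{1+s}$, which turns the inequality into a one-variable statement for $r\ge3$. From \cref{eq:psi-r,eq:psi-prime,eq:psi-second} and $s=r^2-1$ we get
\[
\psi'+s\psi''=\frac{(r-1)^2}{4r^5}\bigl[2r^2+(r+1)(3-r)\bigr]=\frac{(r-1)^2(r^2+2r+3)}{4r^5}.
\]
Combined with \cref{eq:psi-combination-one}, this gives
\[
\frac{\psi'+2s\psi''}{\psi'+s\psi''}=\frac{2(2r+3)}{r^2+2r+3}.
\]
Both factors are positive for $r>1$. Hence the left-hand side of the lemma equals
\[
P(r)\Bigl(\frac{r^2+15}{2}-\psi\Bigr)^2,
\qquad
P(r):=\frac{2(r-1)^2(2r+3)}{r^3(r^2+2r+3)},
\]
where $\psi=r-\frac1r-2\log r$.

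Since $\psi\le r<\frac{r^2+15}{2}$, both bases are positive. It therefore suffices to prove the square-rooted inequality
\[
\sqrt{P(r)}\Bigl(\frac{r^2+15}{2}-\psi\Bigr)\ge\psi+2,
\qquad r\ge3.
\]
Equivalently,
\[
F(r):=\sqrt{P(r)}\,\frac{r^2+15}{2}-2-\bigl(1+\sqrt{P(r)}\bigr)\psi(r)\ge0.
\]
The only transcendental ingredient is $\log r$, and it enters $F$ with a positive coefficient through $-\psi$. So any elementary lower bound for $\log r$ yields a lower bound for $F$.

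\textbf{Asymptotic regime.} Expanding for large $r$ gives
\[
\sqrt P=\frac{2}{r}\Bigl(1-\frac{5}{4r}+O(r^{-2})\Bigr),
\qquad
F(r)=2\log r-\frac{13}{4}+O\Bigl(\frac{\log r}{r}\Bigr).
\]
This is the crucial observation: the leading terms of order $r$ cancel exactly. This reflects that $\psi(s)\sim\sqrt s$ is the sharp growth. Positivity of $F$ is therefore decided by the logarithmic correction $-2\log r$ in $\psi$, which is exactly the term built into the barrier.

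To make this rigorous for $r\ge R_0$ (with $R_0$ around $10$), I will do three things. First, bound $\sqrt{P}$ from below by an explicit rational function, using $(2r+3)\ge 2r$ and an elementary estimate of $(r-1)^2/(r^2+2r+3)$. Second, bound $\psi$ above by $r-\frac1r-2\log r$ with $\log r$ replaced by a lower bound. Third, check that the resulting expression is at least $2\log r-c$ with $c<2\log R_0$, i.e.\ that the $O(\log r/r)$ error terms are dominated by the margin.

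\textbf{Compact range and main obstacle.} On the compact range $3\le r\le R_0$, the constant $-\frac{13}{4}$ is not beaten by $2\log r$ near $r=3$ from the asymptotic formula alone, since $2\log 3\approx2.2$. There the lower-order terms matter, so the asymptotic argument cannot be used and this is the main obstacle. I will handle it by direct evaluation with error control. First I check that $F(3)$, i.e.\ $s=8$, is comfortably positive: there $\psi(8)<\tfrac12$ while $\sqrt P\,(r^2+15)/2$ is about $3.2$. Then I split $[3,R_0]$ into finitely many subintervals $[r_i,r_{i+1}]$. On each subinterval I bound $F$ from below using monotonicity of each building block: $\sqrt{P}$ is evaluated at the endpoint that makes it smallest, $(r^2+15)/2$ at $r_i$, and $\psi$ (which is increasing) at $r_{i+1}$. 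This produces finitely many rational/logarithmic numerical checks, which are routine once the partition is fine enough. If the margin near $r\approx 6$–$10$, where the asymptotic and compact regimes meet, turns out to be thin, I would first refine the partition there before attempting sharper analytic bounds on $\log r$.
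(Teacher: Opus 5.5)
\textbf{Verdict: genuine but repairable gap.} Your reduction itself is correct and lossless. The closed form of $P(r)$ is right, both bases are positive, and the lemma is equivalent to $F(r)\ge0$ on $[3,\infty)$. Indeed $F>0$ there, with minimum about $1.77$ near $r\approx 8$.

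The gap is in the asymptotic regime, where your expansion is off by a constant. It is true that $\sqrt P\,\frac{r^2+15}{2}=r-\frac54+O(r^{-1})$. But you dropped the contribution $\sqrt P\,\psi\to 2$:
\[
(1+\sqrt P)\,\psi=r-2\log r+2+O\Bigl(\frac{\log r}{r}\Bigr),
\qquad
F(r)=2\log r-\frac{21}{4}+O\Bigl(\frac{\log r}{r}\Bigr),
\]
not $2\log r-\frac{13}{4}$. Since $F-2\log r\to-\frac{21}{4}$, any bound $F\ge 2\log r-c$ valid for all large $r$ needs $c\ge\frac{21}{4}>2\log 10\approx 4.61$. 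So your third step cannot close at $R_0\approx10$; it cannot begin below $e^{21/8}\approx 13.8$. Throughout the moderate range, positivity of $F$ is carried by the positive $O(\log r/r)$ terms rather than by $2\log r$. For example, $F(20)\approx 2.16$ while $2\log 20-\frac{21}{4}\approx 0.74$. The regime where your heuristic applies therefore starts much later than you expect.

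There are two smaller slips. First, $\sqrt{P(3)}\cdot 12=8/\sqrt3\approx4.62$, not $3.2$, so $F(3)\approx1.97$. Second, in the ungrouped form of $F$ the term $-\sqrt P\,\psi$ needs the \emph{largest} value of $\sqrt P$ on a subinterval. You should group as $\sqrt P\bigl(\frac{r^2+15}{2}-\psi\bigr)-(\psi+2)$ before applying your endpoint rule. All of this is repairable with a larger $R_0$ and a longer partition. As written, however, the compact range is an interval-arithmetic check you have not carried out, so the proposal is not yet a proof.

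The paper avoids all of this. The rational lower bound for $\sqrt P$ that you planned to use only for large $r$ in fact works globally: $P\ge\frac{4(r-1)^2}{r^2(r+1)^2}$. This is equivalent to $(2r+3)(r+1)^2-2r(r^2+2r+3)=3r^2+2r+3>0$. Its square root $\frac{2(r-1)}{r(r+1)}$ is rational, and the reduced inequality becomes exactly
\[
\frac{2(r-1)}{r(r+1)}\Bigl(\frac{r^2+15}{2}-\psi\Bigr)-(\psi+2)=\frac{2N(r)}{r^2(r+1)},
\qquad
N(r)=(r^3+3r^2-2r)\log r-3r^3+8r^2-6r-1 .
\]
Then $N(3)=48\log 3-28>0$. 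Also $N'(r)=(3r^2+6r-2)\log r-8r^2+19r-8>0$ for $r\ge3$, using $\log r\ge 2(y+y^3/3)$ with $y=\frac{r-1}{r+1}$.

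This cruder bound worsens the asymptotic constant from $-\frac{21}{4}$ to $-6$. Even so, the lower-order terms keep $N>0$ on all of $[3,\infty)$, with no case split and no numerics. Keeping $\sqrt P$ exact buys you margin you do not need. The price is an irrational function that forces the two-regime, computer-style verification.
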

\begin{proof}
    Set $r=\sqrt{1+s} \geq 3 .$
From the explicit formulas for $\psi^{\prime}$ and $\psi^{\prime \prime}$,
$$
2 \psi^{\prime} \frac{\psi^{\prime}+2 s \psi^{\prime \prime}}{\psi^{\prime}+s \psi^{\prime \prime}}=\frac{2(r-1)^2(2 r+3)}{r^3\left(r^2+2 r+3\right)} \geq \frac{4(r-1)^2}{r^2(r+1)^2} .
$$
It therefore suffices to prove
$$
\frac{2(r-1)}{r(r+1)}\left(8+\frac{s}{2}-\psi\right) \geq \psi+2 .
$$
The difference between the two sides is
$
\frac{2 N(r)}{r^2(r+1)},
$
where
$$
N(r)=\left(r^3+3 r^2-2 r\right) \log r-3 r^3+8 r^2-6 r-1 .
$$

Now
$
N(3)=48 \log 3-28>0
$
and
$$
N^{\prime}(r)=\left(3 r^2+6 r-2\right) \log r-8 r^2+19 r-8 .
$$

Writing $y=\frac{r-1}{r+1},$
we have
$$
\log r=2 \operatorname{arctanh} y \geq 2\left(y+\frac{y^3}{3}\right) .
$$
Consequently,
$$
N^{\prime}(r) \geq \frac{33 r^4+59 r^3+51 r^2-63 r-8}{3(r+1)^3}>0
$$
for $r \geq 3$. Hence $N(r)>0$, which proves the auxiliary inequality.
\end{proof}

We next compute \[\mathcal P(\tau,x)
:=
\min_{\substack{C=C^*\text{ on }X^\perp\\ \operatorname{tr}C=T}}
\left\{
\psi'|C|^2
+2\psi''|C(qJX+Z)|^2
+3C(JX,JX)
\right\}\] in the proof of Lemma~\ref{lem: det reduction}.

\begin{lemma}[The minimization in Lemma~4.4]
\label{lem:quadratic-minimization}
Fix $s>0$, and regard $\psi'=\psi'(s)$ and $\psi''=\psi''(s)$ as
constants. Assume
\[
\psi'>0,
\qquad
\psi'+2s\psi''>0.
\]
Let
\[
0\le x\le\tau\le s,
\qquad
V=qJX+Z\in X^\perp,
\]
where
\[
|V|^2=\tau,
\qquad
\langle V,JX\rangle^2=x,
\]
and fix $T\in\mathbb R$. Define
\[
\mathcal P(\tau,x)
:=
\min_{\substack{C=C^*\text{ on }X^\perp\\ \tr C=T}}
\left\{
\psi'|C|^2
+
2\psi''|CV|^2
+
3C(JX,JX)
\right\}.
\]
Then the minimum is unique. For $0<\tau\le s$,
\begin{align}
\mathcal P(\tau,x)
={}&
\frac{
2T^2(\psi')^2(\psi'+2\tau\psi'')
+
6T\psi'(\psi'+2\tau\psi'')
-
9(\psi'+\tau\psi'')
}
{2\psi'(3\psi'+4\tau\psi'')}
\notag\\
&-
\frac{
3\psi''
\left(
4T\tau\psi'\psi''
+
4T(\psi')^2
-
6\tau\psi''
-
3\psi'
\right)
}
{
2\psi'(\psi'+\tau\psi'')
(3\psi'+4\tau\psi'')
}
\,x
\notag\\
&-
\frac{
9(\psi'')^2
}
{
2\psi'(\psi'+\tau\psi'')
(3\psi'+4\tau\psi'')
}
\,x^2.
\label{eq:P-explicit}
\end{align}
Moreover,
\begin{equation}
\label{eq:P-xx}
\frac{\partial^2\mathcal P}{\partial x^2}
=
-\frac{
9(\psi'')^2
}{
\psi'(\psi'+\tau\psi'')
(3\psi'+4\tau\psi'')
}
\le0,
\end{equation}
and
\begin{align}
\frac{d^2}{d\tau^2}\mathcal P(\tau,0)
&=
-\frac{
4(\psi'')^2(2T\psi'+3)^2
}{
(3\psi'+4\tau\psi'')^3
}
\le0,
\label{eq:P-tau-0}\\
\frac{d^2}{d\tau^2}\mathcal P(\tau,\tau)
&=
-\frac{
16(\psi'')^2(T\psi'-3)^2
}{
(3\psi'+4\tau\psi'')^3
}
\le0.
\label{eq:P-tau-tau}
\end{align}
The value at $(\tau,x)=(0,0)$ is obtained by continuity.
\end{lemma}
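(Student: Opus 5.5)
We view $\mathcal P(\tau,x)$ as the minimum of an explicit quadratic function on the finite-dimensional affine space $\{C=C^*\text{ on }X^\perp,\ \tr C=T\}$, and we compute it by Lagrange multipliers.

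\textbf{Step 1: strict convexity and uniqueness.} If $\psi''\ge0$, the quadratic part $\psi'|C|^2+2\psi''|CV|^2$ is at least $\psi'|C|^2$. If $\psi''<0$, the bound $|CV|^2\le|C|^2|V|^2=\tau|C|^2$ and $\tau\le s$ give
\[
\psi'|C|^2+2\psi''|CV|^2\ge(\psi'+2\tau\psi'')|C|^2\ge\min\{\psi',\,\psi'+2s\psi''\}\,|C|^2 .
\]
In both cases the quadratic part is positive definite, by the hypotheses $\psi'>0$ and $\psi'+2s\psi''>0$. So the functional is strictly convex and coercive on the affine space, and the minimizer exists and is unique. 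The same linear-in-$\tau$ argument gives $\psi'+\tau\psi''>0$ and $3\psi'+4\tau\psi''>0$ for $0\le\tau\le s$. These are exactly the denominators appearing in \eqref{eq:P-explicit}--\eqref{eq:P-tau-tau}.

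\textbf{Step 2: reduction to a small block.} Choose an orthonormal basis $e_2=JX$, $e_3=w$, $e_4=n$ of $X^\perp$ with
\[
V=q\,JX+\sqrt{\tau-x}\,w ,
\]
and $n$ orthogonal to both (if $\tau=x$, any $w$ works). The functional is invariant under the reflection $n\mapsto -n$. Uniqueness of the minimizer then forces $C(n,JX)=C(n,w)=0$. Hence
\[
C=\begin{pmatrix}\alpha&\beta\\ \beta&\gamma\end{pmatrix}\oplus(T-\alpha-\gamma).
\]
The objective becomes an explicit quadratic in $(\alpha,\beta,\gamma)$ whose coefficients depend on $q^2=x$, $\tau-x$ and $q\sqrt{\tau-x}$. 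Since $\beta$ can be replaced by $-\beta$, the odd quantity $q\sqrt{\tau-x}$ only enters through the combination $\beta\,q\sqrt{\tau-x}$. After optimizing over $\beta$, only $x$ and $\tau$ survive.

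\textbf{Step 3: solve and differentiate.} Set the gradient to zero; this gives a $3\times3$ linear system. Equivalently, use the stationarity equation
\[
2\psi'C+2\psi''(CV\otimes V+V\otimes CV)+3\,JX\otimes JX=\mu I
\]
with the multiplier $\mu$ fixed by $\tr C=T$. Solve the system by Cramer's rule and substitute back; at the minimizer the value equals half the linear part evaluated at the minimizer plus the constant terms. This should produce \eqref{eq:P-explicit}, a quadratic in $x$ with $\tau$-dependent coefficients. Differentiating twice in $x$ gives \eqref{eq:P-xx}. Setting $x=0$, respectively $x=\tau$, gives one-variable rational functions of $\tau$; differentiating these twice should collapse to the perfect squares in \eqref{eq:P-tau-0} and \eqref{eq:P-tau-tau}. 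The signs then follow from the positivity of $3\psi'+4\tau\psi''$, $\psi'+\tau\psi''$ and $\psi'$ established in Step 1.

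\textbf{Step 4: the vertex $(0,0)$.} At $\tau=0$ the minimizer depends continuously on the data by strict convexity, so $\mathcal P$ is continuous. One can also check directly that $\min\{\psi'|C|^2+3C(JX,JX)\}$ agrees with the $\tau\to0$ limit of \eqref{eq:P-explicit}.

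The main obstacle is Step 3. The Cramer's-rule algebra is heavy, and in particular showing that the second $\tau$-derivatives along the two edges factor as clean negative squares over $(3\psi'+4\tau\psi'')^3$ is error-prone. I would organize this computation by first treating the two edges $x=0$ and $x=\tau$ separately. On these edges $V$ is either orthogonal or parallel to $JX$, so $\beta=0$ and the system is diagonalizable by hand. Only afterwards would I handle general $x$ for \eqref{eq:P-xx}, which needs only the $x^2$ coefficient.
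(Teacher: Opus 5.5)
Your proposal is correct and is essentially the paper's proof. Both arguments get uniqueness from strict convexity of $\psi'|D|^2+2\psi''|DV|^2$ on trace-free $D$, then pass to an adapted orthonormal basis of $X^\perp$, solve the critical-point linear system, substitute back, and differentiate directly; the paper likewise lists the minimizer's entries and leaves the simplification to the reader. The differences are cosmetic: you kill the entries involving $n$ by a reflection-plus-uniqueness argument where the paper reads $C_{13}=C_{23}=0$ off the critical-point equations, and the paper aligns $e_1$ with $V$ rather than with $JX$, so that $|CV|^2=\tau(C_{11}^2+C_{12}^2+C_{13}^2)$ and $C_{12}=-3\sqrt{x(\tau-x)}/\bigl(2\tau(\psi'+\tau\psi'')\bigr)$ decouples at once, which makes your Step 3 noticeably lighter than the coupled system in your $JX$-adapted basis.
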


\begin{proof}
We first check that the minimization problem is strictly convex. The tangent
space to the affine constraint $\tr C=T$ consists of trace-free symmetric
tensors $D$. Along such a variation,
\[
\frac12\frac{d^2}{dt^2}\Big|_{t=0}
\left[
\psi'|C+tD|^2
+2\psi''|(C+tD)V|^2
+3(C+tD)(JX,JX)
\right]
=
\psi'|D|^2+2\psi''|DV|^2.
\]
If $\psi''\ge0$, this is strictly positive for every $D\neq0$ since
$\psi'>0$. If $\psi''<0$, then
\[
|DV|^2\le |D|^2|V|^2=\tau|D|^2,
\]
and hence
\[
\psi'|D|^2+2\psi''|DV|^2
\ge
(\psi'+2\tau\psi'')|D|^2
\ge
(\psi'+2s\psi'')|D|^2>0.
\]
Thus the Hessian of the functional is positive definite on the tangent space
of the constraint, so the functional is coercive and strictly convex on
$\{\tr C=T\}$. In particular, the minimizer is unique.

We also note that
\[
\psi'+\tau\psi''>0,
\qquad
3\psi'+4\tau\psi''
=
\psi'+2(\psi'+2\tau\psi'')>0.
\]

Assume $\tau>0$. Since the functional is unchanged if $V$ is replaced
by $-V$, choose an orthonormal basis
$\{e_1,e_2,e_3\}$ of $X^\perp$ such that
\[
V=\sqrt{\tau}\,e_1,
\qquad
JX
=
\sqrt{\frac{x}{\tau}}\,e_1
+
\sqrt{\frac{\tau-x}{\tau}}\,e_2.
\]
Write $C=(C_{ij})$
in this basis. Using
$C_{33}=T-C_{11}-C_{22},$
the functional becomes
\begin{align*}
&\psi'\left(
C_{11}^2+C_{22}^2+(T-C_{11}-C_{22})^2
+2C_{12}^2+2C_{13}^2+2C_{23}^2
\right)\\
&\quad
+
2\tau\psi''
\left(
C_{11}^2+C_{12}^2+C_{13}^2
\right)\\
&\quad
+
3\left(
\frac{x}{\tau}C_{11}
+
\frac{2\sqrt{x(\tau-x)}}{\tau}C_{12}
+
\frac{\tau-x}{\tau}C_{22}
\right).
\end{align*}
Regarding the five free entries of $C=(C_{ij})$ as variables, Differentiating the functional with respect to its variables and setting the derivatives equal to zero gives
\[
C_{13}=C_{23}=0,
\qquad
C_{12}
=
-\frac{
3\sqrt{x(\tau-x)}
}{
2\tau(\psi'+\tau\psi'')
},
\]
and
\begin{align}
(\psi'+2\tau\psi'')C_{11}-\psi'C_{33}
&=
-\frac{3x}{2\tau},
\label{eq:C11-critical}\\
\psi'(C_{22}-C_{33})
&=
-\frac{3(\tau-x)}{2\tau},
\label{eq:C22-critical}\\
C_{11}+C_{22}+C_{33}
&=T.
\label{eq:C-trace}
\end{align}
Solving \eqref{eq:C11-critical}--\eqref{eq:C-trace} yields
\[
C_{11}
=
\frac{
2\tau\psi'T+3\tau-9x
}{
2\tau(3\psi'+4\tau\psi'')
},
\]
\[
C_{22}
=
\frac{
2\tau^2\psi'\psi''T
-3\tau^2\psi''
+3\tau x\psi''
+\tau(\psi')^2T
-3\tau\psi'
+\frac92x\psi'
}{
\tau\psi'(3\psi'+4\tau\psi'')
},
\]
and
\[
C_{33}
=
\frac{
2\tau\psi'\psi''T
+3\tau\psi''
-3x\psi''
+(\psi')^2T
+\frac32\psi'
}{
\psi'(3\psi'+4\tau\psi'')
}.
\]

Substituting these expressions, together with the formula for $C_{12}$,
into the functional and simplifying gives
\eqref{eq:P-explicit}.

Since $\mathcal P(\tau,x)$ is quadratic in $x$, differentiating
\eqref{eq:P-explicit} twice gives
\[
\frac{\partial^2\mathcal P}{\partial x^2}
=
-\frac{
9(\psi'')^2
}{
\psi'(\psi'+\tau\psi'')
(3\psi'+4\tau\psi'')
},
\]
which proves \eqref{eq:P-xx}.

Next, setting $x=0$ in \eqref{eq:P-explicit} and differentiating twice
with respect to $\tau$ gives
\[
\frac{d^2}{d\tau^2}\mathcal P(\tau,0)
=
-\frac{
4(\psi'')^2(2T\psi'+3)^2
}{
(3\psi'+4\tau\psi'')^3
}.
\]
Similarly, setting $x=\tau$ first and then differentiating twice gives
\[
\frac{d^2}{d\tau^2}\mathcal P(\tau,\tau)
=
-\frac{
16(\psi'')^2(T\psi'-3)^2
}{
(3\psi'+4\tau\psi'')^3
}.
\]
All denominators are positive by the inequalities established above, so
the three second derivatives are nonpositive.

Finally, when $\tau=0$, necessarily $x=0$ and $V=0$. The value
$\mathcal P(0,0)$ is obtained by taking the limit $\tau\downarrow0$ in
\eqref{eq:P-explicit}.
\end{proof}

 We next prove the three vertex inequalities in the proof of Proposition~\ref{prop: no interior first contact}.

\begin{lemma}\label{lem:three vertices ineq}
Let $\lambda\ge16$ and $s\ge8$. Let $\mathcal Q(\tau,x)$ denote the right-hand side of the determinant estimate in \cref{Ineq: L det K / 2det B}. Then
$$
\mathcal Q(0,0)<0,\qquad
\mathcal Q(s,0)<0,\qquad
\mathcal Q(s,s)<0.
$$
Equivalently, the determinant upper bound is strictly negative at each of the three extremal configurations identified in Lemma~\ref{lem: det reduction}.
\end{lemma}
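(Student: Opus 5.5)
The plan is to make each of the three vertex values an explicit function of $(s,\lambda)$, reduce to $\lambda=16$ by monotonicity, and then check a one-variable inequality in $r=\sqrt{1+s}\ge3$. Here $\mathcal Q$ is the right-hand side of \cref{Ineq: L det K / 2det B} and contains no explicit $-\lambda$. It depends on $\lambda$ only through $T=\lambda+s-4\psi$, which enters the term $-2\psi'\psi T$ and the minimum $\mathcal P$.

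\textbf{Step 1: explicit vertex formulas.} Substitute $(\tau,x)=(0,0),(s,0),(s,s)$ into \cref{eq:P-explicit}. At $(0,0)$ I take the limit $\tau\downarrow0$, which gives
\[
\mathcal P(0,0)=\frac{\psi'T^2}{3}+T-\frac{3}{2\psi'}.
\]
At the other two vertices I set $\tau=s$, and at $(s,s)$ also $x=s$. Each $\mathcal P$ is a quadratic polynomial in $T$. Its leading coefficient is
\[
\frac{\psi'(\psi'+2\tau\psi'')}{3\psi'+4\tau\psi''},
\]
or at $(s,s)$ the corresponding combination from \cref{eq:P-explicit}. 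These coefficients are positive by \cref{eq:psi-combination-one}, since $3\psi'+4\tau\psi''>0$. Hence each $\mathcal Q(\text{vertex})$ is a concave quadratic in $T$.

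\textbf{Step 2: reduce to $\lambda=16$.} I differentiate in $\lambda$, using $\partial T/\partial\lambda=1$. The derivative $\partial_\lambda\mathcal Q$ equals $-2\psi\psi'-\partial_T\mathcal P$ plus explicit affine-in-$T$ contributions. I will show $\partial_T\mathcal P>0$ once $T\ge 16+s-4\psi$, which is where the positive linear terms ($6T\psi'(\dots)$, and at the $(s,s)$ vertex the $-12T(\psi')^2\psi''x$-type term) dominate. This makes $\mathcal Q$ decreasing in $\lambda$ on $[16,\infty)$, so it suffices to take $\lambda=16$. If the linear-in-$T$ coefficient at $(s,s)$ is negative for some $s$, I will instead use that the vertex of the parabola in $T$ lies below $16+s-4\psi$. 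This requires a small separate inequality.

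\textbf{Step 3: one-variable inequalities.} With $\lambda=16$, write everything in terms of $r$ using \cref{eq:psi-r}, \cref{eq:psi-prime} and \cref{eq:psi-second}. The only transcendental quantity is $\log r$, and $\psi'$, $\psi''$ are rational in $r$.

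For large $r$ the asymptotics are $\psi'\sim 1/(2\sqrt s)$, $T\sim s$, and $\mathcal P\sim \psi'T^2/3\sim s^{3/2}/6$. So $-\mathcal P$ dominates every other term: $\psi^2\sim s$, $-s$, $\tau+3x\le 4s$, and $3/(2\psi')\sim 3\sqrt s$. Thus $\mathcal Q\to-\infty$, and I will fix an explicit $r_0$ beyond which crude bounds suffice. On $[3,r_0]$ I will clear denominators to get $\mathcal Q=-N_i(r,\log r)/D_i(r)$ with $D_i>0$. Then I replace $\log r$ by the appropriate one-sided bound: either $2(y+y^3/3)\le\log r$ with $y=(r-1)/(r+1)$, as in \cref{lem:Auxiliary barrier inequality}, or $\log r\le r-1$ or $\log r\le(r^2-1)/(2r)$. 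The choice depends on the sign of the coefficient of $\log r$ in $N_i$. The result is a polynomial positivity statement, which I will check by grouping terms as in the $0\le s\le8$ part of \cref{prop: no interior first contact}, or by subdividing $[3,r_0]$.

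\textbf{Main obstacle.} I expect the hard part to be the moderate range near $s=8$ ($r=3$) at the vertex $(s,s)$. There $\psi''\approx0$, the favorable $T^2$ term is weakest relative to the positive $+3x=3s$ contribution, and the $\sqrt{(\psi')^2+\psi\psi''}$ coefficients must be controlled. I will first try the lower bound $\sqrt{(\psi')^2+\psi\psi''}\ge0$, which makes the coefficients worse but removes the square root. If that bound is too lossy, I will use $\sqrt{(\psi')^2+\psi\psi''}\ge\psi'$ on the range where $\psi''\ge0$, and otherwise a rational lower bound obtained from the factorization of $(\psi')^2+\psi\psi''$ in \cref{lem:cp2-barrier-properties}.
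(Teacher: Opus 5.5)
Your overall structure matches the paper's. $\mathcal Q$ depends on $\lambda$ only through $T$, and $\partial_T\mathcal P>0$ at all three vertices. The $T$-linear coefficients are $1$, $3(\psi'+2s\psi'')/(3\psi'+4s\psi'')$ and $3\psi'/(3\psi'+4s\psi'')$, all positive, so the fallback in your Step~2 is never needed and everything reduces to $\lambda=16$. At $(0,0)$ and $(s,0)$ your plan is viable; at $(s,0)$ the paper also simply discards $\sigma:=\sqrt{(\psi')^2+\psi\psi''}$.

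\textbf{The gap is the vertex $(s,s)$.} There $\sigma$ enters with coefficient $-8s(2+\psi\psi')$, and your first choice $\sigma\ge0$ fails. At $\lambda=16$ the resulting upper bound is about $+4.9$ at $s=24$, $+41$ at $s=143$ and $+6.9$ at $s=399$; it is positive for $s$ roughly between $15$ and $420$. The true value (e.g.\ $\approx-17.5$ at $s=24$) is negative only because $\sigma$ is still comparable to $\psi'$ there.

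Your second fallback is vacuous, since $\psi''<0$ for every $s>8$. The third (``a rational lower bound from the factorization'') is exactly the missing ingredient, and you leave it unspecified. Concretely you need $\sigma\ge c(r)\,\psi'$ with $c(r)$ not too small. Writing $\theta=\sigma/\psi'$, the factorization gives $\theta^2=\frac{2r(r-3)L+4(r-1)}{(r-1)^3}$. Then $L\ge\frac{2(r-1)}{r+1}$ yields $\theta^2\ge\frac{4}{r+1}\ge\frac{64}{(r+5)^2}$. Even so, the bound $\theta\ge\frac{8}{r+5}$ leaves a margin of only about $1.3$ near $s\approx140$, against individual terms of size several hundred, and that is before any loss from replacing $L$ by rational bounds.

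This is why the paper uses $(1-\theta)(\theta-\frac{8}{r+5})\ge0$ to obtain $\psi'+\sigma\ge2\psi'+\frac{r+5}{r+13}\frac{\psi\psi''}{\psi'}$, which is polynomial in $\psi$. It then closes with $0\le\psi\le z_*(r)$ (from $L\ge3-\frac{10}{r+2}$), strict convexity in $z=\psi$, and two polynomial endpoint checks.

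\textbf{Your large-$s$ heuristic in Step~3 is wrong at two of the three vertices.} This undermines the plan of using crude bounds beyond some $r_0$. The growth $\mathcal P\sim\psi'T^2/3\sim s^{3/2}/6$ holds only at $(0,0)$. For $\tau=s$, the $T^2$-coefficient you wrote in Step~1 equals $\frac{(r-1)^2(2r+3)}{2r^3(r^2+4r+6)}\sim\frac1s$, because $\psi'+2s\psi''\sim\frac1s$ is much smaller than $\psi'$. Hence $\mathcal P(s,0)\approx s$ and $\mathcal P(s,s)\approx T^2/s+3T\approx4s$.

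At $(s,s)$ the order-$s$ terms nearly cancel: $\psi^2-2\psi\psi'T+3s-\mathcal P(s,s)\approx(1-1+3-4)s=-s$. So the sign is decided by lower-order terms, including the $\sigma$-term; with $\sigma$ discarded, the positive lower-order terms win up to $s\approx420$. There is therefore no tail where crude estimates suffice. The delicate range is not near $s=8$, where $\sigma=\psi'$ exactly and $\mathcal Q_{16}(8,8)\approx-12$, but this intermediate range.

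Note also that after substitution the vertex bounds are quadratic in $L$. Choosing a one-sided bound for $L$ by the sign of its coefficient is therefore not enough. The paper instead uses convexity (in $L$ at $(s,0)$, in $z=\psi$ at $(s,s)$) together with two-sided bounds.
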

\begin{proof}
Let $\mathcal Q_\lambda(\tau,x)$ denote the upper bound in \cref{Ineq: L det K / 2det B}. We first observe that each of the three vertex values is decreasing in $\lambda$. Indeed, $T=\lambda+s-4\psi$, while the part of $\mathcal Q$ outside the term $-\mathcal P$ has $\lambda$-derivative $-2\psi\psi'<0$. Direct differentiation of the three minimized expressions gives
\begin{equation*}
\begin{gathered}
\frac{\partial}{\partial T} \mathcal{P}(0,0)=\frac{2 T \psi^{\prime}+3}{3}>0, \\
\frac{\partial}{\partial T} \mathcal{P}(s, 0)=\frac{\left(\psi^{\prime}+2 s \psi^{\prime \prime}\right)\left(2 T \psi^{\prime}+3\right)}{3 \psi^{\prime}+4 s \psi^{\prime \prime}}>0,
\end{gathered}
\end{equation*}
and
$$
\frac{\partial}{\partial T}\mathcal P(s,s)
=
\frac{\psi'\bigl(2T(\psi'+2s\psi'')+3\bigr)}
{3\psi'+4s\psi''}>0.
$$

Here we used $\psi'>0$ and $\psi'+2s\psi''>0$. Thus it suffices to prove the inequalities for $\lambda=16$.
Put
$$
r=\sqrt{1+s}\ge3,
\qquad
L=\log r.
$$
Then
$$
\psi=r-\frac1r-2L,
\qquad
\psi'=\frac{(r-1)^2}{2r^3},
\qquad
\psi''=\frac{(r-1)(3-r)}{4r^5}.
$$
We treat the three vertices separately.
\begin{enumerate}
    \item The vertex $(\tau,x)=(0,0)$.
The inequality
$
L\ge1-\frac1r
$
gives
$
0<\psi\le \frac{(r-1)^2}{r},
$
and hence
$$
\psi^2+4\psi<s.
$$

Moreover,
$
2\psi'+s\psi''>0.
$
Consequently, the part of $\mathcal Q_{16}(0,0)$ preceding $-\mathcal P(0,0)$ is strictly negative.

It remains to note that $\mathcal P(0,0)>0$. Since
$$
\mathcal P(0,0)
=
\frac{2T^2(\psi')^2+6T\psi'-9}{6\psi'},
$$
it is enough to show $T\psi'>3/2$. Using $L\ge1$,
$$
T
=
r^2-4r+15+\frac4r+8L
\ge
r^2-4r+23+\frac4r.
$$
Writing $r=3+\xi$, one obtains
$$
2r^4\left(T\psi'-\frac32\right)
\ge
\xi^5+6\xi^4+14\xi^3+26\xi^2+36\xi+13>0.
$$
Therefore
$
\mathcal Q_{16}(0,0)<0.
$

\item The vertex $(\tau,x)=(s,0)$.
Since
$$
\psi'+\sqrt{(\psi')^2+\psi\psi''}\ge\psi',
$$

replacing the square-root coefficient by $\psi'$ gives a larger upper bound. After substituting the formulas above, this upper bound of $\mathcal Q_{16}(s,0)$ is a convex quadratic polynomial in $L$.

Set
$
y=\frac{r-1}{r+1}.
$
Since $L=2\operatorname{arctanh}y$,
$$
2y
\le L\le
2y+\frac{2y^3}{3(1-y^2)}.
$$

By convexity, it therefore suffices to evaluate the upper bound at these two endpoints $2y$ and $2y+\frac{2y^3}{3(1-y^2)}$. After clearing the positive denominators and writing $r=3+\xi$, the negatives of the two resulting numerators are polynomials in $\xi$ with strictly positive coefficients. Hence
$
\mathcal Q_{16}(s,0)<0.
$

\item The vertex $(\tau,x)=(s,s)$.
Set
\begin{equation}\label{eq:theta def}
    \theta
:=
\frac{\sqrt{(\psi')^2+\psi\psi''}}{\psi'}.
\end{equation}
Since $\psi\ge0$ and $\psi''\le0$ for $r\ge3$, we have $\theta\le1$.
Moreover, using
\[
\psi=r-\frac1r-2L,
\qquad
\psi'=\frac{(r-1)^2}{2r^3},
\qquad
\psi''=\frac{(r-1)(3-r)}{4r^5},
\]
we compute
\[
\theta^2
=
\frac{2r(r-3)L+4(r-1)}{(r-1)^3}.
\]
Since
\[
L=\log r
=
2\operatorname{arctanh}\frac{r-1}{r+1}
\ge
\frac{2(r-1)}{r+1},
\]
it follows that
\[
\theta^2
\ge
\frac4{r+1}
\ge
\frac{64}{(r+5)^2},
\]
where the last inequality is equivalent to $(r-3)^2\ge0$. Hence
\[
\frac8{r+5}\le\theta\le1.
\]
Therefore
\[
(1-\theta)
\left(
\theta-\frac8{r+5}
\right)\ge0,
\]
which gives
\[
\theta
\ge
\frac{(r+5)\theta^2+8}{r+13}.
\]
Multiplying by $\psi'$ and using \cref{eq:theta def}
we obtain
\begin{equation}
\label{eq:strong-square-root-bound}
\psi'
+\sqrt{(\psi')^2+\psi\psi''}
\ge
2\psi'
+
\frac{r+5}{r+13}\frac{\psi\psi''}{\psi'}.
\end{equation}

We next derive a rational upper bound for $\psi$. Set $$f(r)= \log r-3+\frac{10}{r+2} \quad(r\geq 3).$$ We compute \[f^{\prime}(r)=\frac{1}{r}-\frac{10}{(r+2)^2}=\frac{r^2-6 r+4}{r(r+2)^2}\] Hence $f$ has a global minimum at $r_0=3+\sqrt{5}.$ Numerically, $f_{\min}\approx 0.038>0.$
Thus
\[
L\ge3-\frac{10}{r+2},
\]
and consequently
\begin{equation}
\label{eq:psi-upper-hopf}
0\le\psi
=
r-\frac1r-2L
\le
\frac{r^3-4r^2+7r-2}{r(r+2)}
=:z_*(r).
\end{equation}

We now apply \eqref{eq:strong-square-root-bound} to the vertex $(s,s)$.
For $z\ge0$, set
\[
\mathcal T_z:=16+s-4z
\]
and define
\[
\mathcal K_z(s,s)
:=
\frac{\psi'\mathcal T_z^2}{2}
-
\frac{(\psi'\mathcal T_z-3)^2}
{2(3\psi'+4s\psi'')}.
\]
Here $\psi'$ and $\psi''$ are regarded as fixed functions of $r$. Define
\begin{align*}
\mathcal F(r,z)
:={}&
z^2+4z
-\psi'\bigl(4z^2+2z\mathcal T_z+6s\bigr)
-2\psi''z^2s+3s\\
&\quad
-8s(2+z\psi')
\left(
2\psi'
+
\frac{r+5}{r+13}\frac{z\psi''}{\psi'}
\right)
-\mathcal K_z(s,s).
\end{align*}
Since the coefficient of
\[
\psi'+\sqrt{(\psi')^2+\psi\psi''}
\]
in the expression for $\mathcal Q_{16}(s,s)$ is nonpositive,
\eqref{eq:strong-square-root-bound} gives
\[
\mathcal Q_{16}(s,s)
\le
\mathcal F(r,\psi).
\]

As a function of $z$, $\mathcal F(r,z)$ is strictly convex. Indeed,
direct differentiation gives
\[
\frac12\frac{\partial^2\mathcal F}{\partial z^2}
=
1-4\psi'
+
\frac{8(\psi')^2}{3\psi'+4s\psi''}
-
\left(
2+\frac{8(r+5)}{r+13}
\right)s\psi''.
\]
By Eq.~(3.6),
\[
3\psi'+4s\psi''
=
\psi'+2(\psi'+2s\psi'')>0,
\]
while $\psi''\le0$. Moreover,
\[
0<\psi'
=
\frac{(r-1)^2}{2r^3}
\le
\frac2{27}
\qquad (r\ge3).
\]
Therefore
\[
\frac12\frac{\partial^2\mathcal F}{\partial z^2}
\ge
1-4\psi'
\ge
\frac{19}{27}>0.
\]
Therefore $\mathcal F(r,\cdot)$ is convex in its second variable.
By \eqref{eq:psi-upper-hopf}, $0\le\psi\le z_*(r).$
Since a convex function on an interval attains its maximum at an endpoint,
\[
\mathcal Q_{16}(s,s)
\le
\mathcal F(r,\psi)
\le
\max
\left\{
\mathcal F(r,0),
\mathcal F\bigl(r,z_*(r)\bigr)
\right\}.
\]

It remains to check these two endpoint values. First,
\[
\mathcal F(r,0)
=
-\frac{\mathcal R_0(r)}
{2r^3(r-1)^2(r^2+4r+6)},
\]
where
\[
\begin{aligned}
\mathcal R_0(r)
={}&
r^6\left[
2(r-3)^3+9(r-3)^2+168(r-3)+27
\right]\\
&+
r^3\left(336r^2+80r-872\right)
+(1770r^2-1490r+447).
\end{aligned}
\]
All three brackets are positive for $r\ge3$. Hence $\mathcal F(r,0)<0.$

For the other endpoint,
\[
\mathcal F\bigl(r,z_*(r)\bigr)
=
-\frac{\mathcal R_1(r)}
{2r^7(r-1)^2(r+2)^2(r+13)(r^2+4r+6)},
\]
where
\[
\begin{aligned}
\mathcal R_1(r)
={}&
r^{12}\Bigl[
2(r-3)(r+13)(r-13)^2
+82(r-8)^2+43r+611
\Bigr]\\
&+
r^9\left(16134r^2-52650r+12874\right)\\
&+
r^6\left(93776r^2-13122r-191708\right)\\
&+
r^3\left(148835r^2+103537r-225518\right)\\
&+
147310r^2-43216r+4872.
\end{aligned}
\]
The first bracket is manifestly positive. Each of the remaining
quadratic polynomials is increasing on $[3,\infty)$ and is positive at
$r=3$; for example,
\[
16134r^2-52650r+12874
=
16134(r-3)^2+44154(r-3)+130>0.
\]
Thus
\[
\mathcal R_1(r)>0
\qquad (r\ge3),
\]
and hence
\[
\mathcal F\bigl(r,z_*(r)\bigr)<0.
\]

We conclude that
\[
\mathcal Q_{16}(s,s)<0
\qquad (r\ge3).
\]

\end{enumerate}
The three vertex inequalities follow.
\end{proof}

\printbibliography
\end{document}